\def\url@leostyle{%
  \@ifundefined{selectfont}{\def\UrlFont{\sf}}{\def\UrlFont{\small\ttfamily}}}
\newtheorem{thm}{Theorem}[section]
\newtheorem{prop}[thm]{Proposition}
\newtheorem{lemma}[thm]{Lemma}
\newtheorem{cor}[thm]{Corollary}
\theoremstyle{definition}
\newtheorem{defn}[thm]{Definition}
\newtheorem{rem}[thm]{Remark}
\newcommand{\Ker}{\operatorname{Ker}}
\newcommand{\ldot}{\textbf{.}}
\newcommand{\triv}{\mathrm{triv}}
\newcommand{\rank}{\operatorname{rank}}
\newcommand{\mf}{\mathfrak}
\newcommand{\Z}{\mathbb{Z}}
\newcommand{\F}{\mathbb{F}}
\newcommand{\h}{\mf{h}}
\begin{document}

\author{Martina Balagovi\' c and Harrison Chen}
\address{
M.B: Department of Mathematics, University of York, York, YO10 5DD, UK, and \\
Department of Mathematics, University of Zagreb, Bijeni\v{c}ka 30, 10000 Zagreb, Croatia\\
H.C: Department of Mathematics, University of California, 852 Evans Hall, Berkeley, CA 94720 USA}
\email{martina.balagovic@york.ac.uk, chenhi@math.berkeley.edu}
\title{Category $\mathcal{O}$ for Rational Cherednik Algebras $H_{t,c}(GL_2(\F_p),\h)$ in Characteristic $p$}

\begin{abstract}
In this paper we describe the characters of irreducible objects in category $\mathcal{O}$ for the rational Cherednik algebra associated to $GL_2(\F_p)$ over an algebraically closed field of positive characteristic $p$, for any value of the parameter $t$ and generic value of the parameter $c$.
\end{abstract}

\maketitle

\section{Introduction}

This paper is a sequel to \cite{BalChen}, and continues the study of rational Cherednik algebras over algebraically closed fields of positive characteristic. 

Let $p$ be an odd prime, $\Bbbk$ an algebraically closed field of characteristic $p$, $\F_p\subset \Bbbk$ the subfield of $p$ elements, $G=GL_2(\F_p)$ the general linear group over $\F_p$, $\h=\Bbbk^2$ the tautological column vector representation of $G$, $\h_{\F}=\F_p^2$ its $\F_p$-form, and $\h^*$ the dual representation. For $t\in \Bbbk$ a constant, and $c$ a collection of conjugation invariant parameters in $\Bbbk$ labeled by reflections in $G$,  the \emph{rational Cherednik algebra} $H_{t,c}(G,\h)$ is a non-commutative, associative, infinite dimensional algebra deforming the semidirect product of the group algebra $\Bbbk[G]$ and the symmetric algebra $S(\h^{*} \oplus \h)$. Algebras of this type for various reflection groups $G$, have been extensively studied since the early 1990s, mostly over fields of characteristic zero.

We first repeat some general results about rational Cherednik algebras (which can be found in \cite{etingof-ma}), some results specific for finite characteristic (which can be found in \cite{BalChen}),
and some results about the structure of the category of finite dimensional representations of $GL_2(\F_p)$ (which can be found in \cite{brauer}, \cite{dickson} and \cite{glover1}). 
The main theorem, Theorem \ref{mainII}, calculates 
the characters of irreducible $H_{t,c}(GL_2(\F_p),\h)$- modules $L_{t,c}(\tau)$, for any $t$, generic $c$ and all $\tau$. 

The roadmap of this paper is as follows. Section 2 contains definitions and basic properties of rational Cherednik algebras, their representation theory, 
and category $\mathcal{O}$. We state most results without proof and stress the differences between the characteristic zero and characteristic $p$ cases. Section 3 contains
properties of the group $GL_2(\F_p)$ and its representations.
Section 4 contains some preliminary observations and calculations about the category $\mathcal{O}$ for $H_{t,c}(GL_2(\F_p),\h)$, including a reduction which aids in our character computations.
Finally, Sections 5-7 contain the character calculations for irreducible representations associated to representations of $GL_2(\F_p)$.

\section{Rational Cherednik algebras and their representations}\label{definitions}

\subsection{Notation}
Let $\Bbbk$ be an algebraically closed field, $\h$ a $\Bbbk$-vector space, and $G\subseteq GL(\h)$ a finite group generated by the set $S$ of reflections in $G$. An element $s\in G$ is a reflection if $\mathrm{rank}_{\h}(1-s) = 1$. Let $\h^*$ be the dual representation, and $(\cdot,\cdot)$ the canonical pairing $\h \otimes \h^* \rightarrow \Bbbk$ or $\h^* \otimes \h \rightarrow \Bbbk$.  Let $\alpha_s\in \h^*$ be a nonzero element of the image of $1-s$ on $\h^*$. To this data one can associate a rational Cherednik algebra. In this paper, we are interested in the case when $\Bbbk$ is an algebraically closed field of characteristic $p$, $G=GL_2(\F_p)$, and $\h=\Bbbk^2$ is its vector representation. 

For a $\Bbbk$-vector space $V$, let $TV$ and $SV$ denote the tensor and symmetric algebra of $V$ over $\Bbbk$, and $S^iV$ the homogeneous subspace of $SV$ of degree $i$. For a graded vector space $M$, let $M_i$ denote the $i$-th graded piece, $M_+$ the subspace of positive graded degrees, and $M[j]$ the same vector space with the grading shifted by $j$, meaning $M[j]_i = M_{i+j}$.

\subsection{Rational Cherednik algebras} The following definitions and results are standard and can be found in \cite{etingof-ma}.

\begin{defn}Let $t\in \Bbbk$, and $s\mapsto c_s$ be a $\Bbbk$-valued function on the set $S$ of reflections in $G$, satisfying $c_{s}=c_{gsg^{-1}}$ for any $g\in G$. 
The \emph{rational Cherednik algebra} $H_{t,c}(G, \h)$ is the quotient of the semidirect product $\Bbbk[G] \ltimes T(\h \oplus \h^*)$ by the ideal generated by relations:
$$[x, x'] = 0, \,\, [y, y'] = 0, \,\, [y, x] = (y, x)t - \sum_{s \in S} c_s ((1 - s)\ldot x, y) s,$$
for all $x,x'\in \h^*,y,y'\in \h.$
\end{defn}

For $g \in G$ and $y \in \h$, we use notation  $gy$ for multiplication in the algebra, and  $g \ldot y$ for the action from the representation; they are related by $gyg^{-1} = g \ldot y$.

\begin{thm} \label{pbw}
\begin{enumerate}
\item The multiplication map $S\h^*\otimes \Bbbk[G]\otimes S\h\to H_{t,c}(G, \h)$ is an isomorphism. (PBW, \cite{griffeth}, Theorem 2.1. ) 
\item $H_{t,c}(G, \h)$ is graded  by $\deg x=1, \deg y=-1, \deg w=0$ for $x\in \h^*, y\in h, w\in G$.
\item For any $a\in \Bbbk^{\times}$ there exists an isomorphism of algebras $H_{t,c}(G, \h)\cong H_{at,ac}(G, \h)$.
\end{enumerate}
\end{thm}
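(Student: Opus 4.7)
The plan is to dispatch (2) and (3) as formalities and concentrate the work on (1).

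For (1), I would follow the standard Diamond-Lemma approach, as carried out by Griffeth for general complex reflection groups. The defining relations together with the semidirect-product relations $gy = (g\ldot y)g$ and $xg = g(g^{-1}\ldot x)$ give a rewriting system that reduces every word in $\Bbbk[G]\ltimes T(\h \oplus \h^*)$ to the normal form $x^\alpha g\, y^\beta$, so surjectivity of the multiplication map is immediate; the content of PBW is linear independence of these monomials. For this, the standard trick is to construct a faithful action of $H_{t,c}(G,\h)$ on $S\h^* \otimes \Bbbk[G]$, with $\h^*$ acting by multiplication, $G$ acting in the natural way, and $y\in\h$ acting by the Dunkl operator
\[
D_y \;=\; t\,\partial_y \;-\; \sum_{s\in S} c_s\,(\alpha_s,y)\,\frac{1-s}{\alpha_s},
\]
where $(1-s)/\alpha_s$ is the divided-difference operator on $S\h^*$, which is integral because $\alpha_s$ divides $(1-s)f$ for every $f$. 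Once one checks $[D_y, x] = (y,x)t - \sum_s c_s((1-s)\ldot x, y)s$ and $[D_{y'}, D_y] = 0$, the candidate PBW monomials act freely on $1\in S\h^* \otimes \Bbbk[G]$, forcing linear independence. The key point specific to characteristic $p$ is precisely that $(1-s)/\alpha_s$ remains integral without introducing denominators, so Griffeth's verification survives verbatim.

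Part (2) is routine: with $\deg x = 1$, $\deg y = -1$, $\deg g = 0$, the relations $[x,x']=0$, $[y,y']=0$, and $[y,x] = (y,x)t - \sum_s c_s((1-s)\ldot x, y)s$ are homogeneous of degrees $2$, $-2$, and $0$ respectively, so the defining ideal is graded and the grading descends to the quotient.

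For (3), I would define $\phi\colon H_{t,c}(G,\h) \to H_{at,ac}(G,\h)$ by $\phi(x) = x$, $\phi(y) = a^{-1} y$, $\phi(g) = g$. The commutator relation transports correctly because
\[
[\phi(y), \phi(x)] \;=\; a^{-1}[y,x] \;=\; a^{-1}\Bigl((y,x)(at) - \sum_s (ac_s)((1-s)\ldot x, y)s\Bigr) \;=\; (y,x)t - \sum_s c_s((1-s)\ldot x, y)s,
\]
which is $\phi$ applied to the right-hand side of the $H_{t,c}$-relation; the remaining relations are $\Bbbk$-linear in $x,y$ and hence preserved. The evident inverse $y\mapsto ay$ makes $\phi$ an isomorphism.

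The main obstacle is the linear independence step in (1); parts (2) and (3) are bookkeeping. In characteristic zero one could alternatively deduce PBW from a flatness argument comparing the associated graded with $\Bbbk[G]\ltimes S(\h\oplus\h^*)$, but in positive characteristic one really wants the explicit Dunkl construction, since specialization and flatness arguments over $\Bbbk[t,c]$ are less clean.
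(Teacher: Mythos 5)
The paper does not actually prove this theorem: it is quoted as standard, with part (1) attributed to Griffeth's Theorem 2.1, so the comparison here is between your sketch and the literature rather than an argument in the text. Your parts (2) and (3) are correct and complete: the three defining relations are homogeneous of degrees $2$, $-2$ and $0$ for the stated grading, and the rescaling $y\mapsto a^{-1}y$, $x\mapsto x$, $g\mapsto g$ transports the commutator relation of $H_{at,ac}$ to that of $H_{t,c}$ exactly as you compute.

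Part (1), however, has a genuine gap at the step you yourself identify as the crux. The Dunkl representation on $S\h^*\otimes\Bbbk[G]$ does not establish linear independence of the PBW monomials, for two separate reasons. First, every monomial $x^\alpha g\, y^\beta$ with $\beta\neq 0$ annihilates $1\otimes e$, since $D_y$ kills constants ($\partial_y 1=0$ and $(1-s)\ldot 1=0$); so evaluation at $1$ only detects the factor $S\h^*\otimes\Bbbk[G]$ and says nothing about independence in the $S\h$ direction. Second, and specific to characteristic $p$ --- the setting of this paper --- the representation is not faithful even after upgrading "acts freely on $1$" to a filtration argument: already for $G$ trivial, $c=0$, $t=1$ the algebra is a Weyl algebra, and $\partial_y^{\,p}$ acts as zero on polynomials while $y^p\neq 0$ in the algebra. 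So the characteristic-$p$ subtlety is not, as you suggest, merely the integrality of the divided-difference operators $(1-s)/\alpha_s$; it is that the embedding into operators on polynomials genuinely fails to be injective. The arguments that do survive in characteristic $p$ are precisely the ones you set aside: the Diamond Lemma overlap check (resolving the ambiguities $yy'x$ and $yxx'$, which uses the conjugation-invariance of $c$), or a flatness/specialization argument from a PBW theorem established over $\mathbb{Z}[t,c_s]$. Either of these would repair the proof; the faithful-polynomial-representation shortcut, as written, does not.
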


We can assume without loss of generality that $t=0$ or $t=1$. These two cases behave differently, and we study them both.

\subsection{Verma Modules $M_{t,c}(\tau)$ and Dunkl operators} \label{vermaintro}

\begin{defn}
Let $\tau$ be an irreducible finite dimensional representation of $G$. Define a $\Bbbk[G] \ltimes S\h$-module structure on it by requiring  $\h$-action on $\tau$ to be zero. The \emph{Verma module} is the induced $H_{t,c}(G, \h)$-module  $$M_{t,c}(G) = H_{t,c}(G, \h) \otimes_{\Bbbk[G] \ltimes S\h} \tau.$$ 
\end{defn}

\begin{lemma} \label{homexist}
\begin{enumerate}
\item Let $M$ be an $H_{t,c}(G, \h)$-module and $\tau \subset M$ a $G$-submodule such that $\h\subseteq H_{t,c}(G, \h)$ acts on $\tau$ as zero. Then there is a unique $H_{t,c}(G, \h)$-homomorphism $\phi: M_{t,c}(\tau) \rightarrow M$ such that $\phi|_\tau$ is the identity.
\item As a vector space, $M_{t,c}(\tau)\cong S\h^* \otimes \tau$. It is a graded $H_{t,c}(G, \h)$-representation, with the grading given by degrees of $ S\h^*$. Each graded piece of $M_{t,c}(\tau)$ is a finite dimensional $G$-representation.
\item The action of the generators of $H_{t,c}(G,\h)$ on the Verma module is given by:
$$x \ldot (f \otimes v) = (xf) \otimes v$$
$$g \ldot (f \otimes v) = g\ldot f \otimes g \ldot v$$
$$y \ldot (f\otimes v) =D_y(f \otimes v)$$
for  $f \otimes v \in S\h^* \otimes \tau , x\in \h^*,  g\in G, y\in \h$, and $$D_{y}=t \partial_y \otimes 1 - \sum_{s \in S} c_s \frac{(y, \alpha_s)}{\alpha_s} (1 - s) \otimes s$$ the Dunkl operator.
\end{enumerate}
\end{lemma}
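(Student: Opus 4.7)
The plan is to derive each part from the PBW theorem together with the commutation relations defining $H_{t,c}$.

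For (1), I would invoke the universal property of the induction functor $H_{t,c}(G,\h) \otimes_{\Bbbk[G]\ltimes S\h} (-)$. The hypothesis that $\h \subset H_{t,c}(G,\h)$ annihilates $\tau \subset M$ makes $\tau$ into a $\Bbbk[G] \ltimes S\h$-submodule of $M$ on which $S\h_+$ acts as zero, and the inclusion $\iota : \tau \hookrightarrow M$ is then a $\Bbbk[G] \ltimes S\h$-homomorphism. Standard induction-restriction adjunction produces the unique $H_{t,c}(G,\h)$-linear extension $\phi : M_{t,c}(\tau) \to M$ with $\phi|_\tau = \iota$.

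For (2), the PBW theorem (Theorem \ref{pbw}(1)) identifies $H_{t,c}(G,\h) \cong S\h^* \otimes \Bbbk[G] \otimes S\h$ as vector spaces, and in fact as right $\Bbbk[G] \ltimes S\h$-modules via the action on the two rightmost factors. Tensoring with $\tau$ over $\Bbbk[G] \ltimes S\h$ and using $S\h_+ \ldot \tau = 0$ collapses the right factor to $\Bbbk[G] \otimes_{\Bbbk[G]} \tau = \tau$, giving $M_{t,c}(\tau) \cong S\h^* \otimes \tau$ as vector spaces. The grading from Theorem \ref{pbw}(2), with $\tau$ placed in degree $0$, induces a grading whose $i$-th piece is $S^i\h^* \otimes \tau$, which is finite-dimensional as a $\Bbbk$-vector space and hence as a $G$-representation.

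For (3), the formula $x \ldot (f \otimes v) = xf \otimes v$ is immediate from the structure of the induced module. The formula for $g$ follows from the semidirect product relation $gfg^{-1} = g \ldot f$: we have $g(f \otimes v) = gfg^{-1} \cdot g \otimes v = (g \ldot f) \otimes (g \ldot v)$ since $g$ passes across the tensor via the $\Bbbk[G]$-action. The action of $y \in \h$ is the main content. I would argue by induction on the degree of $f$. For $f = 1$, $y \ldot (1 \otimes v) = y \otimes v = 1 \otimes (y \ldot v) = 0$, matching $D_y(1 \otimes v) = 0$. For the inductive step, write $f = xh$ with $x \in \h^*$ and compute
$$y \ldot (xh \otimes v) = xy \ldot (h \otimes v) + [y,x] \ldot (h \otimes v);$$
the first term equals $x \cdot D_y(h \otimes v)$ by induction, while the second is expanded via the defining commutation relation for $[y,x]$. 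Matching the outcome with $D_y(xh \otimes v)$ requires the Leibniz rule $\partial_y(xh) = (y,x)h + x\partial_y(h)$ together with the decomposition
$$(1-s)\ldot(xh) = x \cdot (1-s)\ldot h + ((1-s)\ldot x)\cdot (s\ldot h),$$
and the identity $\tfrac{(y,\alpha_s)}{\alpha_s}(1-s)\ldot x = ((1-s)\ldot x, y)$, which holds because $(1-s)\ldot x$ is a scalar multiple of $\alpha_s$. The main obstacle is this piece of bookkeeping, after which all terms line up. Since the ordinary partial derivative $\partial_y$ on $S\h^*$ is well-defined and satisfies the Leibniz rule in any characteristic, the argument proceeds identically in characteristic $p$.
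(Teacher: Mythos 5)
Your proof is correct, and the paper in fact states this lemma without proof (it is a standard fact cited from the Etingof--Ma notes); your argument --- adjunction for part (1), PBW for part (2), and induction on $\deg f$ using the commutator relation together with the twisted Leibniz rule $(1-s)\ldot(xh) = x\,(1-s)\ldot h + ((1-s)\ldot x)(s\ldot h)$ and the observation that $(1-s)\ldot x \in \Bbbk\alpha_s$ for part (3) --- is precisely the standard one, valid in any characteristic.
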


We say a homogeneous element $v\in M_{t,c}(\tau)$ is \emph{singular} if $D_yv=0$ for all $y\in \h$. Any such element of positive degree generates a proper $H_{t,c}(G,\h)$ submodule. By Lemma \ref{homexist}, this submodule is isomorphic to a quotient of $M_{t,c}(G\ldot v)$. We want to describe irreducible quotients of Verma modules. 

\subsection{Contravariant Form $B$}\label{formdef}

Define $\bar{c}:S\to \Bbbk$ as $\bar{c}_s = c_{s^{-1}}$. 

\begin{prop}\label{bform}
There exists a unique  form $B:M_{t,c}(\tau) \times M_{t,\bar{c}}(\tau^*) \rightarrow \Bbbk$ satisfying: \begin{itemize}
\item[a)] Contravariance: for $f \in M_{t,c}(\tau)$, $h \in M_{t,\bar{c}}(\tau^*)$, $g\in G, x \in \h^* , y \in \h$,
$$B(g \ldot f, g \ldot h) = B(f, h), B(xf, h) = B(f, D_{x}(h)), B(f, yh) = B(D_{y}(f), h).$$
\item[b)] In the zeroth degree $M_{t,c}(\tau)_0 \otimes M_{t,\bar{c}}(\tau^*)_0$, $B$ is the canonical pairing  $\tau \otimes \tau^*\to \Bbbk$. 
\end{itemize}
Additionally,
\begin{itemize}
\item[c)] $B: M_{t,c}(\tau)_i \otimes M_{t,\bar{c}}(\tau^*)_j$ is the zero map for $i\ne j$.
\item[d)] $B(f,-)=0$ if $f$ is a singular vector of positive degree.
\item[e)] $\Ker B=\{ f \in  M_{t,c}(\tau) : B(f,-)=0\}$ is the maximal proper graded submodule of $ M_{t,c}(\tau)$.
\end{itemize}
\end{prop}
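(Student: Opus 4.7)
The plan is to establish uniqueness and property (c) simultaneously by a degree-induction argument, construct $B$ explicitly by the resulting formula, and then deduce (d) and (e) from the construction.

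For uniqueness, any $f \in M_{t,c}(\tau)_n$ is a sum of elements of the form $x_1 \cdots x_n \otimes v$ with $x_i \in \h^*$ and $v \in \tau$. Iterating the contravariance relation $B(xf', h) = B(f', D_x h)$ gives
\[ B(x_1 \cdots x_n \otimes v,\, h) \;=\; B(1 \otimes v,\, D_{x_n} \cdots D_{x_1} h). \]
Since each $D_{x_i}$ lowers the degree of $h$ by one, the right side vanishes when $\deg h < n$, which gives (c); when $\deg h = n$ it reduces to the canonical pairing on $\tau \otimes \tau^*$ fixed by (b). Hence $B$ is uniquely determined.

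For existence I would take the formula above as a definition. The essential well-definedness check is that, since $S\h^*$ is commutative, different orderings of the $x_i$ must yield the same value; this amounts to $[D_x, D_{x'}] = 0$ on $M_{t,\bar c}(\tau^*)$, which is the Cherednik relation $[x, x'] = 0$ realized on that module. Axiom (a) for $G$ then follows by induction from the equivariance $g D_x g^{-1} = D_{g \ldot x}$. I expect the main obstacle to be verification of the second contravariance relation $B(f, yh) = B(D_y f, h)$. Writing $f = x f'$ and unwinding,
\[ B(xf',\, yh) \;=\; B(f',\, yD_x h) + B(f',\, [D_x, y] h), \]
while on the other side $B(D_y(xf'), h) = B(D_y f',\, D_x h) + B([D_y, x] f',\, h)$. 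The inductive hypothesis matches the first summands; matching the commutator terms uses the Cherednik relation $[D_y, x] = t(y,x) - \sum_s c_s ((1-s)\ldot x, y)\, s$ on $M_{t,c}(\tau)$, its counterpart with parameter $\bar c$ on $M_{t,\bar c}(\tau^*)$, the identity $\bar c_s = c_{s^{-1}}$, and the $G$-invariance already proven. The delicate point is to track how the swap between the two Verma modules interacts with the parameters and the reflections in a compatible way.

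Properties (d) and (e) are then short. For (d), if $f$ is singular of positive degree then $D_y f = 0$ for all $y \in \h$; writing $h$ of positive degree as a sum of $y$-translates, axiom (a) gives $B(f, yh) = B(D_y f, h) = 0$, and for $h$ of degree zero (c) already yields zero, so $B(f, -) \equiv 0$. For (e), contravariance makes $\Ker B$ a graded submodule, and non-degeneracy of the degree-zero pairing makes it proper. Conversely, any proper graded submodule $N \subseteq M_{t,c}(\tau)$ intersects $\tau$ trivially, because $\tau$ is $G$-irreducible and generates all of $M_{t,c}(\tau)$; thus for $f \in N$ of degree $n$ and any $h \in M_{t,\bar c}(\tau^*)_n$, iterating (a) rewrites $B(f, h)$ as the canonical pairing of an element of $N \cap \tau = 0$ against an element of $\tau^*$, hence zero. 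This gives $N \subseteq \Ker B$, proving maximality.
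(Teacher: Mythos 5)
The paper does not prove this proposition at all --- it is quoted as standard from the Etingof--Ma lectures --- so there is no in-paper argument to compare against; your construction is the standard one and its skeleton is sound. Two points deserve attention. First, a notational issue you silently (and correctly) resolve: for the relations $B(xf,h)=B(f,D_x(h))$ and $B(f,yh)=B(D_y(f),h)$ to make sense, $M_{t,\bar c}(\tau^*)$ must be the \emph{opposite} Verma module $S\h\otimes\tau^*$ (with $\h^*$ acting by zero on $\tau^*$, $y\in\h$ acting by multiplication, and $x\in\h^*$ by a Dunkl operator), not $S\h^*\otimes\tau^*$ as the paper's literal definition of Verma modules would give; you should say this explicitly, since your whole degree-counting argument depends on it. Second, the one step you leave as an expectation --- matching $B(f',[D_x,y]h)$ with $B([D_y,x]f',h)$ --- is genuinely the crux of existence, and it does close: the scalar terms agree since $(y,x)=(x,y)$, and for the reflection terms one uses $B(f',sh)=B(s^{-1}f',h)$ to move $s$ to the left slot, reindexes the sum by $s\mapsto s^{-1}$, and then invokes the duality identity $((1-s)\ldot x,\,y)=((1-s^{-1})\ldot y,\,x)$ (immediate from $(s\ldot x,y)=(x,s^{-1}\ldot y)$) together with $\bar c_s=c_{s^{-1}}$; this is precisely why the form pairs $M_{t,c}$ with $M_{t,\bar c}$ rather than $M_{t,c}$. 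Minor further gaps: you only argue (c) for $\deg h<\deg f$ --- for $\deg h>\deg f$ note that $D_y(1\otimes v)=0$, so peeling off a $y$ from $h$ kills the pairing --- and your uniqueness/well-definedness reduction should record that the assignment $(x_1,\dots,x_n,v)\mapsto B(1\otimes v,D_{x_n}\cdots D_{x_1}h)$ is multilinear and, once $[D_x,D_{x'}]=0$ is known, symmetric, hence factors through $S^n\h^*\otimes\tau$. With these additions your argument is complete; parts (d) and (e) are correct as written.
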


Let $B_i: M_{t,c}(\tau)_i \otimes M_{t,\bar{c}}(\tau^*)_i$ be the restriction of $B$ to the $i$-th graded piece. In the initial stages of this project, we calculated the matrices of $B_i$ using the MAGMA algebra software \cite{magma} to find singular vectors in the Verma module. 

\subsection{Baby Verma modules $N_{t,c}(\tau)$}
 From now on, $\Bbbk$ is algebraically closed of finite characteristic $p$. We shall state some definitions and results specific to finite characteristic. Most of these can be found in \cite{BalChen}.

As usual in characteristic $p$, the algebra $H_{t,c}(G,\h)$ has a large center, and $M_{t,c}(\tau)$ has a large graded submodule. For any $G$-representation $V$, let $V^G$ denote the subspace of $G$-invariants. 
If  $t=0$, then $(S\h^*)^G\oplus (S\h)^G$ is a central subalgebra, and $((S\h^*)^G)_+ M_{0,c}(\tau)$ is a proper graded submodule of $M_{0,c}(\tau)$. If $t\ne 0$, then  $((S\h^*)^G)^p\oplus ((S\h)^G)^p$ is central and $((S\h^*)^G)^p_+ M_{t,c}(\tau)$ is a proper graded submodule of $M_{t,c}(\tau)$. 

\begin{defn}
For $t\ne 0$, the \emph{baby Verma module} $N_{t,c}(\tau)$ for the algebra $H_{t,c}(G,\h)$ is the quotient $$N_{t,c}(\tau)= M_{t,c}(\tau)/((S\h^*)^G)^p_+ M_{t,c}(\tau).$$ 

For $t= 0$, the \emph{baby Verma module} $N_{0,c}(\tau)$ for the algebra $H_{0,c}(G,\h)$ is the quotient $$N_{0,c}(\tau)= M_{0,c}(\tau)/((S\h^*)^G)_+ M_{0,c}(\tau).$$ 
\end{defn}

\begin{prop}
$N_{t,c}(\tau)$ is graded and finite dimensional. The form $B$ descends to it, and the kernel of the induced form $B_N$ on $N_{t,c}(\tau)$ is the unique maximal proper submodule of $N_{t,c}(\tau)$. 
\end{prop}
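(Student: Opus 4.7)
The proposition collects four assertions: that $N_{t,c}(\tau)$ is graded, that it is finite-dimensional, that $B$ descends to a form $B_N$ on $N_{t,c}(\tau) \times N_{t,\bar{c}}(\tau^*)$, and that $\Ker B_N$ is the unique maximal proper submodule. The plan is to address these in order, focusing on the case $t \neq 0$; the $t = 0$ case is parallel, with $(S\h^*)^G_+$ in place of $((S\h^*)^G)^p_+$.

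The grading is immediate since $((S\h^*)^G)^p_+ M_{t,c}(\tau)$ is generated by homogeneous elements. Finite-dimensionality will follow from a standard Noether-type argument: since $G$ is finite, each $f \in S\h^*$ is a root of $\prod_{g \in G}(T - g \cdot f) \in (S\h^*)^G[T]$, so $S\h^*$ is a finite $(S\h^*)^G$-module; and each $Q \in (S\h^*)^G$ is a root of $T^p - Q^p \in ((S\h^*)^G)^p[T]$, so $(S\h^*)^G$ is a finite $((S\h^*)^G)^p$-module. Composing these, $S\h^*/((S\h^*)^G)^p_+ S\h^*$ is finite-dimensional, and $N_{t,c}(\tau)$ is isomorphic to the tensor product of this quotient with $\tau$ as a graded vector space.

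The main technical step is the descent of $B$. Iterating the contravariance in Proposition \ref{bform}, one has $B(Pf, h) = B(f, P(D)h)$ for $P \in S\h^*$, where $P(D)$ denotes the polynomial $P$ evaluated on the pairwise-commuting Dunkl-type operators $\{D_x : x \in \h^*\}$ acting on $M_{t,\bar{c}}(\tau^*)$. For $P = Q^p$ with $Q \in (S\h^*)^G_+$, the $G$-invariance of $Q$ makes the reflection summands in the operators $D_x$ cancel in $Q(D)$, reducing it to a differential-operator expression; raising to the $p$-th power in characteristic $p$ then forces $P(D)h$ to lie in the subspace of $M_{t,\bar{c}}(\tau^*)$ that is quotiented out in $N_{t,\bar{c}}(\tau^*)$. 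Running the symmetric computation on the other factor will yield that $B$ descends to a well-defined pairing $B_N$ on the baby Verma modules. I anticipate that making this bookkeeping precise is the principal obstacle, as it is where the positive-characteristic invariant theory enters essentially.

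For the final assertion, $\Ker B_N$ is a submodule by contravariance and is proper because $B_N$ restricts in degree zero to the canonical nondegenerate pairing $\tau \otimes \tau^* \to \Bbbk$. Its preimage in $M_{t,c}(\tau)$ is $\Ker B$, which by Proposition \ref{bform}(e) is the maximal proper graded submodule of $M_{t,c}(\tau)$; hence any proper graded submodule of $N_{t,c}(\tau)$ lifts to a proper graded submodule of $M_{t,c}(\tau)$ containing the defining ideal, is therefore contained in $\Ker B$, and descends inside $\Ker B_N$. A standard argument using the grading element of $H_{t,c}(G,\h)$ then identifies $\Ker B_N$ as the unique maximal proper submodule.
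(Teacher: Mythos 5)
The paper itself does not prove this proposition; it is quoted from \cite{BalChen}. Your treatment of the grading (the defining submodule is generated by homogeneous elements) and of finite-dimensionality (Noether finiteness of $S\h^*$ over $(S\h^*)^G$, composed with finiteness of $(S\h^*)^G$ over its subring of $p$-th powers via $T^p-Q^p$) is correct, and your reduction of the last assertion to Proposition \ref{bform}(e) is fine for \emph{graded} submodules.

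The genuine gap is in the descent step, which you rightly identify as the crux but resolve with a mechanism that does not close. After writing $B(Q^pf,h)=B(f,Q^p(D)h)$, you conclude that $Q^p(D)h$ ``lies in the subspace of $M_{t,\bar c}(\tau^*)$ that is quotiented out in $N_{t,\bar c}(\tau^*)$.'' Even granting that, it does not give $B(f,Q^p(D)h)=0$: to deduce that you would need to already know that $B$ vanishes against that subspace, which is precisely the other half of the descent you are proving. The argument is circular as stated. The correct statement, and the one to prove, is that $Q^p(D)$ acts by \emph{zero} on $M_{t,\bar c}(\tau^*)$. This is exactly what the centrality assertion recorded just before the definition of $N_{t,c}(\tau)$ buys you: for $t\ne 0$ the subalgebra $((S\h^*)^G)^p\oplus((S\h)^G)^p$ is central. (Your ``reflection summands cancel for invariant $Q$'' is the $t=0$ phenomenon, which makes $(S\h^*)^G\oplus(S\h)^G$ itself central; for general $t$ one uses $\operatorname{ad}(a^p)=\operatorname{ad}(a)^p$ in characteristic $p$ together with $\operatorname{ad}(q)^2(x)=0$ for $q$ invariant.) A central element that strictly lowers degree annihilates the generating lowest-degree subspace $\tau^*$ and hence, by centrality, all of $M_{t,\bar c}(\tau^*)$; therefore $B(Q^pf,h)=0$ for all $h$, and symmetrically on the other factor, so $B$ descends. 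A secondary point: your closing appeal to ``a standard argument using the grading element'' to pass from graded to arbitrary submodules does not work in characteristic $p$, since $\mathbf h$ acts on degree $m$ by $h_c(\tau)+tm\in\Bbbk$ and its eigenvalues repeat every $p$ degrees, so submodules need not be graded for that reason; one must instead invoke the general theory of graded algebras with triangular decomposition (or be content with the graded statement, which is all the paper uses).
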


Baby Verma modules can be thought of as Verma modules for the restricted rational Cherednik algebra $H_{t,c}(G,\h)/((S\h^*)^G)^p\oplus ((S\h)^G)^p$. All irreducible quotients of Verma modules factor through baby Verma modules. They are sometimes more convenient to work with than Verma modules.

\subsection{Irreducible modules $L_{t,c}(\tau)$ and category $\mathcal{O}$}

Let $J_{t,c}(\tau)=\Ker B\subseteq M_{t,c}(\tau)$. 
\begin{prop}
For any $\tau$, the module $$L_{t,c}(\tau)=M_{t,c}(\tau)/J_{t,c}(\tau)\cong M_{t,c}(\tau)/\ker B_N$$
is an irreducible, graded, finite dimensional $H_{t,c}(G,\h)$-module. 
\end{prop}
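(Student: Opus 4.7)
The plan is to deduce all three assertions—irreducibility, finite dimensionality, and the graded structure—by passing to the baby Verma module, so that the bulk of the work is handled by the two preceding propositions. First I would verify the displayed isomorphism. By Proposition \ref{bform}(d), every singular vector of positive degree lies in $\ker B$; in particular, the generators of the central subalgebra $((S\h^*)^G)^p_+$ (or $((S\h^*)^G)_+$ if $t=0$) act through invariants, and the prior statement that $B$ descends to the baby Verma module says precisely that
$$((S\h^*)^G)^p_+ \cdot M_{t,c}(\tau) \;\subseteq\; \ker B \;=\; J_{t,c}(\tau)$$
(with the appropriate modification for $t = 0$). Letting $\pi : M_{t,c}(\tau) \to N_{t,c}(\tau)$ be the projection, the subspace $\pi(J_{t,c}(\tau))$ equals $\ker B_N$ and its preimage recovers $J_{t,c}(\tau)$, so the third isomorphism theorem delivers
$$L_{t,c}(\tau) \;=\; M_{t,c}(\tau)/J_{t,c}(\tau) \;\cong\; N_{t,c}(\tau)/\ker B_N.$$

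Next I would read off the remaining properties from the two presentations. Finite dimensionality is immediate from the right-hand side, since the preceding proposition asserts that $N_{t,c}(\tau)$ is finite dimensional. For irreducibility I would invoke the same proposition, which states that $\ker B_N$ is the unique maximal proper submodule of $N_{t,c}(\tau)$; by the submodule correspondence for quotients, $L_{t,c}(\tau) \cong N_{t,c}(\tau)/\ker B_N$ admits no nontrivial proper $H_{t,c}(G,\h)$-submodule. The graded structure is inherited from the left-hand side: Proposition \ref{bform}(c) makes $B$ a graded form, so $J_{t,c}(\tau) = \ker B$ is a graded submodule of $M_{t,c}(\tau)$ (this is also part of Proposition \ref{bform}(e)), and hence the quotient $L_{t,c}(\tau)$ is naturally graded.

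There is no substantive obstacle in this proposition: the nontrivial content was already packaged into Proposition \ref{bform} (existence of the contravariant form with $\ker B$ maximal and graded) and into the preceding statement about $N_{t,c}(\tau)$ (finite dimensionality and maximality of $\ker B_N$). The one line deserving a moment of care is the identification $\pi^{-1}(\ker B_N) = \ker B$, which follows formally from the fact that $B$ factors through $\pi$ on the first argument and through the analogous projection on $M_{t,\bar c}(\tau^*)$ on the second, together with the nondegeneracy of the induced pairing $B_N$ modulo its kernel.
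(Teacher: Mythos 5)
Your argument is correct and is exactly the intended derivation: the paper states this proposition without proof, packaging all the content into Proposition \ref{bform} and the preceding proposition on $N_{t,c}(\tau)$, and your route (descend $B$ to $B_N$, identify $\pi^{-1}(\ker B_N)=\ker B=J_{t,c}(\tau)$, apply the third isomorphism theorem, and read off finite dimensionality and irreducibility from the properties of $N_{t,c}(\tau)$ and maximality of $\ker B_N$) is the standard one these statements are set up to support. You also correctly read the displayed formula as $N_{t,c}(\tau)/\ker B_N$ rather than the literal $M_{t,c}(\tau)/\ker B_N$, which is evidently a typo in the statement.
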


\begin{defn}
The category $\mathcal{O}=\mathcal{O}_{t,c}(G, \h)$ is the category of $\Z$-graded $H_{t,c}(G, \h)$-modules which are finite dimensional over $\Bbbk$.
\end{defn}

\begin{prop}
The irreducible objects in $\mathcal{O}$ are $L_{t,c}(\tau)[i]$, for all irreducible $G$-representations $\tau$ and all possible grading shifts $i\in \mathbb{Z}$.
\end{prop}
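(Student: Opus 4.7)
The plan is to prove both directions: first that each $L_{t,c}(\tau)[i]$ is irreducible in $\mathcal{O}$ (which is essentially the preceding proposition combined with the observation that a grading shift of an object of $\mathcal{O}$ still lies in $\mathcal{O}$ and preserves irreducibility), and then conversely, that every irreducible object of $\mathcal{O}$ arises this way.

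For the converse, let $L$ be an irreducible object of $\mathcal{O}$. Since $L$ is finite dimensional and $\mathbb{Z}$-graded, only finitely many graded pieces are nonzero, so there exists a smallest integer $j$ with $L_j \neq 0$. Because $\h \subset H_{t,c}(G,\h)$ has degree $-1$, it must annihilate $L_j$. Since $L_j$ is a nonzero finite-dimensional $\Bbbk[G]$-module, it contains an irreducible $G$-submodule $\tau$ (take any simple submodule of a composition series; this exists even though $\Bbbk[G]$ is not semisimple in characteristic $p$). Then $\tau \subset L$ is a $G$-submodule on which $\h$ acts as zero.

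By Lemma \ref{homexist}(1), there is a unique $H_{t,c}(G,\h)$-homomorphism $\phi\colon M_{t,c}(\tau) \to L[j]$ which is the identity on $\tau$ (where the shift $[j]$ places $\tau$ in degree $0$ on the target, matching the degree of $\tau$ in $M_{t,c}(\tau)$). The image of $\phi$ is a nonzero graded submodule of $L[j]$, so by irreducibility of $L[j]$ it is all of $L[j]$; hence $\phi$ is surjective. Its kernel $\ker\phi$ is a graded proper submodule of $M_{t,c}(\tau)$, and the quotient $M_{t,c}(\tau)/\ker\phi \cong L[j]$ is irreducible, so $\ker\phi$ must be the unique maximal proper graded submodule. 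By Proposition \ref{bform}(e), this maximal submodule is exactly $J_{t,c}(\tau) = \Ker B$, so $L[j] \cong L_{t,c}(\tau)$, i.e.\ $L \cong L_{t,c}(\tau)[-j]$.

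The main (and really only) subtlety is ensuring that a simple $G$-subrepresentation exists inside the lowest graded piece, since we are in modular characteristic; but this is immediate from the finite dimensionality of $L_j$ via a composition series. Everything else reduces to the universal property of the Verma module (Lemma \ref{homexist}(1)) and the characterization of $J_{t,c}(\tau)$ as the maximal proper graded submodule (Proposition \ref{bform}(e)), both of which have been stated earlier.
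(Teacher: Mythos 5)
Your proof is correct and is the standard argument this proposition rests on; the paper itself states it without proof (it appears in the background section where results are quoted from the literature). Both directions are handled properly: the key points — that $\h$ must kill the lowest graded piece, that a simple $G$-submodule exists there even in the modular setting, and that the kernel of the resulting surjection from $M_{t,c}(\tau)$ must coincide with $\Ker B$ by Proposition \ref{bform}(e) — are exactly the intended ingredients.
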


\subsection{Characters}\label{char}

\begin{defn}\label{defchar}
Let $K_0(G)$ be the Grothendieck ring of the category of finite dimensional representations of $G$ over $\Bbbk$. For $M$ a representation of $G$, let $[M]$ be the corresponding element of $K_0(G)$. For $M=\oplus_{i}M_i$ any graded $H_{t,c}(G,\h)$ module with finite dimensional graded pieces, define its character to be 
$$\chi_{M}(z)=\sum_{i}[M_{i}] z^i \in K_0(G)[[z,z^{-1}]],$$ and its Hilbert series to be $$\mathrm{Hilb}_{M}(z)=\sum_{i}\dim(M_{i})z^i.$$
\end{defn} 

Any $M \in \mathcal{O}$ is finite dimensional and has character in $K_0(G)[z,z^{-1}]$.  Note that this notion of character differs from the usual class functions from the representation theory of finite groups.

\begin{prop}\label{charMLN}
\begin{enumerate}
\item The character of the Verma module $M_{t,c}(\tau)$ is
$$\chi_{M_{t,c}(\tau)}(z)=\sum_{i\ge 0}[S^{i}\h^* \otimes \tau]z^i,\,\,\,\,\,\,\,\, \mathrm{Hilb}_{M_{t,c}(\tau)}(z)=\frac{\dim(\tau)}{(1-z)^n}.$$

\item The character of the irreducible module $L_{t,c}(\tau)$ is
$$\chi_{L_{t,c}(\tau)}(z)=\sum_{i\ge 0}\left( [S^{i}\h^* \otimes \tau]-[\Ker B_i] \right) z^i,\,\,\,\,\,\,\,\, \mathrm{Hilb}_{M_{t,c}(\tau)}(z)=\sum_{i\ge 0} (\rank B_i) z^i .$$

\item The characters of baby Verma modules at $t=0$ and $t=1$ are related by 
$$\chi_{N_{1,c}(\tau)}(z)=\chi_{N_{0,c}(\tau)}(z^p)\left(\frac{1-z^p}{1-z} \right)^{\dim \h}.$$

\item If $G$ is such that the algebra of invariants $(S\h^*)^G$ is a polynomial algebra with homogeneous generators of degrees $d_1,\ldots d_n$, then the character and the Hilbert series of a baby Verma module $N_{t,c}(\tau)$, for $t=0,1$, is:
$$\chi_{N_{0,c}(\tau)}(z)=\chi_{M_{0,c}(\tau)}(z)(1-z^{d_1})(1-z^{d_2})\ldots (1-z^{d_n}).$$
$$\chi_{N_{1,c}(\tau)}(z)=\chi_{M_{1,c}(\tau)}(z)(1-z^{pd_1})(1-z^{pd_2})\ldots (1-z^{pd_n}).$$
$$ \mathrm{Hilb}_{N_{0,c}(\tau)}(z)=\dim(\tau)\frac{(1-z^{d_1})(1-z^{d_2})\ldots (1-z^{d_n})}{(1-z)^n}.$$
$$ \mathrm{Hilb}_{N_{1,c}(\tau)}(z)=\dim(\tau)\frac{(1-z^{pd_1})(1-z^{pd_2})\ldots (1-z^{pd_n})}{(1-z)^n}.$$

\end{enumerate}
\end{prop}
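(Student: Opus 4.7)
The plan is to handle the four parts in sequence, with (1) and (2) being essentially formal, (4) being the main Koszul computation, and (3) a corollary of (4) combined with the Hilbert-series structure of $S\h^*$.

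Parts (1) and (2) are immediate. Part (1) reads off from Lemma \ref{homexist}(2): the identification $M_{t,c}(\tau)\cong S\h^*\otimes\tau$ of graded $G$-modules (with grading inherited from $S\h^*$) gives the character formula directly, and the Hilbert series follows from $\dim S^i\h^*=\binom{i+n-1}{n-1}$ where $n=\dim\h$. Part (2) follows from Proposition \ref{bform}(e), which gives a graded short exact sequence $0\to \Ker B_i\to M_{t,c}(\tau)_i\to L_{t,c}(\tau)_i\to 0$; the character formula is the associated alternating sum, and the Hilbert-series identity $\dim L_{t,c}(\tau)_i=\rank B_i$ is the standard rank--nullity fact for the bilinear form $B_i$ (I note the stated formula appears to contain a typo, with ``$\mathrm{Hilb}_{M_{t,c}(\tau)}$'' intended to read ``$\mathrm{Hilb}_{L_{t,c}(\tau)}$'').

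For part (4), I would use the Koszul resolution. When $(S\h^*)^G=\Bbbk[f_1,\ldots,f_n]$ is polynomial, a classical argument shows $S\h^*$ is a free $(S\h^*)^G$-module, so $f_1,\ldots,f_n$ is a regular sequence in $S\h^*$. The associated Koszul complex
\[ 0\to S\h^*[-D] \to \cdots \to \bigoplus_i S\h^*[-d_i] \to S\h^* \to S\h^*/(f_1,\ldots,f_n)S\h^* \to 0, \]
with $D=\sum_i d_i$, is $G$-equivariantly exact: because each $f_i$ is $G$-fixed, the $k$-th Koszul term is a direct sum of $\binom{n}{k}$ shifted copies of $S\h^*$ with trivial $G$-multiplicity. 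Taking Euler characteristic in $K_0(G)[z]$ yields $[S\h^*/(f_1,\ldots,f_n)]=[S\h^*]\prod_i(1-z^{d_i})$; tensoring with $\tau$ over $\Bbbk$ and using $M_{0,c}(\tau)\cong S\h^*\otimes\tau$ from (1) gives the claimed formula for $\chi_{N_{0,c}(\tau)}(z)$. The $t=1$ case is identical, applied to the regular sequence $f_1^p,\ldots,f_n^p$ (of degrees $pd_i$), which is regular since raising a regular sequence to $p$-th powers remains regular in characteristic $p$. The Hilbert series formulas then follow by passing to dimensions.

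Part (3) is a corollary. Dividing the two character formulas from (4), the common factor $\prod_i(1-z^{pd_i})$ cancels and the claim reduces to $\chi_{S\h^*}(z)/\chi_{S\h^*}(z^p)=((1-z^p)/(1-z))^n$. At the Hilbert-series level this is immediate from $\mathrm{Hilb}_{S\h^*}(z)=1/(1-z)^n$. The principal obstacle is lifting this to a full $G$-equivariant character identity; for this, one exploits that $G\subseteq GL_n(\F_p)$ acts through $\F_p$-rational matrices, so Frobenius commutes with the $G$-action. This gives a $G$-equivariant graded ring isomorphism $S\h^*\cong \Bbbk[x_1^p,\ldots,x_n^p]$ (with grading scaled by $p$), and combined with graded $G$-equivariant freeness of $S\h^*$ over $\Bbbk[x_1^p,\ldots,x_n^p]$, this yields the refined character identity in $K_0(G)[[z]]$.
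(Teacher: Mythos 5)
The paper states Proposition \ref{charMLN} without proof (it is quoted from the references \cite{BalChen} and \cite{etingof-ma}), so there is no internal argument to compare against. Your parts (1) and (2) are correct as written (and yes, ``$\mathrm{Hilb}_{M_{t,c}(\tau)}$'' in (2) is a typo for $\mathrm{Hilb}_{L_{t,c}(\tau)}$). Part (4) is also correct: $S\h^*$ is Cohen--Macaulay and finite over the polynomial ring $(S\h^*)^G=\Bbbk[f_1,\ldots,f_n]$, so the $f_i$ --- and likewise the $f_i^p$, which generate the ideal $((S\h^*)^G)^p_+ S\h^*$ because $\Bbbk$ is perfect --- form a regular sequence, and the Koszul complex is $G$-equivariant with \emph{trivial} $G$-action on the exterior factors precisely because each $f_i$ is a $G$-invariant; taking Euler characteristics in $K_0(G)[[z]]$ gives the stated formulas.

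Part (3) contains a genuine error. The identity you propose to lift to $K_0(G)[[z]]$, namely $\chi_{S\h^*}(z)=\chi_{S\h^*}(z^p)\left(\frac{1-z^p}{1-z}\right)^n$ with the right-hand factor a scalar (a multiple of $[\triv]$ in each degree), is false: comparing coefficients of $z^1$ would give $[\h^*]=n[\triv]$. The Frobenius/freeness argument you sketch actually produces a different identity. Running the Koszul complex on the $G$-stable subspace $W=\mathrm{span}(x_1^p,\ldots,x_n^p)\cong\h^*$ (the Frobenius twist of $\h^*$, isomorphic to $\h^*$ since $G$ acts by $\F_p$-rational matrices), the $k$-th exterior factor carries $\wedge^k\h^*$ placed in degree $pk$ rather than $\binom{n}{k}$ copies of the trivial representation --- this is exactly where the situation differs from part (4), whose generators are honest invariants. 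What one gets is $\chi_{S\h^*}(z)=\chi_{S\h^*}(z^p)\cdot\chi_{S^{(p)}\h^*}(z)$, and $\chi_{S^{(p)}\h^*}(z)$ is not the scalar $\left(\frac{1-z^p}{1-z}\right)^n$. (Relatedly, ``$G$-equivariant freeness'' over $\Bbbk[x_1^p,\ldots,x_n^p]$ cannot mean a $G$-stable space of free generators, since complete reducibility fails in characteristic $p$; the $K_0$ identity needs the Koszul or filtration argument.) So part (3) as printed is only a Hilbert-series identity, or must be read with $\left(\frac{1-z^p}{1-z}\right)^{\dim\h}$ as shorthand for $\chi_{S^{(p)}\h^*}(z)$ --- the convention the paper adopts in Proposition \ref{generic}, where the character formula uses $\chi_{S^{(p)}\h^*}(z)$ and only the Hilbert formula uses the scalar. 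Your proof of the Hilbert-series version is fine; the claimed character-level lift does not go through and should instead be replaced by the identity with $\chi_{S^{(p)}\h^*}(z)$.
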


The matrix of the form $B_i$ on $M_{t,c}(\tau)_i$ depends polynomially on $c$. Combining this with the above proposition, we see that the character of $L_{t,c}(\tau)$ is the same for all generic $c$ (meaning, $c$ in a Zariski open set of $\mathbb{A}^n_\Bbbk$, where $n$ is the number of conjugacy classes of $G$), while for special $c$ (in the complimentary Zariski closed set), $\Ker B_i$ might be larger. In this paper, we will be interested only in characters at generic value of $c$.

\begin{prop}\label{generic}
\begin{enumerate}
\item 
For $t\ne 0$ and generic $c$, the character of $L_{t,c}(\tau)$ is of the form $$\chi_{L_{t,c}(\tau)}(z) =\chi_{S^{(p)}\h^*}(z) H(z^p),$$ where $S^{(p)}\h^*$ is the quotient of $S\h^*$ by the ideal generated by $x_1^p,\ldots x_n^p$, and $H\in K_{0}(G)[z]$ is a character of some finite dimensional graded $G$-representation. In particular, the Hilbert series of $L_{t,c}(\tau)$ is of the form 
$$\mathrm{Hilb}_{L_{t,c}(\tau)}(z) =\left( \frac{1-z^p}{1-z}\right)^n\cdot h(z^p),$$ for $h$ a polynomial with nonnegative integer coefficients. 
 \item The polynomial $h$ satisfies $1 \leq h(1) \leq |G|$, and $\dim L_{t,c}(\tau)=h(1) p^n$.
\end{enumerate} 
\end{prop}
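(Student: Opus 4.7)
By Theorem~\ref{pbw}(3) we rescale to $t=1$. The key starting observation is that, since $G \subseteq GL(\h)$ is defined over $\F_p$, the $p$-th power map on $\h^*$ is $G$-equivariant; hence the ideal $(x_1^p, \ldots, x_n^p) \subset S\h^*$ is $G$-stable and $S^{(p)}\h^*$ is a well-defined graded $G$-representation of total dimension $p^n$. The Frobenius-twisted subalgebra $R := \Bbbk[x_1^p, \ldots, x_n^p] \subset S\h^*$ is isomorphic as a $G$-algebra to $S\h^*$ with grading multiplied by $p$, and one obtains an isomorphism of graded $G$-representations $S\h^* \cong S^{(p)}\h^* \otimes_\Bbbk R$, refining the Hilbert series identity $\frac{1}{(1-z)^n} = \bigl(\frac{1-z^p}{1-z}\bigr)^n \cdot \frac{1}{(1-z^p)^n}$.

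Applying this decomposition to $M_{1,c}(\tau) = S\h^* \otimes \tau$ and combining with Proposition~\ref{charMLN}(3),(4) yields $\chi_{N_{1,c}(\tau)}(z) = \chi_{S^{(p)}\h^*}(z) \cdot \chi_{N_{0,c}(\tau)}(z^p)$, so the baby Verma module at $t=1$ already has character of the required form. The main task of part~(1) is to transfer this factorization to the irreducible quotient. I would show that for generic $c$, the maximal proper graded submodule $\Ker B_N \subset N_{1,c}(\tau)$ also has character of the form $\chi_{S^{(p)}\h^*}(z) \cdot K(z^p)$, so that $L_{1,c}(\tau)$ inherits the same form. The idea is to prove that the contravariant form $B_N$ decomposes compatibly with the Frobenius tensor structure $N_{1,c}(\tau) \cong S^{(p)}\h^* \otimes N_{0,c}(\tau)$ as graded $G$-representations; using that $\partial_y^p$ vanishes on $S\h^*$ in characteristic $p$, one can control how Dunkl operators act on the Frobenius factor, and at generic $c$ the pairing splits as a tensor product of a nondegenerate form on $S^{(p)}\h^*$ with a form on the Frobenius factor, forcing $\Ker B_N = S^{(p)}\h^* \otimes \Ker B_N^{(0)}$, where $B_N^{(0)}$ is the analogous form at $t=0$. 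This identifies $H(z) = \chi_{L_{0,c}(\tau)}(z)$.

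For part~(2), evaluating the Hilbert series at $z=1$ gives $\dim L_{1,c}(\tau) = p^n \cdot h(1)$; the lower bound $h(1) \geq 1$ is immediate from $L_{1,c}(\tau) \neq 0$. The upper bound $h(1) \leq |G|$ follows from the identification $h(1) = \dim L_{0,c}(\tau)$: one invokes that $L_{0,c}(\tau)$ is an irreducible module for the restricted quotient $H_{0,c}/((S\h^*)^G_+, (S\h)^G_+)$, which is $|G|^3$-dimensional with central image of dimension $|G|$, so that at generic $c$ it is an Azumaya algebra of rank $|G|^2$, forcing irreducibles to have dimension at most $|G|$. The main obstacle is the tensor factorization of $\Ker B_N$ at generic $c$: this requires carefully analyzing singular vectors of Dunkl operators in $M_{1,c}(\tau)$ and how they interact with the Frobenius decomposition, which is the technical heart of the argument.
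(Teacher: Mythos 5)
Your opening reduction (the $G$-equivariance of Frobenius, the factorization $S\h^* \cong S^{(p)}\h^* \otimes \Bbbk[x_1^p,\ldots,x_n^p]$ as graded $G$-representations, and the resulting factorization of $\chi_{N_{1,c}(\tau)}$) is correct, but the technical heart of your argument is a step that is actually false in general. You propose to show that at generic $c$ the contravariant form $B_N$ splits as a tensor product compatibly with the Frobenius decomposition, forcing $\Ker B_N = S^{(p)}\h^* \otimes \Ker B_N^{(0)}$ and hence $H(z) = \chi_{L_{0,c}(\tau)}(z)$. The paper explicitly warns (in the Remark after Theorem \ref{mainII}) that while the analogous identity does hold for baby Verma modules, the identity $H_{L_{1,c}(\tau)} = \chi_{L_{0,c}(\tau)}$ fails for irreducible modules in general: a counterexample is $G = SL_2(\F_3)$, $\tau = \triv$. (It happens to hold for $GL_2(\F_p)$, but that is one of the theorems of this paper, not an input to Proposition \ref{generic}, which is stated for general $G$.) Since your mechanism would prove this false identity, the claimed splitting of $B_N$ cannot be established: the reflection part of the Dunkl operator $D_y$ does not act diagonally with respect to the factorization $S\h^* \cong S^{(p)}\h^* \otimes \Bbbk[x_1^p,\ldots,x_n^p]$, and at $t=1$ the kernel of $B$ need not be of the form $S^{(p)}\h^* \otimes (\text{something})$ as an actual submodule --- only its class in the Grothendieck group factors.

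The argument the paper relies on (from \cite{BalChen}, sketched between Proposition \ref{generic} and Corollary \ref{newp}) is a degeneration in the parameter $c$ rather than a splitting of the form: one chooses a generic line through $c=0$ in parameter space and takes the limit $J_{t,0}'(\tau)$ of the kernels $J_{t,c}(\tau)$ inside the Grassmannian as $c \to 0$. This limit is an $H_{t,0}$-submodule of $M_{t,0}(\tau)$, in particular stable under the $c=0$ Dunkl operators $t\partial_y$; a submodule of $S\h^*\otimes\tau$ stable under all $\partial_y$ is generated by $p$-th powers $f^p \otimes v$, which yields the factorization of the character of $M_{t,0}(\tau)/J_{t,0}'(\tau)$, and this quotient has the same graded composition factors as $L_{t,c}(\tau)$ for generic $c$. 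The resulting $H$ is the character of the ``reduced module'' $R_{t,c}(\tau)$, which need not be an $H_{t,c}$-module and need not coincide with $L_{0,c}(\tau)$. Your part (2) inherits the same problem, since your bound $h(1) \le |G|$ goes through the unjustified identification $h(1) = \dim L_{0,c}(\tau)$.
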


\begin{defn} We call $H$ the \emph{reduced character}, and $h$ the \emph{reduced Hilbert series}  of $L_{t,c}(\tau)$.
\end{defn} 


The idea of the proof of \ref{generic} $(1)$ is to construct a subrepresentation $J_{t,0}'(\tau)$ of $M_{t,0}(\tau)$ which behaves as if $c=0$ were a generic point, even when it is not. More specifically, we pick a line through the origin in the space of all possible parameters $c$, such that all but finitely many points on this line are generic values of the parameter for the rational Cherednik algebra. 
We define $J_{t,0}'(\tau)$ to be the extension to $c=0$ of the map from a punctured $\Bbbk$-line to the appropriate Grassmanian associating to each generic $c$ the representation $J_{t,c}(\tau)$, and think of it as a limit of $J_{t,c}(\tau)$ as $c$ goes to zero.

This space $J_{t,0}'(\tau)$ is an $H_{t,0}(G,\h)$ subrepresentation, has the same Hilbert series as $J_{t,c}(\tau)$ for generic $c$, is contained in $J_{t,0}(\tau)$ (properly contained if $c=0$ is not a generic point), is $G$-invariant, and every graded piece $J_{t,0}'(\tau)_i$ has the same composition series as $J_{t,c}(\tau)_i$ for generic $c$.
As a consequence, the character of $L_{t,c}(\tau)$ at generic $c$ is the same as the character of $M_{t,0}(\tau)/J_{t,0}'(\tau)$. On the other hand, since $J_{t,0}'(\tau)$ is stable under Dunkl operators, 
it follows that $J_{t,0}'(\tau)$ is generated by $p$-th powers, i.e. elements of the form $f^p\otimes v$ for some $f\in S\h^*$, $v\in \tau$ (see Lemma 3.3 in \cite{BalChen}).



\begin{cor}\label{newp} Let $t\ne 0$ and $c$ be generic. 
The module $J_{t,c}(\tau)$ is generated under $S\h^*$ by homogeneous elements in degrees divisible by $p$. The images of such elements of degree $mp$ in the quotient $$(J_{t,c}(\tau)/\h^*J_{t,c}(\tau))_{mp}=J_{t,c}(\tau)_{mp}/\h^*J_{t,c}(\tau)_{mp-1}\subseteq S^{mp}\h^*\otimes \tau/\h^*J_{t,c}(\tau)_{mp-1}$$ form a subrepresentation of $S^{mp}\h^*\otimes \tau/\h^*J_{t,c}(\tau)_{mp-1}$ whose composition factors are a submultiset of composition factors of $(S^m\h^*)^p\otimes \tau/ (\h^*J_{t,0}'(\tau)_{mp-1}\cap (S^m\h^*)^p\otimes \tau)$.

Any such generator in degree $mp$ is a singular vector in the quotient of $M_{t,c}(\tau)$ by the $S\h^*$-submodule generated by all such generators from smaller degrees.
\end{cor}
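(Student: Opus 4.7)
The plan is to leverage the structure of the limit submodule $J_{t,0}'(\tau)$, which by the discussion preceding the corollary is $S\h^*$-generated by $p$-th powers $f^p\otimes v$ with $f\in S^m\h^*$ and $v\in\tau$, and to transport this generator structure back to $J_{t,c}(\tau)$ for generic $c$ along the flat family $\epsilon\mapsto J_{t,\epsilon c_0}(\tau)$ used to define $J_{t,0}'(\tau)$.

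For the first assertion, that $J_{t,c}(\tau)$ is $S\h^*$-generated in degrees divisible by $p$, I would compare the graded generator spaces $J_{t,c}(\tau)/\h^*J_{t,c}(\tau)$ at generic and limiting parameters. The dimension of $(\h^*J_{t,c}(\tau))_i$ is the rank of the multiplication map $\h^*\otimes J_{t,c}(\tau)_{i-1}\to S^i\h^*\otimes\tau$, so it is lower semicontinuous in $c$, while $\dim J_{t,c}(\tau)_i=\dim J_{t,0}'(\tau)_i$ is constant on the generic open locus. Consequently $\dim(J_{t,c}/\h^*J_{t,c})_i\le\dim(J_{t,0}'/\h^*J_{t,0}')_i$ at generic $c$, and since the right-hand side vanishes whenever $p\nmid i$ (by the $p$-th power generation of $J_{t,0}'(\tau)$), so does the left-hand side.

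For the composition factor refinement, I would upgrade this dimension inequality to the Grothendieck group $K_0(G)$. The class $[J_{t,c}(\tau)_{mp}]$ is constant along the flat family (the Brauer character of a constant-dimension subfamily of a fixed $G$-representation is locally constant), so it equals $[J_{t,0}'(\tau)_{mp}]$; applying the same argument to the kernel of the multiplication map $\h^*\otimes J_{t,c}(\tau)_{mp-1}\to S^{mp}\h^*\otimes\tau$ on the open where its dimension is constant, and then specialising to $\epsilon=0$, yields $[\h^*J_{t,0}'(\tau)_{mp-1}]\le[\h^*J_{t,c}(\tau)_{mp-1}]$ in $K_0(G)$, hence $[(J_{t,c}/\h^*J_{t,c})_{mp}]\le[(J_{t,0}'/\h^*J_{t,0}')_{mp}]$. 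The $p$-th power generation of $J_{t,0}'(\tau)$ then gives $J_{t,0}'(\tau)_{mp}\subseteq (S^m\h^*)^p\otimes\tau+\h^*J_{t,0}'(\tau)_{mp-1}$, so $(J_{t,0}'/\h^*J_{t,0}')_{mp}$ embeds as a $G$-subrepresentation of $(S^m\h^*)^p\otimes\tau/(\h^*J_{t,0}'(\tau)_{mp-1}\cap(S^m\h^*)^p\otimes\tau)$, giving the stated submultiset bound.

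For the singular vector claim, fix a generator $v\in J_{t,c}(\tau)_{mp}$ and let $N\subseteq M_{t,c}(\tau)$ be the $S\h^*$-submodule generated by all generators of $J_{t,c}(\tau)$ in degrees strictly less than $mp$. Because $J_{t,c}(\tau)$ is stable under the Dunkl operators, $D_y v\in J_{t,c}(\tau)_{mp-1}$; but by the first assertion all $S\h^*$-generators of $J_{t,c}(\tau)$ lie in degrees divisible by $p$, so $J_{t,c}(\tau)_{mp-1}$ is $S\h^*$-spanned by generators in degrees $p,2p,\dots,(m-1)p$, which all lie in $N$. Hence $D_y v\in N$ for every $y\in\h$, so $v$ is singular in $M_{t,c}(\tau)/N$. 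The main obstacle is the $K_0(G)$-level semicontinuity in the third paragraph: in the modular setting one must carefully verify that composition multiplicities of flat families of subrepresentations are locally constant along connected components of the $G$-fixed Grassmannian, which is cleanest to see via Brauer characters together with the separate deformation argument for the kernel of the multiplication map.
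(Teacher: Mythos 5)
Your proposal is correct and follows essentially the same route as the paper: compare $J_{t,c}(\tau)/\h^*J_{t,c}(\tau)$ with $J_{t,0}'(\tau)/\h^*J_{t,0}'(\tau)$ via constancy of $[J_{t,c}(\tau)_i]$ and semicontinuity of the image of the multiplication map along the flat family, then invoke the $p$-th power generation of $J_{t,0}'(\tau)$ to land in $(S^m\h^*)^p\otimes\tau/(\h^*J_{t,0}'(\tau)_{mp-1}\cap(S^m\h^*)^p\otimes\tau)$, and finish the singularity claim by noting $D_y v\in J_{t,c}(\tau)_{mp-1}$ lies in the submodule generated in lower degrees. The paper packages the comparison as a chain of composition-factor inclusions $\preccurlyeq$, but the content is identical to your argument.
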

\begin{proof} For representations $\sigma$ and $\sigma'$ of $G$, let us write $\sigma \preccurlyeq \sigma'$ if the multiset of composition factors of $\sigma$ is a subset of the multiset of  composition factors of $\sigma'$. If $\sigma$ and $\sigma'$ are graded $G$ representations, we write $\sigma \preccurlyeq \sigma'$ if $\sigma_i \preccurlyeq \sigma'_i$ for all $i$. If $\sigma \preccurlyeq \sigma'$ and $\sigma' \preccurlyeq \sigma$, then $[\sigma] = [\sigma']$ in the Grothendieck group. 

Let us first prove:  $$J_{t,c}(\tau)/\h^*J_{t,c}(\tau) \preccurlyeq J_{t,0}'(\tau)/\h^*J_{t,0}'(\tau) \preccurlyeq (S\h^*)^p\otimes \tau/ (\h^*J_{t,0}'(\tau)\cap (S\h^*)^p\otimes \tau ).$$  As $J_{t,c}(\tau)$ is a deformation of $J_{t,0}(\tau)'$, for any degree $i>0$ we have, in the Grothendieck group: $$[J_{t,0}(\tau)'_i]=[J_{t,c}(\tau)_i]$$ $$[J_{t,0}(\tau)'_{i-1}]=[J_{t,c}(\tau)_{i-1}]$$ $$\h^*J_{t,0}(\tau)'_{i-1} \preccurlyeq \h^*J_{t,c}(\tau)'_{i-1}$$ so $$J_{t,c}(\tau)/\h^*J_{t,c}(\tau) \preccurlyeq J_{t,0}'(\tau)/\h^*J_{t,0}'(\tau).$$ The statement $J_{t,0}'(\tau)/\h^*J_{t,0}'(\tau) \preccurlyeq  (S\h^*)^p\otimes \tau/ (\h^*J_{t,0}'(\tau)\cap (S\h^*)^p\otimes \tau )$ follows from $J_{t,0}'(\tau)$ being generated under $S\h^*$ by $p$-th powers. 

The module $J_{t,c}(\tau)$ is generated under $S\h^*$ by elements which have nonzero projection to $J_{t,c}(\tau)/\h^*J_{t,c}(\tau)$. Because of the above sequence of $\preccurlyeq$,  such elements only exist in degrees divisible by $p$, and their images in $J_{t,c}(\tau)/\h^*J_{t,c}(\tau) \subset S\h^*\otimes \tau/\h^*J_{t,c}(\tau)$ form a group representation which is $\preccurlyeq (S\h^*)^p\otimes \tau/ (\h^*J_{t,0}'(\tau)\cap (S\h^*)^p\otimes \tau ).$

For every $v\in J_{t,c}(\tau)_{mp}$ and every $y\in \h$, $D_{y}(v)\in J_{t,c}(\tau)_{mp-1}$. So, if $v$ is not in $\h^*J_{t,c}(\tau)_{mp-1}$, then its projection is a nonzero vector in $J_{t,c}(\tau)/\h^*J_{t,c}(\tau)$ with the property that $D_{y}(v)$ is zero in $J_{t,c}(\tau)/\h^*J_{t,c}(\tau)$, in other words a singular vector. 
\end{proof}

The reduced character $H(z)$ is computed as follows: if $J_{1,0}'(\tau)$ is generated by some collection of $p$-th powers $f_i(x_1^p,\ldots ,x_n^p)\otimes v_i$, then $H(z)$ is the character of the reduced module $$R_{t,c}(\tau)=S\h^*\otimes \tau /\left<f_i(x_1,\ldots x_n)\otimes v_i \right>.$$ The reduced module $R_{t,c}(\tau)$ is a $\Bbbk[G]\ltimes S\h^*$-module, but does not  have to be stable under Dunkl operators and might not be an $H_{t,c}(G,\h)$-module.

For $G=GL_{2}(\F_p)$, the character and Hilbert series of $S^{(p)}\h^*$ are
$$\chi_{S^{(p)}\h^*}(z)= \chi_{S\h^*}(z)-2\chi_{S\h^*}(z)z^p+\chi_{S\h^*}(z)z^{2p},\,\,\,\,\,\,\,\, \mathrm{Hilb}_{S^{(p)}\h^*}(z)=\left( \frac{1-z^p}{1-z}\right)^2.$$

\section{The group $GL_2(\F_p)$}

For the rest of the paper, we will be considering the rational Cherednik algebra associated to the group $GL_{2}(\F_p)$, for $\F_p\subseteq \Bbbk$ the finite field of $p$ elements. This group has $(p^2-1)(p^2-p)$ elements; since $p$ divides the order of the group, its category of representations is not semisimple. Let $\h_{\F}=\F_p^2$ and $\h=\Bbbk^2$ be the column vector representation of $G$, $y_1,y_2$ the tautological basis of $\h$ and $x_1,x_2$ the dual basis of $\h^*$. For a group element $g\in GL_{2}(\F_p)$, the matrix of its action on $\h$, written in $y_1,y_2$, is $g$, and the matrix of its action on $\h^*$, written in $x_1,x_2$, is $(g^{t})^{-1}$.
Let $$ d_{\lambda}=\left[\begin{array}{cc} \lambda ^{-1} & 0 \\ 0 & 1 \end{array}\right], \lambda\ne 1,  \,\,\,\,\, d_{1}=\left[\begin{array}{cc} 1 & 1 \\ 0 & 1 \end{array}\right].$$ 
be representatives of the conjugacy classes of reflections in $GL_{2}(\F_p)$ $C_{\lambda}=\{ gd_{\lambda}g^{-1} | g\in GL_{2}(\F_p) \}, \lambda \in \F_{p}^{\times}.$ For $\lambda \ne 1$, $C_{\lambda}$ contains $(p+1)p$ elements which we call \emph{semisimple reflections}; for $\lambda = 1$, $C_{\lambda}$ contains $(p+1)(p-1)$ elements which we call  \emph{unipotent reflections}. These sets of reflections can be parametrized as follows.

\begin{prop}\label{reflequiv} 
\begin{enumerate}
\item  There exists a bijection between the set of reflections and the set $\{ \alpha \otimes \alpha^\vee \in \h_{\F}^* \otimes \h_{\F} : (\alpha, \alpha^\vee) \ne 1 \}$. The reflection $s$ corresponding to $\alpha \otimes \alpha^\vee$ acts:
\begin{align*}
\text{on $\h^*$ by} \quad & s \ldot x = x - (\alpha^\vee, x) \alpha \\
\text{on $\h$ by} \quad & s \ldot y = y + \frac{(y, \alpha)}{1 - (\alpha, \alpha^\vee)} \alpha^\vee
\end{align*}
and belongs to $C_{\lambda}$ for $\lambda=1 - (\alpha^\vee, \alpha)$.
\item Under this bijection, the conjugacy classes of $GL_{2}(\F _p)$ are:
$$\lambda\ne 1:\,\,\,\, C_{\lambda}\leftrightarrow \{\left[\begin{array}{c} 1 \\ b \end{array} \right] \otimes \left[\begin{array}{c} 1-\lambda -bd \\ d \end{array} \right]  | b,d\in \F_p  \} \cup \{ \left[\begin{array}{c} 0 \\ 1 \end{array} \right] \otimes \left[\begin{array}{c} a \\ 1-\lambda \end{array} \right]  | a \in \F_p \} $$
$$ \,\,\,\, \,\,\,\,C_{1} \leftrightarrow\{\left[\begin{array}{c} 1 \\ b \end{array} \right] \otimes \left[\begin{array}{c} -bd \\ d \end{array} \right]  | b,d\in \F_p, d\ne 0  \} \cup \{ \left[\begin{array}{c} 0 \\ 1 \end{array} \right] \otimes \left[\begin{array}{c} a \\ 0 \end{array} \right]  | a \in \F_p, a\ne 0 \}.$$
\end{enumerate}
\end{prop}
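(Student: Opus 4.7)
The plan is to derive part (1) from the rank-one condition on reflections, reading off $\alpha$ and $\alpha^\vee$ from the image of $1-s$, and then to identify the conjugacy class by computing eigenvalues on $\h^*$. Part (2) will follow from a routine normalization and counting argument.

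For (1), given a reflection $s \in G$, the image of $1-s$ acting on $\h^*$ is a line, which we may take to be $\Bbbk\alpha$ with $\alpha \in \h_\F^*$ (since $s$ preserves the $\F_p$-form). The equation $(1-s)(x) = (\alpha^\vee, x)\alpha$ then uniquely determines some $\alpha^\vee \in \h_\F$, and the tensor $\alpha \otimes \alpha^\vee$ is independent of the scaling of $\alpha$, yielding the action formula on $\h^*$. Applying this formula to $x = \alpha$ itself gives $s \ldot \alpha = (1 - (\alpha^\vee, \alpha))\alpha$, so $\alpha$ is an eigenvector of $s$ on $\h^*$ with eigenvalue $1 - (\alpha^\vee, \alpha)$. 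Since $s$ also fixes a hyperplane in $\h^*$, its other eigenvalue is $1$; comparison with the representative $d_\lambda$ (which acts on $\h^*$ by $\mathrm{diag}(\lambda, 1)$) forces $\lambda = 1 - (\alpha^\vee, \alpha)$. Invertibility of $s$ corresponds to $\lambda \ne 0$, i.e.\ $(\alpha^\vee, \alpha) \ne 1$. Conversely, any such tensor defines a reflection via the formula, completing the bijection.

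To verify the $\h$-action, I would use the canonical duality $(s \ldot y, x) = (y, s^{-1} \ldot x)$. Writing $s \ldot y = y + c(y)\alpha^\vee$ (forced by $\rank(1-s)|_\h = 1$ with image $\Bbbk\alpha^\vee$) and pairing against $x = \alpha$, which is an eigenvector of $s^{-1}$ with eigenvalue $\lambda^{-1}$ when $\lambda \ne 1$, yields $c(y) = (y, \alpha)/(1 - (\alpha, \alpha^\vee))$ after a short calculation. For (2), I would normalize the tensor by scaling $\alpha \in \h_\F^*$ so that its leading nonzero coordinate equals $1$, producing two disjoint cases: $\alpha = [1, b]^T$ and $\alpha = [0, 1]^T$. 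Imposing $(\alpha^\vee, \alpha) = 1 - \lambda$ on $\alpha^\vee = [x, y]^T$ then cuts out a one-parameter affine family in each case, giving the stated parametrization with $d = y$ (resp.\ $a = x$). For $\lambda = 1$ one additionally requires $\alpha^\vee \ne 0$, i.e.\ $d \ne 0$ (resp.\ $a \ne 0$). A direct count confirms cardinalities $p(p+1)$ and $p^2 - 1$, matching $|C_\lambda|$.

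The main obstacle is the unipotent case $\lambda = 1$: the denominator $1 - (\alpha, \alpha^\vee)$ in the $\h$-action formula formally degenerates, $s$ has order $p$ rather than $2$, and the clean eigenbasis argument on $\h^*$ collapses. The remedy is to use $(\alpha^\vee, \alpha) = 0$ to derive inductively that $s^n \ldot x = x - n(\alpha^\vee, x)\alpha$, so that $s^{-1} \ldot x = x + (\alpha^\vee, x)\alpha$ in characteristic $p$, and then to check directly that $s \ldot y = y + (y, \alpha)\alpha^\vee$ satisfies $(s \ldot y, x) = (y, s^{-1} \ldot x)$. Once this compatibility is in hand, the remainder of the argument is bookkeeping.
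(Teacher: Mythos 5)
Your argument is correct and complete; the paper asserts this proposition without proof (it is treated as a routine parametrization deducible from the definition of a reflection), so there is no competing argument in the text to compare against. Your route — reading $\alpha$ and $\alpha^\vee$ off the rank-one map $1-s$ on $\h^*$, identifying $\lambda$ via the eigenvalue $1-(\alpha^\vee,\alpha)$ (respectively unipotency when this pairing vanishes), obtaining the $\h$-action from $(s\ldot y,x)=(y,s^{-1}\ldot x)$, and then normalizing the leading coordinate of $\alpha$ and counting — is the natural one and all steps check out, including the $\lambda=1$ fix via $s^{n}\ldot x = x - n(\alpha^\vee,x)\alpha$.

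Two small imprecisions, neither of which affects the proof. First, the denominator $1-(\alpha,\alpha^\vee)$ does \emph{not} degenerate at $\lambda=1$: there $(\alpha,\alpha^\vee)=0$, so the denominator equals $1$; it degenerates only at the excluded value $\lambda=0$. What actually breaks at $\lambda=1$ is your derivation of $c(y)$, since pairing $s\ldot y$ against $\alpha$ yields $c(y)\cdot(\alpha^\vee,\alpha)=0$ and gives no information — your inductive computation of $s^{-1}$ is exactly the right repair, but you should state the obstacle accurately. Second, as literally written the set $\{\alpha\otimes\alpha^\vee : (\alpha,\alpha^\vee)\ne 1\}$ contains the zero tensor, which corresponds to the identity rather than a reflection; the bijection is with the \emph{nonzero} decomposable tensors satisfying the pairing condition, which is consistent with the constraints $d\ne 0$, $a\ne 0$ appearing in the $C_1$ parametrization and with your remark that $\alpha^\vee\ne 0$ is required when $\lambda=1$.
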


\subsection{Invariants and characters of baby Verma modules}\label{babychar}

Proposition \ref{charMLN} states that the characters of baby Verma modules are easy to compute for groups $G$ for which the algebra of invariants $(S\h^*)^G$ is a polynomial algebra. Dickson \cite{dickson} showed that all $GL_{n}(\F_{p^r})$ are such groups, and constructed the invariants explicitly. We recall the construction for $GL_{2}(\F_p)$.

For $(n,m)$ an ordered pair of nonnegative integers, define $[n,m] \in S\h^*$ by
$$[n,m]=\det \left[ \begin{array}{cc} x_1^{p^n} & x_2^{p^n} \\ x_1^{p^m} & x_2^{p^m} \end{array} \right].$$ When acting on $S\h^*$ by $g\in GL_{2}(\F_p)$, this element transforms as 
$g\ldot [n,m]=(\det g)^{-1} [n,m].$ From this it follows that for $L=[1, 0]$, 
$$Q_{0}=L^{p-1}=[1,0 ]^{p-1}=(x_1^px_2-x_1x_2^p)^{p-1}=\sum_{i=0}^{p-1}x_1^{(p-1)(p-i)}x_2^{(p-1)(i+1)}$$
$$Q_{1}=\frac{[2,0]}{L}=\frac{[2,0]}{[1,0]}=\frac{x_1^{p^2}x_2-x_1x_2^{p^2}}{x_1^px_2-x_1x_2^p}=\sum_{i=0}^px_1^{(p-1)(p-i)}x_{2}^{(p-1)i}$$
are invariants. The main theorem of  \cite{dickson}, applied to $GL_{2}(\F_p)$, yields:
\begin{prop}
The ring of invariants $(S\h^*)^{GL_{2}(\F_p)}$ is a polynomial ring in $Q_0$ and $Q_1$.
\end{prop}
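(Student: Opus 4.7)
The plan is to follow Dickson's classical strategy: construct a monic polynomial over $(S\h^*)^G$ whose roots are all the $\F_p$-rational linear forms in $\h^*$, deduce from this that $S\h^*$ is integral over $\Bbbk[Q_0, Q_1]$, and conclude via a fraction-field degree comparison.

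First I would form
$$f(T) = \prod_{v \in \h_\F^*} (T - v) \in S\h^*[T].$$
Since $G = GL_2(\F_p)$ permutes the set $\h_\F^*$ of $p^2$ $\F_p$-rational linear forms, the coefficients of $f$ lie in $(S\h^*)^G$. Because the roots form an $\F_p$-subspace, $f$ is an additive polynomial in $T$ (standard argument: $f(T+S) - f(T) - f(S)$ vanishes identically in $S \in \h_\F^*$, hence is zero as a polynomial by degree in $S$), so only the coefficients of $T^{p^2}$, $T^p$, $T$ can be nonzero and
$$f(T) = T^{p^2} - \alpha T^p + \beta T$$
for some $\alpha, \beta \in (S\h^*)^G$ of degrees $p^2-p$ and $p^2-1$. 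A direct computation of the coefficient of $T$ --- grouping nonzero forms $v$ along the $p+1$ lines of $\h_\F^*$ and using Wilson's theorem $\prod_{\lambda \in \F_p^\times} \lambda = -1$ --- identifies $\beta$ with $\prod_{v\ne 0} v = L^{p-1} = Q_0$. The analogous identification of $\alpha$ with $Q_1$ proceeds either by a similar but longer expansion or by comparing leading monomials.

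Next, since $x_1, x_2$ are roots of $f(T) \in \Bbbk[Q_0, Q_1][T]$, the ring $S\h^*$ is module-finite (hence integral) over $\Bbbk[Q_0, Q_1]$. Going-up then forces $\dim \Bbbk[Q_0, Q_1] = \dim S\h^* = 2$, and a two-generated graded $\Bbbk$-algebra of Krull dimension $2$ --- whose kernel in the surjection from $\Bbbk[X,Y]$ is a prime of height zero in a domain, hence trivial --- must be a polynomial ring. In particular $Q_0, Q_1$ are algebraically independent.

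Finally, I would compare degrees in the tower $\Bbbk[Q_0, Q_1] \subseteq (S\h^*)^G \subseteq S\h^*$. A Hilbert-series computation, $\lim_{t\to 1}(1-t^{d_1})(1-t^{d_2})/(1-t)^2 = d_1 d_2$, gives $[\Frac(S\h^*) : \Frac(\Bbbk[Q_0, Q_1])] = \deg Q_0 \cdot \deg Q_1 = (p^2-1)p(p-1) = |G|$, while $[\Frac(S\h^*) : \Frac((S\h^*)^G)] = |G|$ by the standard Galois argument for a faithful finite group action on a domain. Multiplicativity forces $\Frac(\Bbbk[Q_0, Q_1]) = \Frac((S\h^*)^G)$, and since $\Bbbk[Q_0, Q_1]$ is integrally closed while $(S\h^*)^G$ is integral over it with the same fraction field, the two rings coincide. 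The hardest step is the precise identification of $\alpha, \beta$ with $Q_1, Q_0$: the field-degree argument by itself does not exclude their being a different pair of generators, so the explicit line-decomposition computation of $\beta = Q_0$ (and the analogous check for $\alpha$) is what makes the conclusion rigorous without circularity.
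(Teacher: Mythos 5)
Your proof is correct and is essentially Dickson's classical argument; the paper does not prove this proposition itself but simply cites \cite{dickson} for it. For the identification of $\alpha$ and $\beta$ that you flag as the hardest step, the quickest route is the Moore determinant identity
$$\prod_{v\in\h_{\F}^*}(T-v)=\frac{1}{[1,0]}\det\left[\begin{array}{ccc} T^{p^2} & x_1^{p^2} & x_2^{p^2}\\ T^{p} & x_1^{p} & x_2^{p}\\ T & x_1 & x_2\end{array}\right]=T^{p^2}-Q_1T^p+\frac{[2,1]}{[1,0]}\,T$$
(both sides are monic of degree $p^2$ in $T$ with the same $p^2$ roots), combined with $[2,1]=[1,0]^p$, which yields $\alpha=Q_1$ and $\beta=[1,0]^{p-1}=Q_0$ in one stroke.
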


Finally, as $\deg Q_0=p^2-1$ and $\deg Q_1=p^2-p$, we find the character formulas for baby Verma modules for $GL_2(\F_p)$:
$$\chi_{N_{0,c}(\tau)}(z)=\chi_{M_{0,c}(\tau)}(z)(1-z^{(p^2-1)})(1-z^{(p^2-p)}),$$
$$ \mathrm{Hilb}_{N_{0,c}(\tau)}(z)=\dim(\tau)\frac{(1-z^{(p^2-1)})(1-z^{(p^2-p)})}{(1-z)^2}.$$

$$\chi_{N_{1,c}(\tau)}(z)=\chi_{M_{t,c}(\tau)}(z)(1-z^{p(p^2-1)})(1-z^{p(p^2-p)}),$$
$$ \mathrm{Hilb}_{N_{1,c}(\tau)}(z)=\dim(\tau)\frac{(1-z^{p(p^2-1)})(1-z^{p(p^2-p)})}{(1-z)^2}.$$

\subsection{Representations of $GL_2(\F_p)$}

The following fact can be found in \cite{brauer}:

\begin{prop}All irreducible representations of  $GL_{2}(\mathbb{F}_{p})$ over $\Bbbk$ are of the form $$S^{i}\h \otimes (det)^j,$$ for $i=0,1,\ldots p-1$, $j=0,\ldots p-2.$ 
\end{prop}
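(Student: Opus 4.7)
The plan is to invoke the classical modular representation theory fact that the number of isomorphism classes of simple $\Bbbk G$-modules equals the number of $p$-regular conjugacy classes of $G$, and then exhibit enough pairwise non-isomorphic simple modules of the stated form.

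First I would count the $p$-regular classes of $GL_2(\F_p)$, i.e.\ the classes of semisimple (diagonalizable over $\overline{\F_p}$) elements. These fall into three types: central scalars $\lambda I$ with $\lambda \in \F_p^\times$ (giving $p-1$ classes); split semisimple classes with two distinct eigenvalues in $\F_p^\times$ (giving $\binom{p-1}{2}$ classes, parametrized by unordered pairs); and non-split semisimple classes whose eigenvalues form a Galois-conjugate pair in $\F_{p^2}^\times \setminus \F_p^\times$ (giving $(p^2-p)/2 = p(p-1)/2$ classes). Summing gives $p(p-1)$, which matches the cardinality of the candidate list $\{S^i\h \otimes (\det)^j : 0 \le i \le p-1,\ 0 \le j \le p-2\}$.

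Next I would prove each $S^i\h$ is irreducible for $0 \le i \le p-1$; twisting by the one-dimensional character $(\det)^j$ then preserves irreducibility. Let $U$ be the Sylow $p$-subgroup of upper unitriangular matrices. A direct computation gives $\left[\begin{smallmatrix} 1 & t \\ 0 & 1 \end{smallmatrix}\right] \cdot y_1^{i-k} y_2^k = \sum_m \binom{k}{m} t^m y_1^{i-k+m} y_2^{k-m}$. For $v = \sum_k c_k y_1^{i-k} y_2^k$, comparing coefficients of $y_1^i$ on both sides of the invariance equation yields $\sum_{k \ge 1} c_k t^k = 0$ for all $t \in \F_p$; since this polynomial in $t$ has degree less than $p$ and vanishes on all of $\F_p$, all $c_k$ for $k \ge 1$ must vanish, so $(S^i\h)^U = \Bbbk \cdot y_1^i$. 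Any nonzero $G$-submodule contains a nonzero $U$-fixed vector (as $U$ is a $p$-group acting on a nonzero $\Bbbk$-space), hence contains $y_1^i$. Applying the opposite unipotent elements $\left[\begin{smallmatrix} 1 & 0 \\ s & 1 \end{smallmatrix}\right]$ for $s \in \F_p$ produces the $p$ vectors $(y_1 + s y_2)^i$, which span $S^i\h$ by a Vandermonde argument on their expansions in the monomial basis, forcing the submodule to equal $S^i\h$.

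Finally, to see the $S^i\h \otimes (\det)^j$ are pairwise non-isomorphic: an isomorphism forces $i = i'$ by dimension, after which $(\det)^{j - j'}$ is the trivial character of $GL_2(\F_p)$; since $\det$ surjects onto $\F_p^\times$ (of order $p-1$), we get $j \equiv j' \pmod{p-1}$, hence $j = j'$ in the range $\{0,\dots,p-2\}$. The main obstacle is the irreducibility step, which rests essentially on two characteristic-$p$ inputs: a polynomial of degree less than $p$ vanishing on all of $\F_p$ is identically zero (used to identify the $U$-invariants), and a $p$-group has nontrivial fixed points on every nonzero $\Bbbk$-module (used to locate $y_1^i$ in an arbitrary submodule).
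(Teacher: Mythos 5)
The paper does not actually prove this proposition; it cites Brauer (\cite{brauer}) and takes the classification as known. Your argument is the standard self-contained proof of that classical fact, and it is essentially correct: the count of $p$-regular classes ($(p-1)$ central, $\binom{p-1}{2}$ split regular semisimple, $\tfrac{p(p-1)}{2}$ elliptic, totalling $p(p-1)$) is right and matches the size of the candidate list; the computation $(S^i\h)^U=\Bbbk\, y_1^i$ via the vanishing of a degree-$<p$ polynomial on all of $\F_p$ is correct; and the combination ``nonzero submodule has nonzero $U$-invariants'' plus the Vandermonde span of $(y_1+sy_2)^i$ (using that $\binom{i}{k}\not\equiv 0 \pmod p$ for $k\le i\le p-1$) does establish irreducibility. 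What your route buys over the paper's citation is a complete, elementary proof using only Brauer's counting theorem and the structure of the Sylow $p$-subgroup.

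One step deserves an actual justification rather than an assertion: after matching dimensions you claim that an isomorphism $S^i\h\otimes(\det)^j\cong S^i\h\otimes(\det)^{j'}$ forces $(\det)^{j-j'}$ to be trivial. For a general irreducible $V$ and linear character $\chi$, $V\otimes\chi\cong V$ does \emph{not} imply $\chi=\triv$, so tensor factors cannot simply be cancelled; and taking top exterior powers only yields $(p-1)\mid (j-j')(i+1)$, which is insufficient when $\gcd(i+1,p-1)>1$. The clean fix uses exactly the invariant line you already computed: the diagonal torus $T$ normalizes $U$, so an isomorphism of $G$-modules restricts to a $T$-equivariant isomorphism of the one-dimensional $U$-fixed subspaces, on which $\mathrm{diag}(a,d)$ acts by $a^{i+j}d^{j}$ and $a^{i+j'}d^{j'}$ respectively; setting $a=1$ gives $d^{j}=d^{j'}$ for all $d\in\F_p^\times$, hence $j\equiv j'\pmod{p-1}$ and so $j=j'$. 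With that line added, the proof is complete.
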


\begin{lemma}\label{duals}
As representations of $GL_{2}(\mathbb{F}_{p})$, $\h^*\cong \mathfrak{h}\otimes (det)^{-1},$ and the isomorphism is $x_{1}\mapsto -y_2,\, x_2\mapsto y_1$.
\end{lemma}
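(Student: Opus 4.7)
The lemma is an equivariance check for an explicit linear map between two--dimensional representations, so the plan is simply a direct verification, with the only obstacle being careful bookkeeping about how the dual action is written in the basis $x_1,x_2$.

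First I would fix a general element $g=\begin{pmatrix}a&b\\c&d\end{pmatrix}\in GL_2(\F_p)$ and record the action on each of the two spaces in coordinates. On $\h$, matrix multiplication on column vectors gives $g\ldot y_1=ay_1+cy_2$ and $g\ldot y_2=by_1+dy_2$. On $\h^*$, the matrix of the action in the basis $x_1,x_2$ is $(g^t)^{-1}=(ad-bc)^{-1}\begin{pmatrix}d&-c\\-b&a\end{pmatrix}$, which yields
\[
g\ldot x_1=\tfrac{1}{ad-bc}(dx_1-bx_2),\qquad g\ldot x_2=\tfrac{1}{ad-bc}(-cx_1+ax_2).
\]
Twisting by $\det^{-1}$ multiplies the action on $\h$ by the scalar $(ad-bc)^{-1}$.

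Next I would define $\phi:\h^*\to \h\otimes(\det)^{-1}$ by $\phi(x_1)=-y_2$ and $\phi(x_2)=y_1$; as a map of vector spaces it is visibly an isomorphism, so the only thing to check is $G$-equivariance. Applying $\phi$ to the formulas above and comparing with the action on $\h\otimes(\det)^{-1}$ reduces to the identities
\[
\phi(g\ldot x_1)=\tfrac{1}{ad-bc}(-by_1-dy_2)=g\ldot(-y_2)=g\ldot\phi(x_1),
\]
\[
\phi(g\ldot x_2)=\tfrac{1}{ad-bc}(ay_1+cy_2)=g\ldot y_1=g\ldot\phi(x_2),
\]
each of which is immediate from the coordinate expressions.

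Conceptually, what is happening is the general identity $V^*\cong V\otimes(\det V)^{-1}$ valid for any two-dimensional representation $V$, realized by contraction with the canonical volume form $y_1\wedge y_2\in\Lambda^2\h$; the sign in $x_1\mapsto -y_2$ is exactly what this contraction produces. The main (and only) obstacle is getting the dual convention right, i.e.\ remembering that the matrix of $g$ on $\h^*$ in the basis $x_1,x_2$ is $(g^t)^{-1}$ rather than $g^{-1}$ or $g^t$; once that is pinned down, the verification is a one-line computation.
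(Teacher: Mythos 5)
Your proof is correct: the coordinate computation checks out (both equivariance identities hold), and your closing remark about contraction with the volume form $y_1\wedge y_2\in\wedge^2\h$ is exactly the paper's one-line argument via the nondegenerate pairing $\h\otimes\h\to\wedge^2\h\cong\det$. You have simply made the paper's conceptual observation explicit in the basis $x_1,x_2,y_1,y_2$, which is the same approach spelled out in coordinates.
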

\begin{proof}
This follows directly from the nondegenerate pairing $\h \otimes \h \rightarrow \wedge^2 \h \cong \det$.
\end{proof}

Next, we will need to know how the higher symmetric powers and tensor products of such representations decompose into irreducible components. The following two propositions are well known, and can be proved directly or deduced from \cite{glover1}.
\begin{prop}\label{tensorprod}
For any $i,j>0$, there is a short exact sequence of $GL_2(\F_p)$-representations
$$0 \rightarrow S^{i-1}\h\otimes S^{j-1}\h\otimes \det \rightarrow S^{i}\h\otimes S^{j}\h \rightarrow S^{i+j}\h \rightarrow 0.$$
The first map is given by $f\otimes g \mapsto (y_{1}\otimes y_{2}-y_{2}\otimes y_{1}) \cdot f \otimes g$, and the second map is $f\otimes g \rightarrow f\cdot g.$ 
\end{prop}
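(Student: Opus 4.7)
The plan is to show the sequence is a complex of $G$-representations, match total dimensions, and then reduce exactness to proving injectivity of the first map (surjectivity of the second being manifest).

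First I would check $G$-equivariance. The multiplication map $f\otimes g \mapsto f\cdot g$ is clearly $G$-equivariant. For the first map, the point is that $\omega := y_1\otimes y_2 - y_2\otimes y_1 \in \h\otimes\h$ spans a one-dimensional subrepresentation isomorphic to $\det$: for $g = \left[\begin{array}{cc} a & b \\ c & d \end{array}\right] \in GL_2(\F_p)$, a direct expansion of $(g\cdot y_1)\otimes (g\cdot y_2) - (g\cdot y_2)\otimes(g\cdot y_1)$ collapses to $(ad-bc)\,\omega$. Hence $f\otimes g \mapsto \omega\cdot(f\otimes g) = y_1 f\otimes y_2 g - y_2 f\otimes y_1 g$ is $G$-equivariant from $\det\otimes S^{i-1}\h\otimes S^{j-1}\h$ to $S^i\h\otimes S^j\h$, the $\det$ twist on the source absorbing the character by which $\omega$ transforms. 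The composition with multiplication vanishes because $y_1 f\cdot y_2 g - y_2 f\cdot y_1 g = 0$ in the commutative algebra $S\h$.

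Next, a dimension count gives $\dim(S^{i-1}\h\otimes S^{j-1}\h) + \dim(S^{i+j}\h) = ij + (i+j+1) = (i+1)(j+1) = \dim(S^i\h\otimes S^j\h)$. Surjectivity of multiplication is immediate since any monomial $y_1^a y_2^{i+j-a}$ visibly factors. So the whole proof reduces to injectivity of the first map.

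For this, I would identify $S^i\h\otimes S^j\h$ with the bidegree-$(i,j)$ piece of the polynomial ring $\Bbbk[u_1,u_2,v_1,v_2]$, where $u_k$ (resp.\ $v_k$) corresponds to $y_k$ in the first (resp.\ second) tensor factor. Under this identification the first map becomes multiplication by $u_1 v_2 - u_2 v_1$ from bidegree $(i-1,j-1)$ to bidegree $(i,j)$; since $\Bbbk[u_1,u_2,v_1,v_2]$ is an integral domain and $u_1 v_2 - u_2 v_1 \ne 0$, this map is injective. (Alternatively one can argue by monomial coefficients: writing the input as $\sum \alpha_{a,c}\, y_1^a y_2^{i-1-a}\otimes y_1^c y_2^{j-1-c}$, the coefficient of $y_1^A y_2^{i-A}\otimes y_1^C y_2^{j-C}$ in the image is $\alpha_{A-1,C} - \alpha_{A,C-1}$, and the resulting recurrence, combined with the boundary vanishing $\alpha_{-1,C} = \alpha_{A,-1} = 0$, forces all $\alpha_{a,c} = 0$ by induction.) No step is a serious obstacle; the only place demanding attention is the determinant bookkeeping in the equivariance check.
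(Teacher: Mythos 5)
Your proof is correct and complete: the equivariance check (including that $\omega=y_1\otimes y_2-y_2\otimes y_1$ transforms by $\det$), the dimension count, and the injectivity argument via multiplication by $u_1v_2-u_2v_1$ in the domain $\Bbbk[u_1,u_2,v_1,v_2]$ together give exactness, and nothing in the argument is sensitive to the characteristic. The paper does not actually prove this proposition (it is stated as well known, provable ``directly or deduced from \cite{glover1}''), and what you have written is precisely the direct proof being alluded to.
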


\begin{prop}\label{reducibles}
Let $0 \leq j < p,$ and $n\ge 0$. There is a short exact sequence of $GL_2(\F_{p})$-representations:
$$0 \rightarrow S^j \h \otimes S^n \h \rightarrow S^{j + pn} \h \rightarrow S^{p - j - 2}\h \otimes S^{n-1}\h \otimes \mathrm{det}^{j+1} \rightarrow 0.$$
Here we use the convention $S^{i}\h=0$ if $i<0$. The first map is
$$y_1^{a}y_2^b \otimes y_1^{c}y_2^d\mapsto y_1^{a+cp}y_2^{b+dp}$$
and the second, for $0\le a,b<p$, is
$$ y_1^{a+pc}y_2^{b+pd}\mapsto \left\{ \begin{array}{r@{\, ,\ }l} {a \choose j+1}\cdot y_1^{a-j-1}y_2^{b-j-1}\otimes y_1^{c}y_2^{d}  & a+b=p+j \\ 0 & otherwise.   \end{array} \right.$$
\end{prop}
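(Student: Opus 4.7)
The plan is to establish this short exact sequence directly from the given monomial formulas. The first map $\iota$ factors as
$$S^j\h\otimes S^n\h \xrightarrow{1\otimes F} S^j\h\otimes S^{pn}\h \xrightarrow{m} S^{j+pn}\h,$$
where $F\colon S^n\h\to S^{pn}\h$ is the Frobenius $y\mapsto y^p$ and $m$ is multiplication. Since every $g\in GL_2(\F_p)$ acts on $\h$ by a matrix $(a_{ji})$ with entries in $\F_p$, the freshman's dream combined with Fermat's little theorem ($a^p=a$ in $\F_p$) gives $g\cdot y_i^p = \bigl(\sum_j a_{ji}y_j\bigr)^p = \sum_j a_{ji}y_j^p$, so $F$, and hence $\iota$, is $G$-equivariant. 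Injectivity is immediate: since $0\le a\le j<p$, the integer $a+pc$ determines $(a,c)$ uniquely.

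Next I would check $\pi\circ\iota=0$ and compare dimensions. By construction $\iota$ lands in those monomials $y_1^{a+pc}y_2^{b+pd}$ with $a+b=j$, which is precisely where $\pi$ is declared to vanish. The monomial basis of $S^{j+pn}\h$ splits by the value of $a+b\in\{j,j+p\}$: the first subset is $\iota(S^j\h\otimes S^n\h)$, while the complementary $(p-j-1)n$ monomials (indexed by $j+1\le a\le p-1$, $0\le c\le n-1$) are sent by $\pi$ to distinct basis elements of $S^{p-j-2}\h\otimes S^{n-1}\h$, scaled by $\binom{a}{j+1}\in\F_p^\times$ (nonzero, since $j+1\le a\le p-1$ forces the ordinary binomial coefficient to be prime to $p$). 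Combined with the identity $(j+1)(n+1)+(p-j-1)n=j+pn+1$, this yields exactness of the sequence of vector spaces.

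The main obstacle is to show that $\pi$ is $G$-equivariant. Because $\iota$ is equivariant, the cokernel $S^{j+pn}\h/\iota(S^j\h\otimes S^n\h)$ inherits a $G$-action, and $\pi$ descends to a linear isomorphism $\bar\pi$ onto $S^{p-j-2}\h\otimes S^{n-1}\h\otimes\det^{j+1}$; one must verify that $\bar\pi$ intertwines the group actions. As $GL_2(\F_p)$ is generated by the diagonal torus $T$, the upper unipotent $u$ (acting by $y_1\mapsto y_1+y_2$, $y_2\mapsto y_2$), and its transpose, it suffices to check equivariance on these. For $T=\diag(\lambda,\mu)$, a weight comparison using $\lambda^p=\lambda$ in $\F_p^\times$ shows the source weight $\lambda^{a+pc}\mu^{b+pd}=\lambda^{a+c}\mu^{b+d}$ agrees with the target weight $\lambda^{a-j-1+c}\mu^{b-j-1+d}(\lambda\mu)^{j+1}=\lambda^{a+c}\mu^{b+d}$, the exponent shifts being absorbed by the $\det^{j+1}$ twist. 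For $u$, one expands $u\cdot y_1^{a+pc}y_2^{b+pd}=(y_1+y_2)^a(y_1^p+y_2^p)^c y_2^{b+pd}$ into $\sum_{k,\ell}\binom{a}{k}\binom{c}{\ell}y_1^{k+p\ell}y_2^{a+b-k+p(c+d-\ell)}$; the terms with $k\le j$ lie in $\Ker\pi$ (their mod-$p$ residues sum to $j$ rather than $j+p$), and matching the remaining coefficients to those arising from $u\cdot\pi(y_1^{a+pc}y_2^{b+pd})$ reduces to the trinomial identity $\binom{a}{k}\binom{k}{j+1}=\binom{a}{j+1}\binom{a-j-1}{k-j-1}$, both sides equaling $\frac{a!}{(j+1)!\,(k-j-1)!\,(a-k)!}$. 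The transpose unipotent is handled symmetrically.
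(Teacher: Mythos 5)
Your argument is correct, and it is a genuine addition: the paper does not prove Proposition \ref{reducibles} at all, but merely remarks that it "can be proved directly or deduced from \cite{glover1}." Your direct verification --- equivariance of the first map via the Frobenius/freshman's-dream observation, the splitting of the monomial basis of $S^{j+pn}\h$ according to whether the mod-$p$ residues of the exponents sum to $j$ or to $p+j$, the dimension count, and the check of equivariance of $\pi$ on the torus and the two unipotent generators culminating in the trinomial identity $\binom{a}{k}\binom{k}{j+1}=\binom{a}{j+1}\binom{a-j-1}{k-j-1}$ --- is exactly the kind of direct proof the paper alludes to, and all the steps check out. One small point deserves to be made explicit: "the transpose unipotent is handled symmetrically" is not literally true, because the formula for $\pi$ privileges the index $1$ through the coefficient $\binom{a}{j+1}$. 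Swapping the indices replaces this coefficient by $\binom{b}{j+1}$ with $b=p+j-a$, and the computation for the lower unipotent (or, equivalently, for the Weyl element $y_1\leftrightarrow y_2$, whose determinant $-1$ contributes $(-1)^{j+1}$ through the $\det^{j+1}$ twist) goes through only because of the congruence $\binom{p+j-a}{j+1}\equiv(-1)^{j+1}\binom{a}{j+1}\pmod p$ for $j+1\le a\le p-1$. This congruence is an easy consequence of writing out the falling factorial modulo $p$, but it is the one identity your sketch silently relies on and should be recorded.
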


\section{Category $\mathcal{O}$ for the rational Cherednik algebra $H_{t,c}(GL_2(\F_p),\h)$}

The main theorem of the paper is the following:

\begin{thm}\label{mainII}
Up to a grading shift, any irreducible representations in category $\mathcal{O}$ for the rational Cherednik algebra $H_{t,c}(GL_2(\F_p),\h)$ is isomorphic to $L_{t,c}(S^{i}\h\otimes (\det)^{j})$ for some $0\le i \le p-1$ and $0\le j\le p-2$. The characters and Hilbert polynomials of these representations are as follows. 

For $t=0$ and generic $c$:

\vspace{0.5cm}
\begin{tabular}{|c|c|c|}
\hline$i$ & $\chi_{L_{0,c}(S^i\h \otimes \det^j)}(z)$ & $\mathrm{Hilb}_{L_{0,c}(S^i\h \otimes \det^j)}(z)$ \\
\hline $[0, p-3]$ & $\displaystyle [S^i\h \otimes (\det)^{j}]$ & $\displaystyle i+1$ \\
\hline$p-2$ & $\displaystyle [S^{p-2}\h \otimes (\det)^{j}]+[S^{p-1}\h\otimes (\det)^{j-1}]z$ & $\displaystyle (p-1)+pz+(p-1)z^2.$  \\
& $+[S^{p-2}\h \otimes (\det)^{j-1}]z^2$ & \\
\hline$p-1$ & $\displaystyle \chi_{M_{0,c}(S^{p-1}\h \otimes (\det)^{j})}(z)(1-z^{p-1})(1-z^{p^2-1})$ & $\displaystyle p\frac{(1-z^{p-1})(1-z^{p^2-1})}{(1-z)^2}$\\
\hline
\end{tabular}
\vspace{0.5cm}

For $t=1$, generic $c$ and any $\tau$, the reduced character and the reduced Hilbert polynomial of $L_{1,c}(\tau)$ are $$H_{L_{1,c}(\tau)}=\chi_{L_{0,c}(\tau)},\,\,\,\,\,\, h_{L_{1,c}(\tau)}=\mathrm{Hilb}_{L_{0,c}(\tau)}.$$
The character and Hilbert polynomial of $L_{1,c}(\tau)$ are
$$\chi_{L_{1,c}(\tau)}(z) =H_{L_{1,c}(\tau)}(z^p)\chi_{S^{}\h^*}(z)(1-z^p)^2,\,\,\,\,\mathrm{Hilb}_{L_{1,c}(\tau)}(z) =h_{L_{1,c}(\tau)}(z^p)\left( \frac{1-z^p}{1-z}\right)^2 .$$ 

\end{thm}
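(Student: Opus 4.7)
The classification assertion is immediate from the two propositions stated earlier: every irreducible in $\mathcal{O}$ is of the form $L_{t,c}(\tau)[i]$ for some irreducible $G$-representation $\tau$, and the irreducibles of $GL_2(\F_p)$ are exactly $S^i\h\otimes(\det)^j$ for $0\le i\le p-1$, $0\le j\le p-2$. The real content is the character computation, and the plan is to resolve $t=0$ first and then deduce $t=1$ via Proposition \ref{generic} and Corollary \ref{newp}.

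For $t=0$ and generic $c$, the task is to compute $\Ker B\subset M_{0,c}(S^i\h\otimes\det^j)$. I would split into three cases matching the table. For $0\le i\le p-3$, I would aim to show that every vector in $M_{0,c}(\tau)_1=\h^*\otimes\tau$ is singular. At $t=0$ the Dunkl operator $D_y$ lowers degree by $1$ but has no $\partial_y$ term, so on degree $1$ it reduces to a purely group-theoretic operator. Using Lemma \ref{duals} to identify $\h^*\cong\h\otimes\det^{-1}$ and Proposition \ref{tensorprod} to decompose $\h\otimes S^i\h$, the action of $D_y$ on each summand can be read off as a scalar depending on the character of that summand against the reflection classes; for $i\le p-3$ the summands are all irreducible of dimension $\le p-1$ and the scalars vanish. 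Hence $\h^*\otimes\tau\subset\Ker B$ and $L_{0,c}(\tau)=\tau$. For $i=p-2$, the same analysis pinpoints exactly one singular subrepresentation in degree $1$, namely a copy of $S^{p-1}\h\otimes\det^{j-1}$ arising as the top of $\h^*\otimes S^{p-2}\h\otimes\det^j$; I would then quotient by the submodule it generates and repeat in degree $2$, where the short exact sequence in Proposition \ref{reducibles} produces the $S^{p-2}\h\otimes\det^{j-1}$ singular piece. Verifying that no further singular vectors appear up to the Hilbert series bound from Section \ref{babychar} (via $\dim L\le \dim N$) closes this case. For $i=p-1$, the target character equals $\chi_{M_{0,c}(\tau)}(z)(1-z^{p-1})(1-z^{p^2-1})$, so I would exhibit two singular generators: one in degree $p^2-1$ coming from the Dickson invariant $Q_0\in(S\h^*)^G_+$ acting on $\tau$ (which always lies in $\Ker B$ at $t=0$), and one in degree $p-1$ special to $i=p-1$, which I would locate inside $S^{p-1}\h^*\otimes S^{p-1}\h\otimes\det^j$ via the copy of $\det^{p-1}\cong\triv$ in $S^{p-1}\h^*\otimes S^{p-1}\h$. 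Checking that the resulting quotient already has the conjectured Hilbert series forces it to equal $L_{0,c}(\tau)$.

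For $t=1$, Proposition \ref{generic}(1) expresses $\chi_{L_{1,c}(\tau)}$ as $\chi_{S^{(p)}\h^*}(z)\cdot H(z^p)$, so the only remaining task is to identify the reduced character $H$. By Corollary \ref{newp}, $J_{1,c}(\tau)$ is generated under $S\h^*$ by elements whose classes mod $\h^*J_{1,c}(\tau)$ live in degrees divisible by $p$ and whose composition factors form a submultiset of those obtained by \emph{replacing each $x_i$ by $x_i^p$} in the corresponding generators for $J'_{1,0}(\tau)$. Since $J'_{1,0}(\tau)$ at $t=0$ is cut out (at the level of $S\h^*$-generators, modulo $\h^*$-multiples) by exactly the singular generators identified in the $t=0$ analysis above, the reduced module $R_{1,c}(\tau)$ has character equal to $\chi_{L_{0,c}(\tau)}$. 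This yields $H_{L_{1,c}(\tau)}=\chi_{L_{0,c}(\tau)}$, and substituting into Proposition \ref{generic}(1) together with the explicit form of $\chi_{S^{(p)}\h^*}$ for $G=GL_2(\F_p)$ gives the stated full character.

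The principal obstacle is the $i=p-1$ case at $t=0$. Finding the singular vector in degree $p-1$ requires an explicit Dunkl computation rather than a character-level argument, because the degree $p-1$ graded piece of the Verma module contains several isomorphic $G$-summands and one must identify precisely which combination is annihilated by $D_y$ for \emph{all} $y\in\h$ and generic $c$. A secondary subtlety is verifying that the submodule generated by the two singular vectors in degrees $p-1$ and $p^2-1$ exhausts $\Ker B$ rather than being strictly smaller; the cleanest route is to check equality of Hilbert series against the baby Verma bound provided by Section \ref{babychar}, which is tight exactly when the ideal of singular vectors realizes the full kernel.
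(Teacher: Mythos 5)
Your overall skeleton (classification is formal; compute $\Ker B$ case by case at $t=0$; transfer to $t=1$ via Proposition \ref{generic} and Corollary \ref{newp}) matches the paper, but three of your key steps do not go through as described. First, in the case $i=p-2$ you have the composition factors backwards: in the extension $0\to S^{p-3}\h\otimes\det^j\to\h^*\otimes S^{p-2}\h\otimes\det^j\to S^{p-1}\h\otimes\det^{j-1}\to 0$ the singular vectors in degree $1$ form the \emph{sub}representation $S^{p-3}\h\otimes\det^j$, while $S^{p-1}\h\otimes\det^{j-1}$ is precisely the factor that survives into $L_{0,c}$ (it appears in degree $1$ of the character table you are trying to prove); quotienting by the piece you name would give the wrong answer, and singular vectors always form a subobject, so the ``top'' of a nonsplit extension cannot consist of them. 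You also omit the degree-$3$ singular space $S^{p-3}\h\otimes\det^{-1}$ (Lemma \ref{0,p-2,3}), without which the quotient is not finite dimensional with the stated Hilbert series. Second, for $i=p-1$ your irreducibility criterion fails: the baby Verma bound is \emph{not} tight here ($\dim N_{0,c}(S^{p-1}\h)=p(p^2-1)(p^2-p)$, which is $p$ times the dimension of the claimed $L$), so matching Hilbert series against it proves nothing. The paper instead uses the block argument (Corollary \ref{1block}) to force $\chi_L\mid\chi_M$ and then a divisibility argument with the trace character $\widetilde{ch}(z,g)$ over semisimple $g$ to pin down the quotient polynomial; moreover computing the character of the quotient $M$ at all requires the nontrivial intersection/determinant computation ($\det\mathbf{A}=\pm Q_1$) showing how the degree-$(p-1)$ generators interact with $Q_0,Q_1$.

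Third, your $t=1$ argument assumes that the reduced character equals $\chi_{L_{0,c}(\tau)}$ because ``$J'_{1,0}(\tau)$ is cut out by exactly the singular generators identified in the $t=0$ analysis.'' But $J'_{1,0}(\tau)$ is the limit of $J_{1,c}(\tau)$ as $c\to 0$, not the kernel $J_{0,c}(\tau)$ at $t=0$ and generic $c$, and Corollary \ref{newp} only bounds the generators of $J_{1,c}(\tau)$ from above (a submultiset of composition factors). The identity $H_{L_{1,c}(\tau)}=\chi_{L_{0,c}(\tau)}$ is a \emph{conclusion} of separate case-by-case work at $t=1$ --- one must verify that the $p$-th-power analogues of the $t=0$ singular vectors are actually singular (Propositions \ref{1,smalli}, \ref{1,p-2}, Lemmas \ref{1,p-2,1,sing}--\ref{1,p-2,3,sing}) and that nothing else is (Lemmas \ref{1,p-2,1,nonsing}, \ref{1,p-2,2,nonsing}, using the socle computations from \cite{glover1}). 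The paper's remark after Theorem \ref{mainII} gives an explicit counterexample ($G=SL_2(\F_3)$, $\tau=\triv$) showing this transfer is not automatic, so your formal deduction cannot be correct as stated.
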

\begin{proof}
Lemma \ref{det} shows how all the formulas for the characters of $L_{t,c}(S^i\h\otimes (\det)^j)$ follow from the ones for $j=0$. Those are proved for $0\le i<p-2$ in Propositions \ref{0,smalli} (for $t=0$) and \ref{1,smalli} (for $t=1$); for $i=p-2$ in Propositions \ref{0,p-2} (for $t=0$) and \ref{1,p-2} (for $t=1$), and for $i=p-1$ in Propositions  \ref{charm} and \ref{0,p-1,final} (for $t=0$) Propositions \ref{1charm} and \ref{1,p-1,final} (for $t=1$).
    \end{proof}

\begin{rem}
For $G=GL_2(\F_p)$, the reduced character $H_{L_{1,c}(\tau)}$ for $L_{1,c}(\tau)$ is equal to the character $\chi_{L_{0,c}(\tau)}$ of $L_{0,c}(\tau)$ for all $\tau$. The analogous statement is always true for baby Verma modules and one might be tempted to conjecture that it holds for irreducible modules in general. This is however not true: a counterexample is $G=SL_2(\F_3)$, $\tau=\triv$.
\end{rem}

\subsection{Blocks}

Consider the following element of $H_{t,c}(GL_{2}(\F_p),\h)$ (see \cite{etingof-ma}):
$$\mathbf{h}=\sum_{i=1,2} x_iy_i-\sum_{s\in S}c_s s.$$ 
Direct computation shows that $$[\mathbf{h},x] =tx,\, [\mathbf{h},y] = -ty,\, [\mathbf{h},g] =0$$ for $x \in \mathfrak{h}^*, y \in \mathfrak{h}, g\in G$. 

For every conjugacy class $C$ of reflections, $\sum_{s\in C} s $ is central in the group algebra of $GL_{2}(\F_p)$, so it acts as a constant on every irreducible representation. Consider the lowest weight subspace $\tau \cong M_{t,c}(\tau)_{0}\subseteq M_{t,c}(\tau)$: $y_i$ act on it as $0$, so $\mathbf{h}$ acts on it by a constant $h_c(\tau)=-\sum_{s\in S}c_s s|_{\tau}.$ From this and $[\mathbf{h},x] =tx$ it follows that $\mathbf{h}$ acts on $M_{t,c}(\tau)_m$ by $h_c(\tau)+tm\in \Bbbk$.

Now, assume that $L_{t,c}(\sigma)[m]$ is a composition factor of $M_{t,c}(\tau)$. Then $\mathbf{h}$  acts on $\sigma \subseteq M_{t,c}(\tau)_m$ as $h_c(\sigma)$ and as $h_c(\tau)+tm$, so $$h_c(\sigma)=h_c(\tau)+tm.$$

If $t=0$, the above formula simplifies to $h_c(\sigma)=h_c(\tau)$. If $t=1$ and $c$ is generic, then Proposition \ref{generic} implies that  the only composition factors in $M_{1,c}(\tau)$ are of the form $L_{1,c}(\sigma)[mp]$, so the above formula also simplifies to $h_c(\sigma)=h_c(\tau)$. Thus, we get the following lemma. 

\begin{lemma}\label{hc1}
Let $t=0$ or let $t=1$ and $c$ be generic. If $L_{t,c}(\sigma)[m]$ is a composition factor of $M_{t,c}(\tau)$, then $h_c(\sigma)=h_c(\tau)$.
\end{lemma}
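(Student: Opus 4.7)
The plan is to compare two descriptions of the scalar action of $\mathbf{h}$ on a copy of $\sigma$ realized inside $M_{t,c}(\tau)$.

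First, the preceding paragraph establishes that $\mathbf{h}$ acts on $M_{t,c}(\tau)_d$ by the scalar $h_c(\tau) + td$. For a composition factor $L_{t,c}(\sigma)[m]$ of $M_{t,c}(\tau)$, the lowest-weight subspace of the factor realizes $\sigma$ inside some graded piece $M_{t,c}(\tau)_m$. Since $\h$ annihilates this subspace, $\mathbf{h}$ restricts there to $-\sum_{s\in S} c_s\, s|_\sigma = h_c(\sigma)$. Equating the two expressions gives the key identity
\[
h_c(\sigma) \;=\; h_c(\tau) + tm.
\]
For $t = 0$ the lemma is immediate. For $t = 1$ (and generic $c$) it remains to verify that $m \equiv 0 \pmod{p}$, for then $tm = m \equiv 0$ in $\Bbbk$.

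To prove divisibility of $m$ in the $t = 1$ case, I would pass to Hilbert series. By Proposition \ref{generic}, each $\mathrm{Hilb}_{L_{1,c}(\sigma)}(z)$ factors as $\left(\frac{1-z^p}{1-z}\right)^n h_\sigma(z^p)$ with $h_\sigma$ a nonzero polynomial having nonnegative integer coefficients. Summing over a Jordan--H\"older filtration of $M_{1,c}(\tau)$, using $\mathrm{Hilb}_{M_{1,c}(\tau)}(z) = \dim(\tau)/(1-z)^n$, and canceling the common factor $((1-z^p)/(1-z))^n$ yields
\[
\frac{\dim(\tau)}{(1-z^p)^n} \;=\; \sum_{\sigma,m} n_{\sigma,m}\, z^m\, h_\sigma(z^p),
\]
where the $n_{\sigma,m} \in \Z_{\ge 0}$ are the composition multiplicities. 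The left-hand side is supported only at degrees divisible by $p$. Since each $h_\sigma$ is nonzero with nonnegative coefficients and the multiplicities are nonnegative, no cancellation can occur on the right, so every $m$ with $n_{\sigma,m} > 0$ must be divisible by $p$. Combined with the identity above, this yields $h_c(\sigma) = h_c(\tau)$.

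The main obstacle is the $t=1$ divisibility step: one must recognize that nonnegativity of Hilbert coefficients and of composition multiplicities precludes cancellation, which is what transfers the $p$-adic support condition on the left to the individual shift $m$ on the right. Once this is in hand, the rest of the argument is a direct comparison of two scalars.
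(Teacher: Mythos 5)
Your proof is correct and follows the same route as the paper: both derive $h_c(\sigma)=h_c(\tau)+tm$ from the two descriptions of the scalar action of $\mathbf{h}$ on the lowest-weight copy of $\sigma$, and then reduce the $t=1$ case to showing $p\mid m$. The paper simply asserts this divisibility as a consequence of Proposition \ref{generic}; your Hilbert-series comparison, using that $\dim(\tau)/(1-z^p)^n$ is supported in degrees divisible by $p$ while nonnegativity of the coefficients of each $h_\sigma$ and of the multiplicities forbids cancellation, is a correct and complete justification of exactly that step.
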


The constants $h_c(\tau)$ are easy to calculate directly. 

\begin{lemma} For $GL_{2}(\F_p)$ and conjugacy class $C_{\lambda}$ of reflections in it, the action of the central element $\sum_{s\in C_{\lambda}}s$  on symmetric powers of the reflection representation is:
$$\textrm{For } \lambda\ne 1,\,\,\,\, \sum_{s\in C_{\lambda}}s |_{S^{i}\h}=\left\{ \begin{array}{r@{\, ,\ }l} 0 & i< p-1 \\ 1 & i=p-1   \end{array} \right.$$
$$\textrm{For } \lambda= 1,\,\,\,\, \sum_{s\in C_{1}} s |_{S^{i}\h}=\left\{ \begin{array}{r@{\, ,\ }l} -1 & i< p-1 \\ 0 & i=p-1   \end{array} \right.$$
So, for $\tau=S^{i}\h \otimes \mathrm{det}^j$, the action of $\mathbf{h}$ on the lowest weight $\tau \subseteq M_{t,c}(\tau)$ is by the constant $$h_c(\tau)= \left\{ \begin{array}{r@{\, ,\ }l} c_1 & i< p-1 \\ -\sum_{\lambda \ne 0,1} \lambda^j c_{\lambda} & i=p-1   \end{array} \right.$$
\end{lemma}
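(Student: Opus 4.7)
The plan is to exploit the fact that each $z_\lambda := \sum_{s \in C_\lambda} s$ lies in the center of $\Bbbk[GL_2(\F_p)]$, so it acts by a scalar $\mu_{\lambda, i}$ on the irreducible $G$-module $S^i \h$ for $0 \le i \le p-1$. Since every element of $C_\lambda$ has the same determinant as $d_\lambda$, tensoring with $\det^j$ multiplies the scalar by $(\det d_\lambda)^j$, so it suffices to compute the $\mu_{\lambda, i}$; the formula for $h_c(\tau)$ then follows from the decomposition $\sum_{s \in S} c_s s = \sum_\lambda c_\lambda z_\lambda$ together with the defining relation $h_c(\tau) = -\sum_\lambda c_\lambda (z_\lambda|_\tau)$.

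For $0 \le i < p-1$ the dimension $i+1$ of $S^i\h$ is invertible in $\Bbbk$, so $\mu_{\lambda, i}$ is pinned down by the trace formula $\mu_{\lambda, i} = |C_\lambda|\,\mathrm{tr}(d_\lambda|_{S^i\h})/(i+1)$. For $\lambda \ne 1$, $|C_\lambda| = p(p+1) \equiv 0 \pmod{p}$, forcing $\mu_{\lambda, i} = 0$. For $\lambda = 1$, the expansion $d_1 \cdot y_1^a y_2^{i-a} = y_1^a(y_1 + y_2)^{i-a} = y_1^a y_2^{i-a} + (\text{terms with strictly more }y_1)$ shows that $d_1$ is unitriangular in the monomial basis of $S^i\h$, so $\mathrm{tr}(d_1|_{S^i\h}) = i+1$; combined with $|C_1| = p^2 - 1 \equiv -1$, this gives $\mu_{1, i} = -1$, and hence $h_c(\tau) = c_1$.

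The main obstacle is the case $i = p-1$, where $\dim S^{p-1}\h = p$ vanishes in $\Bbbk$ and the trace formula is uninformative. I plan a direct computation of $z_\lambda \cdot y_2^{p-1}$ using the explicit parametrization of $C_\lambda$ in Proposition \ref{reflequiv}: a reflection with data $(\alpha, \alpha^\vee)$ sends $y_2$ to $y_2 + \frac{(y_2, \alpha)}{\lambda}\alpha^\vee$ (using $1 - (\alpha, \alpha^\vee) = \lambda$). Raising to the $(p-1)$-th power absorbs the prefactor via Fermat's identity $\lambda^{p-1} = 1$, and summing over the parameters $a \in \F_p$ (family 2) and $b, d \in \F_p$ (family 1) reduces the computation to binomial expressions in the power sums $\sum_{x \in \F_p^\times} x^k$, which vanish unless $(p-1) \mid k$. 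Only $k = 0$ and $k = p - 1$ contribute; after the substitution $v = \lambda + bd$ treats family 1 uniformly, the surviving terms collapse to $z_\lambda \cdot y_2^{p-1} = y_2^{p-1}$ for $\lambda \ne 1$ (giving $\mu_{\lambda, p-1} = 1$) and $z_1 \cdot y_2^{p-1} = 0$ (giving $\mu_{1, p-1} = 0$). Plugging these into the assembly formula produces the stated expression for $h_c(\tau)$ at $i = p-1$.
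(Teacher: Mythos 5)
Your proposal is correct, and it diverges from the paper's proof in a worthwhile way on the case $i<p-1$. The paper treats all $i$ uniformly: it notes that the class sum acts by a scalar on the irreducible $S^i\h$, computes $\sum_{s\in C_\lambda}s\cdot y_1^i$ term by term via the parametrization of Proposition \ref{reflequiv}, reads off the coefficient of $y_1^i$, and reduces everything to power sums over $\F_p$. You use that same strategy only for $i=p-1$ (applied to $y_2^{p-1}$ rather than $y_1^{p-1}$, which is immaterial), and your reduction there checks out: after the substitution $v=\lambda+bd$, only the $(p-1)$-st power sums survive, giving $1$ for $\lambda\ne 1$ and $0$ for $\lambda=1$. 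For $i<p-1$ you instead invoke the class-sum eigenvalue formula $\mu_{\lambda,i}=|C_\lambda|\,\mathrm{tr}(d_\lambda|_{S^i\h})/(i+1)$, valid here because $S^i\h$ is absolutely irreducible and $i+1$ is invertible mod $p$; this replaces the paper's summation over $C_\lambda$ by the single observations $|C_\lambda|\equiv 0$ for $\lambda\ne 1$, $|C_1|\equiv -1$, and $\mathrm{tr}(d_1|_{S^i\h})=i+1$ by unitriangularity. That is genuinely cleaner and makes transparent why the answer degenerates exactly at $i=p-1$, where the formula breaks down. One small point shared with the paper: both proofs leave the final twist by $\det^j$ implicit; carrying it out with $\det d_\lambda=\lambda^{-1}$ literally yields $-\sum_{\lambda\ne 0,1}\lambda^{-j}c_\lambda$ for $i=p-1$, so the $\lambda^j$ in the stated formula is a labeling convention of the paper rather than something your argument needs to produce.
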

\begin{proof}

We use the parametrization of conjugacy classes from Proposition \ref{reflequiv}. As $\sum_{s\in C_{\lambda}}s$ is central in the group algebra and acts on $S^{i}\h$ as a constant, it is enough to compute $\sum_{s\in C_{\lambda}}s.y_1^i$. We may disregard all terms of the form $y_1^{i-j}y_2^j$ for $j>0$, as we know these sum up to zero. We use Lemma 2.26 of \cite{BalChen} several times.

For $\lambda \ne 1$, the action of $\sum_{s\in C_{\lambda}}s$ on $y_1^i$ is by a constant: 
$$\sum_{b,d\in \F_{p}} \left( 1+\frac{1}{\lambda}\cdot 1\cdot (1-\lambda-bd)\right)^i + \sum_{a\in \F_p}\left(1+\frac{1}{\lambda}\cdot 0 \right) ^i=$$
$$=\frac{1}{\lambda^i} \sum_{b,d\in \F_{p}} \left( 1-bd\right)^i =\frac{-1}{\lambda^i}\sum_{m\in \F_p} m^i = -\sum_{m\in \F_p} m^i = \left\{ \begin{array}{r@{\, ,\ }l} 0 & i< p-1 \\ 1 & i=p-1   \end{array} \right.$$

For $\lambda=1$, a similar computation yields:
$$\sum_{\substack{b,d\in \F_{p} \\ d\ne 0}} \left( 1-bd\right)^i + \sum_{\substack{a\in \F_p\\ a\ne 0}}1^i=(p-1)\left( \sum_{m\in \F_{p}}m^i+1 \right)=\left\{ \begin{array}{r@{\, ,\ }l} -1 & i< p-1 \\ 0 & i=p-1.  \end{array} \right. $$

The action of $\mathbf{h}$ on irreducible $S^i\h \otimes \det^j$ can now be computed directly.
\end{proof}

\begin{cor}\label{1block} For generic $c$, the representations of the form $L_{t,c}(S^{p-1}\h\otimes \det ^j)$ form blocks of size one, meaning that the only irreducible representations that appear as composition factors in any representation with lowest weight $S^{p-1}\h\otimes \det ^j$ are isomorphic, up to grading shifts, to $L_{t,c}(S^{p-1}\h\otimes \det ^j)$. \label{blocks}
\end{cor}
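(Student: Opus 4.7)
The plan is to use the central element $\mathbf{h}$ as a central-character separator: by Lemma \ref{hc1}, every composition factor $L_{t,c}(\sigma)[m]$ of a module with lowest weight $\tau = S^{p-1}\h \otimes \det^j$ must satisfy $h_c(\sigma) = h_c(\tau)$, and I will show that for $c$ in a suitable Zariski-open set this forces $\sigma \cong \tau$.

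First I would reduce to the Verma module. By Lemma \ref{homexist}, any $H_{t,c}(GL_2(\F_p),\h)$-module $M$ with lowest weight $\tau$ is a quotient of $M_{t,c}(\tau)$, so its composition factors lie among those of $M_{t,c}(\tau)$, and Lemma \ref{hc1} applies to each of them.

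Next I would use the explicit formula from the preceding lemma to tabulate $h_c$ on the candidate irreducibles $\sigma = S^i\h \otimes \det^{j'}$, with $0 \le i \le p-1$ and $0 \le j' \le p-2$. For $i < p-1$ one has $h_c(\sigma) = c_1$ while $h_c(\tau) = -\sum_{\lambda \ne 0,1}\lambda^{j}c_\lambda$; the equality $h_c(\sigma) = h_c(\tau)$ then becomes the nontrivial linear relation $c_1 + \sum_{\lambda \ne 0,1}\lambda^j c_\lambda = 0$, cutting out a hyperplane in parameter space. For $i = p-1$ and $j' \ne j$ the same equality becomes $\sum_{\lambda \ne 0,1}(\lambda^j - \lambda^{j'})c_\lambda = 0$, again a hyperplane provided some coefficient $\lambda^j - \lambda^{j'}$ is nonzero in $\Bbbk$. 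Taking the complement of the finite union of all these hyperplanes (indexed by the finitely many pairs $(i,j') \ne (p-1,j)$) yields a Zariski-open locus of $c$ on which only $\sigma \cong \tau$ can satisfy $h_c(\sigma) = h_c(\tau)$.

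The main obstacle is verifying that the second family of relations is genuinely nontrivial, i.e.\ that for each $j' \in \{0,\dots,p-2\}$ distinct from $j$ some $\lambda \in \F_p^\times \setminus \{1\}$ gives $\lambda^j \ne \lambda^{j'}$. I would argue by contradiction: if $\lambda^j = \lambda^{j'}$ for every such $\lambda$, then the polynomial $t^{|j-j'|} - 1 \in \F_p[t]$, of degree at most $p-2$, would vanish at the $p-2$ elements of $\F_p^\times \setminus \{1\}$ and also at $t = 1$, giving $p-1$ roots; this forces $j \equiv j' \pmod{p-1}$ and hence $j = j'$, a contradiction. Once this is established, intersecting all the Zariski-open conditions above gives the generic locus on which $L_{t,c}(S^{p-1}\h \otimes \det^j)$ is a block of size one.
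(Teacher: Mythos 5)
Your proposal is correct and is essentially the argument the paper intends: the corollary is deduced from Lemma \ref{hc1} together with the explicit formula $h_c(S^i\h\otimes\det^{j'}) = c_1$ for $i<p-1$ and $-\sum_{\lambda\ne 0,1}\lambda^{j'}c_\lambda$ for $i=p-1$, with genericity of $c$ ruling out the finitely many linear coincidences. The paper leaves the nontriviality of these linear relations implicit; your verification that $\lambda^j\ne\lambda^{j'}$ for some $\lambda\in\F_p^\times\setminus\{1\}$ when $j\ne j'$ (via the root-counting argument) correctly fills in that detail.
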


\subsection{Dependence of the character of $L_{t,c}(S^{i}\h\otimes (\det)^j)$ on $j$}

We start the character computations with a reduction which allows us to only consider the case $j=0$.

For any reflection group $G$, its reflection representation $\h$, a character $\psi$ of $G$, there is an isomorphism of algebras  $H_{t,c}(GL_2(\F_p),\h)\cong H_{t,c\cdot \psi}(GL_2(\F_p),\h)$ defined to be the identity on $\h$ and $\h^*$, and sending $g\in G$ to $(\psi (g))\cdot g$. Twisting by this isomorphism makes a representation $L_{t,c\cdot \psi}(\tau)$ of $H_{t,c\cdot \psi} (G,\h)$ into a representation $L_{t,c}(\tau\otimes \psi)$ of $H_{t,c} (G,\h)$. As the character (as defined above) of $L_{c}(\tau)$ is the same for all generic values of $c$, we find that $\chi_{L_{t,c}(\tau\otimes \psi)}=\chi_{L_{t,c}(\tau)}\cdot [\psi],$ and their Hilbert series are the same.

\begin{lemma}\label{det}
For the rational Cherednik algebra $H_{t,c}(GL_2(\F_p),\h)$ defined at the generic value of parameter $c$, the Hilbert polynomials and the characters of irreducible representations satisfy
$$\mathrm{Hilb}_{L_{t,c}(\tau\otimes \det)}=\mathrm{Hilb}_{L_{t,c}(\tau)}\cdot [\det].$$
$$\chi_{L_{t,c}(\tau\otimes \det)}=\chi_{L_{t,c}(\tau)}\cdot [\det].$$
\end{lemma}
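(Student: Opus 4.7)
The plan is to formalize the twist construction outlined in the paragraph preceding the statement. I would fix a one-dimensional character $\psi \colon G \to \Bbbk^\times$, and at the end specialize to $\psi = \det$. The centerpiece is the algebra map $\Phi \colon H_{t,c}(G,\h) \to H_{t,c\cdot\psi}(G,\h)$ sending $x \in \h^*$ and $y \in \h$ to themselves and each $g \in G$ to $\psi(g)\, g$. The first step is to check that $\Phi$ respects the defining relations. The group law and the semi-direct conjugation relations $g y g^{-1} = g\cdot y$ are clearly preserved, since $\psi(g)\in\Bbbk^\times$ is central. The only substantive check is the Dunkl commutator: in $H_{t,c\cdot\psi}$ the $s$-coefficient on the right-hand side is $c_s\psi(s)$, and this is exactly what $\Phi$ produces from the term $c_s\, s$ on the right-hand side of the relation in $H_{t,c}$. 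The analogous map built from $\psi^{-1}$ is a two-sided inverse, so $\Phi$ is a graded algebra isomorphism.

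Next I would transport modules along $\Phi$. For any $H_{t,c\cdot\psi}(G,\h)$-module $N$, the pullback $\Phi^*N$ has the same underlying graded vector space and the same $\h$- and $\h^*$-actions, but now $g\in G$ acts as $\psi(g)\cdot g$; as a graded $G$-module this is exactly $N\otimes\psi$. Applied to $N = L_{t,c\cdot\psi}(\tau)$, whose lowest-weight $G$-subspace is $\tau$, the pullback $\Phi^*L_{t,c\cdot\psi}(\tau)$ is an irreducible graded $H_{t,c}(G,\h)$-module with lowest weight $\tau\otimes\psi$, and therefore must be $L_{t,c}(\tau\otimes\psi)$. Taking $G$-characters of each graded piece gives
\[
\chi_{L_{t,c}(\tau\otimes\psi)}(z) \;=\; [\psi]\cdot\chi_{L_{t,c\cdot\psi}(\tau)}(z),
\]
and the same identity for Hilbert series holds after applying the dimension map $K_0(G)\to\Z$, under which $[\psi]$ becomes $1$.

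To finish I would note that $c\mapsto c\cdot\psi$ is an invertible rescaling of the parameter space (each conjugacy-class coordinate is multiplied by a nonzero scalar), so it sends the Zariski-open set of generic parameters to itself. Since the character of $L_{t,c}(\tau)$ is the same for every generic $c$, we get $\chi_{L_{t,c\cdot\psi}(\tau)} = \chi_{L_{t,c}(\tau)}$, and specializing $\psi=\det$ gives both identities. I do not anticipate a real obstacle; the argument is routine once the twist isomorphism is set up, and the only subtle point is the genericity-preservation just noted.
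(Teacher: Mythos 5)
Your proposal is correct and follows essentially the same route as the paper, which establishes the lemma via the twist isomorphism $H_{t,c}(G,\h)\cong H_{t,c\cdot\psi}(G,\h)$, $g\mapsto\psi(g)g$, described in the paragraph preceding the statement; you simply fill in the verification of the Dunkl relation and the genericity-preservation that the paper leaves implicit.
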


\section{Characters of $L_{t,c}(S^{i}\h)$ for $i=0\ldots p-3$, $p \geq 3$}

\subsection{Characters of $L_{t,c}(S^{i}\h)$ for $i=0\ldots p-3$ and $t=0$}

\begin{prop}\label{0,smalli}
For $i=0,\ldots p-3$, $t=0$ and all $c$, the space $M_{0,c}(S^i\h)_1$ consists  of singular vectors. So, the character of $L_{0,c}(S^{i}\h)$ is
$$\chi_{L_{0,c}(S^{i}\h)}(z)=[S^i\h], \,\,\,\,\, \mathrm{Hilb}_{L_{0,c}(S^{i}\h)}(z)=i+1.$$
\end{prop}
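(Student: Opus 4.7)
The plan is to prove the stronger statement that at $t=0$, every element of $M_{0,c}(S^i\h)_1 = \h^* \otimes S^i\h$ is singular when $i \le p-3$. Once this is established, the $H_{0,c}$-submodule generated by $M_{0,c}(S^i\h)_1$ exhausts the positive-degree part $\bigoplus_{k \ge 1} M_{0,c}(S^i\h)_k$ and is the maximal proper graded submodule, so $L_{0,c}(S^i\h)$ is concentrated in degree zero as a copy of $S^i\h$, giving the stated character and Hilbert polynomial.

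To check singularity I would evaluate the Dunkl operator explicitly. At $t=0$, and using that $(1-s)x_l/\alpha_s = (\alpha_s^\vee, x_l)$ in terms of the parameterization of Proposition~\ref{reflequiv}, one has
$$D_{y_j}(x_l \otimes v) = -\sum_{s \in S} c_s \,(y_j, \alpha_s)(\alpha_s^\vee, x_l)(s \cdot v) \in S^i\h.$$
I would split this sum according to the four parameterized families of Proposition~\ref{reflequiv}: two families each for the semisimple classes $C_\lambda$ ($\lambda \ne 1$), summed over $(b,d) \in \F_p^2$ and $a \in \F_p$ respectively, and two families for the unipotent class $C_1$, summed over $(b,d)$ with $d \ne 0$ and $a \in \F_p^\times$ respectively.

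The key step is to observe that in each family the summand, viewed as a polynomial in the distinguished variable $d$ (for the first family) or $a$ (for the second), has degree at most $i + 1$. Indeed, each entry of $s$ acting on $y_1, y_2$ is of degree at most $1$ in $d$ (respectively $a$), so $s \cdot v$ for $v \in S^i\h$ has degree at most $i$; the prefactor $(y_j,\alpha_s)(\alpha_s^\vee, x_l)$ contributes at most $1$ further. Since $i + 1 \le p - 2$, the power-sum identities $\sum_{u \in \F_p} u^k = 0$ for $0 \le k \le p-2$ and $\sum_{u \in \F_p^\times} u^k = 0$ for $0 < k \le p-2$ force the inner sum over $d$ (respectively $a$), and hence the whole family sum, to vanish. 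For the unipotent families the required positivity of the degree in the summation variable comes from the factor $(\alpha_s^\vee, x_l)$, which always introduces at least one power of that variable, except in the subcase $l = 2$ of the second unipotent family where $(\alpha_s^\vee, x_2) = 0$ and the term is already zero.

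The main obstacle is simply the bookkeeping across the four families and the four choices of $(j,l) \in \{1,2\}^2$, while correctly applying the parameterization of Proposition~\ref{reflequiv}. The bound $i + 1 \le p - 2$ is what makes each power-sum vanish, and this is essentially tight: for $i = p-2$ or $p-1$ the degree can reach $p - 1$ and nonzero contributions do appear, consistent with the more complicated characters in Propositions~\ref{0,p-2} and~\ref{0,p-1,final}.
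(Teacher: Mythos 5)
Your proposal is correct and follows essentially the same route as the paper: parametrize the reflections via Proposition \ref{reflequiv}, observe that each summand in $D_y(x\otimes f)$ is a polynomial of degree at most $1+i\le p-2$ in the distinguished variable $a$ or $d$ (with the unipotent families handled by the extra factor of that variable coming from $\alpha_s^\vee$), and conclude by the vanishing of power sums over $\F_p$. Your added remark that the degree-one piece then generates all of $M_+$, which must equal $\Ker B$, correctly supplies the step the paper leaves implicit.
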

\begin{proof}
The space $M_{0,c}(S^i\h)_1$ is isomorphic to $\h^*\otimes S^i\h$ as a $GL_{2}(\F_p)$- representation. To show that it consists of singular vectors, we will show that for any $x\in \h^*$, any $y\in h$, and any $f\in S^i\h$, 
$D_y(x \otimes f) = 0$.  Summing over each $C_\lambda$ and using Proposition \ref{reflequiv}, this is equivalent to showing that
$$\sum_{s\in C_{\lambda}}(y,\alpha_s)\frac{(1-s)\ldot x}{\alpha_s}\otimes s\ldot f = \sum_{\substack{\alpha\otimes \alpha^{\vee}\ne 0 \\(\alpha,\alpha^\vee)=1-\lambda}} (y,\alpha)(x,\alpha^\vee)\otimes s\ldot f = 0.$$

Using the parameterization in Proposition \ref{reflequiv} (2), 
write the above sum as a sum over all  $a,b,d\in \F_p$. Then $s\ldot f$ is a polynomial of degree $i$ in $a$ or  in $d$, so the summand $(y,\alpha)(x,\alpha^\vee)\otimes s\ldot f$ is a polynomial in degree $1+i\le p-2<p-1$ in $a$ or in $d$. By Lemma 2.26 of \cite{BalChen}, this means the sum is zero as claimed.
\end{proof}

\subsection{Characters of $L_{t,c}(S^{i}\h)$ for $i=0\ldots p-3$ and $t=1$}

A very similar computation gives the analogous answer in case $t=1$.

\begin{prop}\label{1,smalli}
For $i=0,\ldots p-3$, $t=1$ and all $c$, all the vectors of the form $x^p\otimes v \in S^p\h^*\otimes S^i\h\cong M_{1,c}(S^i\h)_p$ are singular. For generic $c$ these vectors generate $J_{1,c}(S^i\h)$, so the character and the Hilbert polynomial of the irreducible module $L_{1,c}(S^{i}\h)$ are
$$\chi_{L_{1,c}(S^{i}\h)}(z)=\chi_{S^{(p)}\h^*}(z)\cdot [S^i\h],\,\,\,\,\,\, \mathrm{Hilb}_{L_{1,c}(S^{i}\h)}(z)=(i+1) \left(\frac{1-z^p}{1-z} \right)^2, $$ 
and its reduced character and Hilbert polynomial are
$$H(z)=[S^i\h], \,\,\,\,\,\,\,\,\,\, h(z)=i+1.$$
\end{prop}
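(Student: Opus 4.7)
The plan is to prove the statement in two steps: first verify that each $u^p\otimes v$ (with $u\in\h^*$, $v\in S^i\h$) is a singular vector in $M_{1,c}(S^i\h)$ for every $c$; then use Proposition \ref{generic} and a support argument to deduce that for generic $c$ these exhaust the generators of $J_{1,c}(S^i\h)$.

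For the singularity, I would compute $D_y(u^p\otimes v)$ directly. The derivative part $pu^{p-1}(\partial_y u)\otimes v$ vanishes automatically in characteristic $p$. For the reflection part, the freshman's dream gives
\[
(1-s)\cdot u^p = u^p - \bigl(u - (\alpha_s^\vee, u)\alpha_s\bigr)^p = (\alpha_s^\vee, u)^p\,\alpha_s^p,
\]
so
\[
D_y(u^p\otimes v) = -\sum_{s\in S} c_s\,(y,\alpha_s)\,(\alpha_s^\vee,u)^p\,\alpha_s^{p-1}\otimes s\cdot v.
\]
Splitting by conjugacy class and using the parametrization of Proposition \ref{reflequiv}, I would track the degree of each summand in the ``free'' parameter ($d$ for the first family, $a$ for the second). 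The factors $(y,\alpha_s)$ and $\alpha_s^{p-1}$ are independent of it, $(\alpha_s^\vee,u)^p$ contributes degree $1$, and since $s\cdot y_1$ and $s\cdot y_2$ are each linear in this parameter, $s\cdot v$ contributes degree at most $i$. The total degree is $\le i+1\le p-2<p-1$, so Lemma 2.26 of \cite{BalChen} forces the $\F_p$-sum to vanish. For the unipotent class $C_1$ the parameter range omits $0$, but the excluded term vanishes automatically since $\alpha_s^\vee$, and hence $(\alpha_s^\vee, u)^p$, is zero there.

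For the generation statement, let $K$ be the $H_{1,c}$-submodule of $M_{1,c}(S^i\h)$ generated by $\{u^p\otimes v\}$. By Lemma \ref{homexist} and $G$-stability, $K$ coincides with the $S\h^*$-span of this set, and since $\Bbbk$ is algebraically closed the set $\{u^p: u\in\h^*\}$ spans $\langle x_1^p, x_2^p\rangle$. Hence $K=(x_1^p,x_2^p)\otimes S^i\h$ and $M_{1,c}(S^i\h)/K\cong S^{(p)}\h^*\otimes S^i\h$, with character $\chi_{S^{(p)}\h^*}(z)\cdot[S^i\h]$ supported in degrees $0,\dots,2p-2$. Since $L_{1,c}(S^i\h)$ is a further graded quotient, its character is supported in the same range. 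By Proposition \ref{generic} this character factors as $\chi_{S^{(p)}\h^*}(z)H(z^p)$ for some $H\in K_0(G)[z]$; the support constraint forces $\deg H=0$, and matching the lowest-weight piece identifies the constant $H$ with $[S^i\h]$. The resulting equality $\chi_{L_{1,c}(S^i\h)}=\chi_{M_{1,c}(S^i\h)/K}$ implies $J_{1,c}(S^i\h)=K$, and the formulas for the (reduced) character and Hilbert series drop out.

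The main technical step is the freshman's-dream reduction combined with the parameter-degree argument; this runs parallel in spirit to the $t=0$ computation in Proposition \ref{0,smalli}, with the only extra work being the tracking of degrees through the $p$-th power. Deducing the generation statement from singularity is then a formal support-comparison using Proposition \ref{generic}.
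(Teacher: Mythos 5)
Your proof is correct and follows essentially the same route as the paper's: the freshman's-dream computation of $D_y(u^p\otimes v)$ combined with the degree count in the free parameter of Proposition \ref{reflequiv} and Lemma 2.26 of \cite{BalChen} (the paper phrases the degree-$1$ claim for $(\alpha_s^\vee,u)^p$ via the equivalent reduction to $u\in\h_{\F}^*$), followed by the same appeal to Proposition \ref{generic} to pin down $H(z)=[S^i\h]$. Your support-in-degrees-$\le 2p-2$ argument for $\deg H=0$ is just a slightly more explicit version of the paper's observation that $H$ is a nonzero quotient of the irreducible $[S^i\h]$ concentrated in one degree.
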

\begin{proof}
The proof is very similar to the proof of the previous proposition. To show that all vectors of the form $x^p\otimes v \in S^p\h^*\otimes S^i\h\cong M_{1,c}(S^i\h)_p$ are singular, we need to show that 
$$D_{y}(x^p\otimes f)=-\sum_{\lambda}c_\lambda \sum_{\alpha}(y,\alpha) \alpha^{p-1}\sum_{\alpha^\vee}(x,\alpha^{\vee})^p\otimes (s\ldot f) = 0.$$
Again use Proposition \ref{reflequiv} to write this as a sum over all $a,b,d\in \F_p$ parametrizing $\alpha\otimes \alpha^{\vee}$. We may assume that $x\in \h_{\F}^*\subseteq \h^*$ by picking a convenient basis; the claim will then be true for $\Bbbk$-linear combinations of such $x$ as well. For $x\in  \h_{\F}^*$, $(x,\alpha^{\vee})$ is in $\F_p\subseteq \Bbbk$, and $(x,\alpha^{\vee})^p=(x,\alpha^{\vee})$. Using this, the inner sum over $\alpha^{\vee}$ again becomes a sum over all $d\in \F_p$ or over all $a \in \F_p$ of a polynomial $(x,\alpha^{\vee})\otimes (s\ldot f)$ of degree $1+i<p-1$ in $d$ or $a$, so the sum is zero by Lemma 2.26 of \cite{BalChen} and the vectors $x^p\otimes v$ are singular.

To see that these vectors generate $J_{1,c}(S^i\h)$ at generic $c$, we use Proposition \ref{generic}, by which the character of $L_{1,c}(S^i\h)$ is of the form $$\chi_{L_{t,c}(\tau)}(z) =\chi_{S^{(p)}\h^*}(z) H(z^p).$$ The character of the quotient of $M_{1,c}(S^i\h)$ by the singular vectors found in this lemma is $$\chi_{L_{t,c}(\tau)}(z) =\chi_{S^{(p)}\h^*}(z) [S^i\h],$$ and so the graded $GL_2(\F_p)$-representation with the character $H(z)$ is a nonzero quotient of the irreducible representation $[S^i\h]$ concentrated in one degree. Thus, $H(z)=[S^i\h]$ and the character of $L_{1,c}(S^i\h)$ is as claimed.
\end{proof}



To get formulas relating the charactes of $L_{1,c}(S^i\h)$, $N_{1,c}(S^i\h)$ and $M_{1,c}(S^i\h)$, set $d_1=p^2-p$, $d_2=p^2-1$. Using $$\chi_{N_{1,c}(\tau)}=\chi_{M_{1,c}(\tau)}(1-z^{pd_1})(1-z^{pd_2}),$$ 
we have
$$L_{1,c}(S^{i}\mathfrak{h})=M_{1,c}(S^{i}\mathfrak{h})-M_{1,c}( \mathfrak{h}^*\otimes S^{i}\mathfrak{h})[p]+M_{1,c}( \det  \otimes S^{i}\mathfrak{h})[2p]$$ and 
$$\chi_{L_{1,c}(S^i\h)}=\frac{ \chi_{N_{1,c}(S^{i}\mathfrak{h})}(z)-\chi_{N_{1,c}( \mathfrak{h}^*\otimes S^{i}\mathfrak{h})}(z)z^p+\chi_{N_{1,c}( \det  \otimes S^{i}\mathfrak{h})}(z)z^{2p}} {(1-z^{pd_1})(1-z^{pd_2})}.$$

\section{Characters of $L_{t,c}(S^{i}\h)$ for $i= p-2$}

\subsection{Characters of $L_{t,c}(S^{i}\h)$ for $i=p-2$ and $t=0$}

\begin{prop}\label{0,p-2}
The character and and the Hilbert polynomial of
 $L_{0,c}(S^{p-2}\h)$ are
$$\chi_{L_{1,c}(S^{p-2}\h)}(z) = [S^{p-2}\h]+[S^{p-1}\h\otimes (\det)^{-1}]z+[S^{p-2}\h \otimes (\det)^{-1}]z^2,$$
$$\mathrm{Hilb}_{L_{1,c}(S^{p-2}\h)}(z) = (p-1)+pz+(p-1)z^2.$$
\end{prop}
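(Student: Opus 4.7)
The plan is to compute $L_{0,c}(S^{p-2}\h)$ as a quotient of $M_{0,c}(S^{p-2}\h)$ by explicitly identifying singular vectors in the lowest few degrees and showing they exhaust the maximal graded submodule. Combining Lemma \ref{duals} with Proposition \ref{tensorprod} applied to $\h \otimes S^{p-2}\h$, the degree-one piece of the Verma module fits into a short exact sequence
$$0 \longrightarrow S^{p-3}\h \longrightarrow \h^* \otimes S^{p-2}\h \longrightarrow S^{p-1}\h \otimes (\det)^{-1} \longrightarrow 0,$$
with the subrepresentation spanned by the vectors $v_g = x_1 \otimes y_1 g + x_2 \otimes y_2 g$ for $g \in S^{p-3}\h$. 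The first step is to verify, by a direct Dunkl operator calculation patterned on the proof of Proposition \ref{0,smalli}, that $D_y v_g = 0$ for every $y \in \h$ and every $c$: after parametrizing reflections via Proposition \ref{reflequiv}, the relevant sums collapse to power sums $\sum_{a \in \F_p} a^k$ with $k \le p-2$, which vanish by Lemma 2.26 of \cite{BalChen}.

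Let $K^{(1)}$ denote the submodule generated by the $v_g$. Applying Propositions \ref{tensorprod} and \ref{reducibles} together with Lemma \ref{duals}, one decomposes $S^2\h^* \otimes S^{p-2}\h$ into the $G$-composition factors $S^{p-4}\h$, two copies of $S^{p-2}\h \otimes (\det)^{-1}$, and $\h \otimes (\det)^{-2}$. The $G$-equivariant map $\h^* \otimes S^{p-3}\h \to S^2\h^* \otimes S^{p-2}\h$, $x \otimes g \mapsto x v_g$, is injective with image exactly $K^{(1)}_2$ and accounts for the composition factors $S^{p-4}\h$ and one copy of $S^{p-2}\h \otimes (\det)^{-1}$. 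Hence $(M_{0,c}/K^{(1)})_2$ still carries $S^{p-2}\h \otimes (\det)^{-1}$ and $\h \otimes (\det)^{-2}$, and since the target character contains only the former, one must exhibit a further $\h \otimes (\det)^{-2}$ of singular vectors in degree two. The plan is to guess these explicitly from the $G$-decomposition and verify singularity by the same reflection-sum technique; this is block-consistent by Lemma \ref{hc1} since $h_c(\h \otimes (\det)^{-2}) = c_1 = h_c(S^{p-2}\h)$. By Lemma \ref{homexist} and Proposition \ref{0,smalli}, the submodule they generate is concentrated in a single degree, so quotienting trims degree two to $[S^{p-2}\h \otimes (\det)^{-1}]$ as required.

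Finally, a parallel decomposition of $S^3\h^* \otimes S^{p-2}\h$ together with careful bookkeeping of $\h^*$-multiplication on the singular vectors found so far shows that the degree-three piece of the quotient vanishes; since the remaining module is generated over $S\h^*$ in degrees $\le 2$ by $G$-representations concentrated in those degrees, every higher graded piece also vanishes. The resulting Hilbert series $(p-1)+pz+(p-1)z^2$ matches the claim, and the surjection onto $L_{0,c}(S^{p-2}\h)$ must therefore be an isomorphism. The main obstacle is the middle step: unlike the degree-one singular vectors, the degree-two ones are not forced by a short exact sequence and must be guessed by hand, and checking the degree-three vanishing requires a careful decomposition mixing composition factors from multiple sources.
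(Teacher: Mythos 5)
Your setup in degrees $0$, $1$, $2$ follows the paper's route: the paper likewise finds the singular copy of $S^{p-3}\h$ spanned by $x_1\otimes y_1f+x_2\otimes y_2f$ in degree one, and then a singular copy of $\h\otimes(\det)^{-2}$ (spanned by the images of $x_1^2\otimes y_2^{p-2}$ and $x_2^2\otimes y_1^{p-2}$) in degree two of the quotient. One remark on technique: the paper does not verify the degree-two singular vectors by a reflection sum; it observes that the Dunkl operators give a $G$-map $\h\otimes(\h\otimes(\det)^{-2})\to M^1_1\cong S^{p-1}\h\otimes(\det)^{-1}$ whose source has no composition factor isomorphic to the (irreducible) target, so the map is zero by Schur's lemma. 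That is much cleaner than the direct computation you propose, which would have to be carried out modulo the degree-one-generated submodule.

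The genuine gap is in your last step. First, the submodule generated by the degree-two singular copy of $\h\otimes(\det)^{-2}$ is \emph{not} concentrated in a single degree: by Lemma \ref{homexist} it is a quotient of $M_{0,c}(\h\otimes(\det)^{-2})[2]$, hence contains $\h^*\cdot(\h\otimes(\det)^{-2})$ in degree three; only the irreducible quotient $L_{0,c}(\h\otimes(\det)^{-2})$ is concentrated in one degree, and the generated submodule need not equal it. More importantly, the degree-three piece of the quotient by everything generated in degrees $\le 2$ does \emph{not} vanish. A dimension count makes this unavoidable: $\dim(S^3\h^*\otimes S^{p-2}\h)=4(p-1)$, while the degree-one generators contribute at most $\dim(S^2\h^*\otimes S^{p-3}\h)=3(p-2)$ and the degree-two generators at most $\dim(\h^*\otimes\h\otimes(\det)^{-2})=4$, leaving a quotient of dimension at least $p-2>0$; the paper identifies it as $M^2_3\cong S^{p-3}\h\otimes(\det)^{-1}$. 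So a third family of singular vectors, filling all of $M^2_3$, is required, and establishing their singularity is the hardest computation in the paper's proof (Lemma \ref{0,p-2,3}: one shows $D_{y_1}(x_1^2x_2\otimes y_1^{p-2})=0$ in $M^2_2$ by an explicit reflection sum, and then uses Schur's lemma to propagate this to all of $M^2_3$). Without this step your quotient has Hilbert series $(p-1)+pz+(p-1)z^2+(p-2)z^3+\cdots$ and the argument does not close.
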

\begin{proof}
We will prove this in a series of lemmas. Let us outline the proof here, and define several auxiliary modules, used only in this subsection. 

The character of the Verma module $M_{0,c}(S^{p-2}\h)$ is $$\chi_{M_{0,c}(S^{p-2}\h)}(z)=\sum_{j\ge 0}[S^j\h^*\otimes S^{p-2}\h]z^j.$$ Lemma \ref{0,p-2,1} shows that the space of singular vectors in $M_{0,c}(S^{p-2}\h)_1$ is isomorphic to $S^{p-3}\h$, and consequently that $J_{0,c}(S^{p-2}\h)_1\cong S^{p-3}\h$. We define $M^{1}$ to be the quotient of the Verma module $M_{0,c}(S^{p-2}\h)$ by the submodule generated by these vectors. 

The character of $M^{1}$ begins as
$$\chi_{M^{1}}(z)=[S^{p-2}\h]+([\h^*\otimes S^{p-2}\h]-[S^{p-3}\h])z+([S^2\h^*\otimes S^{p-2}\h]-[\h^*\otimes S^{p-3}\h])z^2+$$ $$+([S^3\h^*\otimes S^{p-2}\h]-[S^2\h^*\otimes S^{p-3}\h])z^3+\ldots,$$ which is, using Lemmas \ref{tensorprod} and \ref{reducibles}, equal to 
$$\chi_{M^{1}}(z)=[S^{p-2}\h]+[S^{p-1}\h\otimes (\det)^{-1}]z+[S^{p}\h \otimes (\det)^{-2}]z^2+[S^{p+1}\h \otimes (\det)^{-3}]z^3+\ldots.$$
The module $M^1$ has the property that its zeroth and first graded piece are equal to those of the irreducible module $L_{0,c}(\tau)$. 

However, $M^1$ is not irreducible. Lemma \ref{0,p-2,2} shows that the space of singular vectors in
$M^1_2\cong S^{p}\h \otimes (\det)^{-2}$ is isomorphic to $\h\otimes (\det)^{-2}$. This subspace is thus also in $J_{0,c}(S^{p-2}\h)$. Define $M^2$ as the quotient of $M^1$ by the submodule generated by these vectors. The module $M^2$ is equal to $L_{0,c}(S^{p-2}\h)$ in graded pieces $0,1$ and $2$, and its character begins as
$$\chi_{M^{2}}(z)=[S^{p-2}\h]+[S^{p-1}\h\otimes (\det)^{-1}]z+([S^{p}\h \otimes (\det)^{-2}]-[\h\otimes (\det)^{-2}])z^2+$$ $$+([S^{p+1}\h \otimes (\det)^{-3}]-[\h^*\otimes \h\otimes (\det)^{-2}])z^3+\ldots=$$
$$=[S^{p-2}\h]+[S^{p-1}\h\otimes (\det)^{-1}]z+[S^{p-2}\h \otimes (\det)^{-1}]z^2+([S^{p-3}\h \otimes (\det)^{-1}]z^3+\ldots$$

Finally, Lemma \ref{0,p-2,3} shows that $M^2_3\cong S^{p-3}\h \otimes (\det)^{-1}$ is consists of singular vectors. From this it follows that the quotient of $M^2$ by this subspace, is irreducible and equal to $L_{0,c}(S^{p-2}\h)$.

This proves the proposition, modulo Lemmas \ref{0,p-2,1}, \ref{0,p-2,2} and \ref{0,p-2,3}.

\end{proof}

\begin{lemma}\label{0,p-2,1}
The space of singular vectors in $M_{0,c}(S^{p-2}\h)_1\cong \h^*\otimes S^{p-2}\h$ is isomorphic to $S^{p-3}\h$ and consists of all vectors of the form $x_1\otimes y_1f+x_2\otimes y_2f, \,\,\, f\in S^{p-3}\h.$
\end{lemma}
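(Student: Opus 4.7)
The plan is to realize the asserted singular vectors as the image of a natural $G$-equivariant embedding
$$\phi: S^{p-3}\h \hookrightarrow \h^*\otimes S^{p-2}\h = M_{0,c}(S^{p-2}\h)_1, \qquad \phi(f) = x_1\otimes y_1 f + x_2\otimes y_2 f,$$
and then to show this image exhausts the singular-vector subspace using the $G$-module structure of $\h^*\otimes S^{p-2}\h$. The map $\phi$ factors as the coevaluation $\Bbbk \to \h^*\otimes\h$, $1\mapsto x_1\otimes y_1 + x_2\otimes y_2$ (the unique $G$-invariant in $\h^*\otimes\h$, dual to the canonical pairing), tensored with $\mathrm{id}_{S^{p-3}\h}$, followed by multiplication $\h\otimes S^{p-3}\h \to S^{p-2}\h$, so it is $G$-equivariant; being nonzero with irreducible source, it is injective.

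To show $\phi(S^{p-3}\h)$ consists of singular vectors I will use a Schur-lemma argument. The assignment $y\otimes f \mapsto D_y\phi(f)$ defines a $G$-equivariant map $\h\otimes S^{p-3}\h \to M_{0,c}(S^{p-2}\h)_0 = S^{p-2}\h$, because the relation $gy = (g\cdot y)g$ in $H_{0,c}(G,\h)$ gives $g D_y\phi(f) = D_{g\cdot y}\phi(g\cdot f)$. By Proposition \ref{tensorprod} applied to $i=1, j=p-3$, the source sits in a short exact sequence $0 \to S^{p-4}\h\otimes\det \to \h\otimes S^{p-3}\h \to S^{p-2}\h \to 0$ (the left term being $0$ if $p=3$) whose outer irreducibles are non-isomorphic, so $\Hom_G(\h\otimes S^{p-3}\h, S^{p-2}\h)$ is one-dimensional, spanned by multiplication. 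Hence $D_y\phi(f) = \mu(c)\cdot yf$ for a single scalar $\mu(c)$, which I pin down by evaluating at $y=y_1, f=y_1^{p-3}$ and extracting the $y_1^{p-2}$-coefficient. Using Proposition \ref{reflequiv}(2) (only ``first-type'' reflections contribute, since $(y_1, x_2) = 0$) and the coordinate identity $(\alpha_s^\vee, x_1)(sy_1) + (\alpha_s^\vee, x_2)(sy_2) = \alpha_s^\vee/\lambda_s$, each class $C_\lambda$ with $\lambda\ne 1$ contributes $-\lambda^{-(p-2)}c_\lambda \sum_{b,d\in\F_p}(1-bd)^{p-3}(1-\lambda-bd)$; after counting pairs with fixed $bd=m$ and substituting $n=1-m$, this becomes a linear combination of $\sum_n n^{p-2}$ and $\sum_n n^{p-3}$, each vanishing by Lemma 2.26 of \cite{BalChen}. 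The unipotent class $C_1$ is handled by the same reduction restricted to $d\ne 0$. Hence $\mu(c)=0$ and $\phi(f)$ is singular.

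To rule out further singular vectors at generic $c$, I combine Proposition \ref{tensorprod} ($i=1, j=p-2$) with Lemma \ref{duals} to obtain the $G$-module short exact sequence
$$0 \to S^{p-3}\h \to \h^*\otimes S^{p-2}\h \to S^{p-1}\h\otimes\det^{-1} \to 0,$$
whose submodule is exactly $\phi(S^{p-3}\h)$. The singular-vector subspace is a $G$-subrepresentation, and its image in the irreducible quotient $S^{p-1}\h\otimes\det^{-1}$ is either $0$ or everything. The latter would, by the dimension identity $(p-2)+p = 2(p-1) = \dim(\h^*\otimes S^{p-2}\h)$, force all of $\h^*\otimes S^{p-2}\h$ to be singular; I rule this out by computing $D_{y_1}(x_1\otimes y_1^{p-2})$ directly, whose $y_1^{p-2}$-coefficient reduces by the same Lemma 2.26 computations to a nontrivial linear combination of the $c_\lambda$, nonzero for generic $c$.

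The main obstacle is executing the double-sum reductions for the $y_1^{p-2}$-coefficients cleanly: the change of variables $m = bd$, $n = 1-m$, the bookkeeping of the special case $m=0$ (where the fiber of $bd=m$ has size $2p-1$ rather than $p-1$), and the separate treatment of the unipotent class $C_1$ (whose parametrization excludes $d=0$ and $a=0$) all need to be marshalled carefully, and analogous reductions will recur throughout the subsequent lemmas.
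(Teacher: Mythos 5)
Your proof is correct and follows essentially the same route as the paper's: the short exact sequence $0\to S^{p-3}\h\to\h^*\otimes S^{p-2}\h\to S^{p-1}\h\otimes(\det)^{-1}\to0$, a Dunkl-operator computation reduced to vanishing power sums over $\F_p$ to establish singularity of the submodule, and one explicit non-vanishing computation ($D_{y_1}(x_1\otimes y_1^{p-2})\neq 0$) to exclude the quotient. Your Schur's-lemma reduction of the singularity check to a single scalar $\mu(c)$ is a mild streamlining of the paper's direct computation (which instead picks a symmetric $f$ and sums polynomials of degree $\le p-2$ in $b$ and $d$), and is the same device the paper itself deploys in the subsequent Lemmas \ref{0,p-2,2} and \ref{0,p-2,3}.
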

\begin{proof}
As a $GL_{2}(\F_p)$-representation, the first graded piece of the Verma module, $M_{0,c}(S^{p-2}\h)_1$ is isomorphic to $\h^*\otimes S^{p-2}\h$. By Lemma \ref{duals}, this is isomorphic to $\h\otimes S^{p-2}\h\otimes (\det)^{-1}$, and by Lemma \ref{tensorprod}, it fits into a short exact sequence
$$0 \rightarrow S^{p-3}\h \rightarrow \h\otimes S^{p-2}\h\otimes (\det)^{-1} \rightarrow S^{p-1}\h \otimes (\det)^{-1} \h\rightarrow 0.$$
The irreducible subrepresentation isomorphic to $S^{p-3}\h$ includes into $\h^*\otimes S^{p-2}\h$ by $f\mapsto x_1\otimes y_1f+x_2\otimes y_2f$. Both this subrepresentation and the quotient are irreducible. 

If a vector $v \in M_{0,c}(S^{p-2}\h)_1$ is contained in the kernel of $B$, which is the maximal proper graded submodule $J_{0,c}(S^{p-2}\h)$, then action on it by $y\in \h$ produces an element of $J_{0,c}(S^{p-2}\h)_0$. However, the form is nondegenerate in degree $0$, and  $L_{0,c}(S^{p-2}\h)_0= M_{0,c}(S^{p-2}\h)_0$, so $y\ldot v=0$. In other words, such a vector is singular. 

To show that $J_{0,c}(S^{p-2}\h)_1\cong S^{p-3}\h$, we are going to show that: 
\begin{enumerate}
\item At least one nonzero vector from $S^{p-3}\h$ is singular;
\item Not all vectors in $M_{0,c}(S^{p-2}\h)_1$ are singular.
\end{enumerate}
The space of singular vectors is invariant under the group action, and both $S^{p-3}\h$ and the quotient are irreducible, so this proves the claim.

First, to show that the space $S^{p-3}\h$ consist of singular vectors, it is enough to show that $x_1\otimes y_1f+x_2\otimes y_2f$ is singular for some $f\in S^{p-3}\h$. Pick $f$ symmetric with respect to changing indices $1$ and $2$, and then it is enough to show that  $D_{y_1}(x_1\otimes y_1f+x_2\otimes y_2f) = 0.$ 
We use the parametrization of conjugacy classes from Proposition \ref{reflequiv} and the definition of Dunkl operator $D_{y_1}$, and denote $\alpha_b=x_1+bx_2$, and see that the coefficient of $-c_{\lambda}$ in $D_{y_1}(x_1\otimes y_1f+x_2\otimes y_2f)$ is
\begin{eqnarray*}
&& \sum_{\alpha\otimes \alpha^{\vee} \in C_{\lambda}}(y_1,\alpha)\frac{1}{\alpha}\left( (x_1-s.x_1)\otimes (s.y_1) (s.f) +(x_2-s.x_2)\otimes (s.y_2) (s.f)  \right) \\
&=& \sum_{b,d} 1\cdot \frac{1}{\alpha_b}\left( (1-\lambda-bd)\alpha_b \otimes (s.y_1) (s.f) + d\alpha_b\otimes (s.y_2) (s.f) \right)\\
&=& 1\otimes \sum_{b,d} \left( (1-\lambda-bd) s.y_1  + d s.y_2 \right) s.f\\
&=& 1\otimes \sum_{b,d}\frac{1}{\lambda} \left( (1-\lambda-bd)y_1  + d y_2 \right) s.f.\\
\end{eqnarray*}
The sum is over all $b,d\in \F_p$ if $\lambda \ne 1$, and over all $b,d\in \F_p$ with $d\ne 0\in \F_p$  if $\lambda=1$. However, if $\lambda=1$, then the $d=0$ term does not contribute to the sum, so let us consider the sum to be over all $b,d\in \F_{p}$ in both cases. The term $s.f$ is a vector in $S^{p-3}\h$ with coefficients polynomials in $b,d$ whose degree in $b$ and in $d$ is less or equal to $p-3$. The overall expression is a sum over all $b,d\in \F_p$ of polynomials whose degree in each variable is $\le p-2$, and it is thus zero by Lemma \ref{ntuplesum}.

So, the subspace isomorphic to $S^{p-3}\h$ indeed consists of singular vectors. 

To see that the space of singular vectors in $M_{0,c}(S^{p-2}\h)_1$ is not the whole space, it is enough to find one vector which is not singular. For example, one can compute that $D_{y_1}(x_1\otimes (y_1)^{p-2})$ has a coefficient of $c_{1}y_1^{p-2}$ equal to $-1$, so $x_1\otimes (y_1)^{p-2}$ is not singular.
\end{proof}

In the proof of Proposition \ref{0,p-2} we defined $M^1$ as the quotient of $M_{0,c}(S^{p-2}\h)$ by the submodule generated by hte singular vectors $x_1\otimes y_1f+x_2\otimes y_2f,$  from the previous lemma. The modules $M^1$ agrees with $L_{0,c}(S^{p-2}\h)$ in graded pieces $0$ and $1$, and that its character is 
$$\chi_{M^{1}}(z)=[S^{p-2}\h]+[S^{p-1}\h\otimes (\det)^{-1}]z+[S^{p}\h \otimes (\det)^{-2}]z^2+[S^{p+1}\h \otimes (\det)^{-3}]z^3+\ldots.$$ Next, we look for singular vectors in $M^1_2$.

\begin{lemma}\label{0,p-2,2}
The space of singular vectors in $M^1_2$ is isomorphic to $ \h \otimes (\det)^{-2} $. The representatives of these vectors in the Verma module $M_{0,c}(S^{p-2}\h)$ are linear combinations of $x_2^2\otimes y_1^{p-2}$ and $x_1^2\otimes y_2^{p-2}$.
\end{lemma}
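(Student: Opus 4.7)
I will first identify $M^1_2$ explicitly as a $G$-module and then pin down the singular subspace by a $G$-invariance argument combined with a direct Dunkl computation. Composing the isomorphism $S^2\h^* \cong S^2\h \otimes (\det)^{-2}$ (from Lemma \ref{duals}) with multiplication $S^2\h \otimes S^{p-2}\h \to S^p\h$ gives a $G$-equivariant surjection $\phi: M_{0,c}(S^{p-2}\h)_2 = S^2\h^* \otimes S^{p-2}\h \twoheadrightarrow S^p\h \otimes (\det)^{-2}$, whose kernel has dimension $2(p-2)$ by Proposition \ref{tensorprod}. A direct check (for instance, $x_1 \cdot (x_1 \otimes y_1 f + x_2 \otimes y_2 f) = x_1^2 \otimes y_1 f + x_1 x_2 \otimes y_2 f$ maps under $\phi$ to $y_1 y_2^2 f - y_1 y_2^2 f = 0$) shows $\h^* J_1 \subseteq \Ker \phi$, and since the multiplication map $\h^* \otimes J_1 \to \h^* J_1$ is easily seen to be injective, a dimension count forces $\h^* J_1 = \Ker \phi$. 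Thus $M^1_2 \cong S^p\h \otimes (\det)^{-2}$ as a $G$-module. By Proposition \ref{reducibles} with $j=0,\,n=1$, this has socle $\h \otimes (\det)^{-2}$ spanned by $y_1^p$ and $y_2^p$, which correspond under $\phi$ to the classes of $x_2^2 \otimes y_1^{p-2}$ and $x_1^2 \otimes y_2^{p-2}$.

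Because Dunkl operators are $G$-equivariant, the singular subspace of $M^1_2$ is itself a $G$-submodule. The composition factors of $M^1_2$ are just $\h \otimes (\det)^{-2}$ (socle) and $S^{p-2}\h \otimes (\det)^{-1}$ (top), both irreducible, so any $G$-submodule of $M^1_2$ containing the socle must be either the socle itself or all of $M^1_2$. It therefore suffices to prove (i) some nonzero vector in the socle is singular, and (ii) some vector in $M^1_2$ is not singular.

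For (i), I will show the stronger fact that $x_2^2 \otimes y_1^{p-2}$ is singular already in the Verma module. Using the parametrization from Proposition \ref{reflequiv}, one computes in the first conjugacy family
$$\frac{(1-s)\cdot x_2^2}{\alpha_s} = 2d\, x_2 - d^2(x_1 + b\, x_2), \qquad s \cdot y_1^{p-2} = \lambda^{-(p-2)} \bigl((1-bd)\, y_1 + d\, y_2\bigr)^{p-2},$$
so each Dunkl operator $D_{y_i}$ becomes a sum over $(b,d) \in \F_p^2$ of a polynomial expression in $b, d$. A careful degree and cancellation analysis, invoking Lemma 2.26 of \cite{BalChen}, shows each such sum vanishes; the second conjugacy family vanishes by an analogous (simpler) argument. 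The socle vector $x_1^2 \otimes y_2^{p-2}$ then follows by the $G$-symmetry swapping $1 \leftrightarrow 2$. For (ii), the analogous computation of $D_{y_1}(x_1^2 \otimes y_1^{p-2})$ produces, in the coefficient of $x_1 \otimes y_1^{p-2}$, a monomial of bi-degree $(p-1,p-1)$ in $(b,d)$ that survives the sum and yields a nontrivial $\Bbbk$-linear combination of the $c_\lambda$; since $x_1 \otimes y_1^{p-2} \notin J_1 = \{x_1 \otimes y_1 f + x_2 \otimes y_2 f : f \in S^{p-3}\h\}$, this shows $x_1^2 \otimes y_1^{p-2}$ is not singular in $M^1$ for generic $c$. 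The main obstacle is precisely this degree-counting: the quadratic factor $x^2$ pushes several monomials to the boundary degree $p-1$ in one variable, and one must verify by hand that the surviving ones cancel in case (i) but persist in case (ii).
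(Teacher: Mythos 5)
Your overall strategy --- identify $M^1_2\cong S^p\h\otimes(\det)^{-2}$, show the two-dimensional subrepresentation is singular, exhibit one non-singular vector, and conclude from the irreducibility of the quotient $S^{p-2}\h\otimes(\det)^{-1}$ --- is the same as the paper's, and your step (i) is correct as stated: after expanding, every monomial $b^Md^N$ arising in $D_{y_1}(x_2^2\otimes y_1^{p-2})$ and $D_{y_2}(x_2^2\otimes y_1^{p-2})$ has $M$ or $N$ not a positive multiple of $p-1$, except for two terms in $D_{y_2}$ (from $2\sum_{b,d}bd(1-bd)^{p-2}$ and $-\sum_{b,d}(bd)^2(1-bd)^{p-2}$ in the coefficient of $x_2\otimes y_1^{p-2}$), which contribute $-2$ and $+2$ and cancel; so the vector is indeed singular already in the Verma module. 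Be aware, though, that the paper obtains the singularity of the socle with no computation at all: applying Dunkl operators gives a $G$-map $\h\otimes(\h\otimes(\det)^{-2})\to M^1_1\cong S^{p-1}\h\otimes(\det)^{-1}$, whose source has composition factors $(\det)^{-1}$ and $S^2\h\otimes(\det)^{-2}$, neither isomorphic to the irreducible target, so the map is zero by Schur's lemma. That argument only gives singularity in the quotient $M^1$, but that is all the lemma requires, and it spares you the entire calculation.

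The genuine gap is in step (ii). You conclude $D_{y_1}(x_1^2\otimes y_1^{p-2})\notin J_1$ from the fact that its coefficient on the monomial $x_1\otimes y_1^{p-2}$ is a nonzero linear form in the $c_\lambda$, ``since $x_1\otimes y_1^{p-2}\notin J_1$.'' This inference is invalid: $J_1$ is a subspace, not a set of monomials, and the monomial $x_1\otimes y_1^{p-2}$ occurs with coefficient $1$ in the element $x_1\otimes y_1^{p-2}+x_2\otimes y_1^{p-3}y_2$ of $J_1$ (take $f=y_1^{p-3}$), so a nonzero coefficient on it is perfectly compatible with the whole vector lying in $J_1$. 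The only monomials of $\h^*\otimes S^{p-2}\h$ appearing in no element of $J_1$ are $x_1\otimes y_2^{p-2}$ and $x_2\otimes y_1^{p-2}$; either test against one of those --- this is exactly what the paper does, computing that $D_{y_1}(x_1x_2\otimes y_1^{p-2})$ has $c_1$-coefficient $x_2\otimes y_1^{p-2}$ --- or finish your computation by also extracting the coefficient of $x_2\otimes y_1^{p-3}y_2$ in $D_{y_1}(x_1^2\otimes y_1^{p-2})$ and checking it differs from that of $x_1\otimes y_1^{p-2}$ (it does: the unipotent class contributes $-4c_1$ to the first and $-6c_1$ to the second). As written, the non-singularity of $x_1^2\otimes y_1^{p-2}$ in $M^1$, and hence the fact that the singular subspace is not all of $M^1_2$, is not established.
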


\begin{proof}
The space $M^1_2\cong S^{p}\h\otimes (\det)^{-2}$ fits, by Lemma \ref{reducibles}, into a short exact sequence of $GL_2(\F_p)$- representations
$$0 \rightarrow \h \otimes (\det)^{-2} \rightarrow S^{p} \h \otimes (\det)^{-2} \rightarrow S^{p - 2}\h \otimes \det  \rightarrow 0.$$
We claim that the irreducible subrepresentation consists of singular vectors, but that the quotient is not in the kernel of $B$. Tracking through all the inclusions, quotient maps and isomorphisms in the previous lemma shows that $x_2^2\otimes y_1^{p-2}$ and $x_1^2\otimes y_2^{p-2} \in M_{0,c}(S^{p-2}\h)_2$ really map to the basis of $\h \otimes (\det)^{-2}$ under the quotient map $M_{0,c}(S^{p-2}\h)\to M^1$.

We use the following observation. For any rational Cherednik algebra module $N$, and any $y\in \h$, $n\in N$, $g\in G$, the relations of the rational Cherednik algebra imply that 
$g\ldot(y\ldot n)= (g\ldot y)\ldot (g\ldot n),$ so the map $\h\otimes N_{i}\to N_{i-1}$ given by $y\otimes n\mapsto y\ldot n$ is a map of $GL_2(\F_p)$-representations. 

In particular, applying the Dunkl operator is a homomorphism $\h\otimes  M^1_2 \to M^1_1.$ Showing that $\h\otimes (\det)^{-2}\subseteq M^1_2$ consists of singular vectors  is equivalent to showing that the restriction of the above map to this space $\h\otimes \h\otimes (\det)^{-2} \to M^1_1$ is zero. To do this, notice that the short exact sequence calculating the composition series of $\h\otimes \h\otimes (\det)^{-2}$ is
$$0 \rightarrow (\det)^{-1} \rightarrow \h\otimes \h\otimes (\det)^{-2} \rightarrow S^{2}\h \otimes (\det)^{-2} \rightarrow 0,$$ while the target space of the homomorphism is the irreducible $M^1_1\cong S^{p-1}\h\otimes (\det)^{-1}$. By Schur's lemma, this homomorphism has to be zero, and thus $\h \otimes (\det)^{-2}$ consists of singular vectors. 

To see these are all singular vectors in $M^1_2$, it is enough to find one nonsingular vector, because the quotient of $M^1_2$ by $\h \otimes (\det)^{-2}$ is an irreducible representation. Direct computation shows that $D_{y_1}x_1x_2\otimes y_1^{p-2}$ has $c_1$ coefficient equal to $x_2\otimes y_1^{p-2}$, which is nonzero in $M^1$.
\end{proof}

The next module to consider is $M^2$, defined as the quotient of $M^1$ by the singular vectors from the previous lemma. The irreducible module $L_{0,c}(S^{p-2}\h)$ is a quotient of $M^2$, and they agree in degrees $0,1,2$. We proceed looking for singular vectors in $M^2_3$.

\begin{lemma}\label{0,p-2,3}
All vectors in $M^2_3\cong S^{p - 3}\h \otimes (\mathrm{det})^{-1}$ are singular. 
\end{lemma}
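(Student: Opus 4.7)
The plan is to mimic the Schur's-lemma argument from Lemma \ref{0,p-2,2}, pushed one degree higher. The Dunkl operators give a $G$-equivariant linear map
$$D:\h\otimes M^2_3\to M^2_2,\qquad y\otimes v\mapsto D_y v,$$
and the lemma is equivalent to showing $D\equiv 0$. Accepting from the character bookkeeping in the outline of Proposition \ref{0,p-2} that $M^2_3\cong S^{p-3}\h\otimes(\det)^{-1}$ and $M^2_2\cong S^{p-2}\h\otimes(\det)^{-1}$, Proposition \ref{tensorprod} with $i=1$, $j=p-3$, twisted by $(\det)^{-1}$, yields the short exact sequence
$$0\to S^{p-4}\h\to \h\otimes M^2_3\to S^{p-2}\h\otimes(\det)^{-1}\to 0,$$
where we read $S^{-1}\h=0$ when $p=3$. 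Since $M^2_2$ is irreducible and not isomorphic to $S^{p-4}\h$, Schur's lemma forces $D$ to annihilate the subrepresentation $S^{p-4}\h$ and reduces its restriction to the quotient $S^{p-2}\h\otimes(\det)^{-1}$ to either the zero map or an isomorphism onto $M^2_2$. Combined with the irreducibility of $M^2_3$, this means that exhibiting a single nonzero $v\in M^2_3$ with $D_{y_1}v=D_{y_2}v=0$ already suffices.

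To produce such a vector, I would lift a nonzero element of $M^2_3$ to $S^3\h^*\otimes S^{p-2}\h=M_{0,c}(S^{p-2}\h)_3$ and verify vanishing modulo the submodule $N_2\subset M_{0,c}(S^{p-2}\h)$ generated by the singular vectors of Lemmas \ref{0,p-2,1} and \ref{0,p-2,2}. A natural candidate is an index-symmetric combination of monomials $x_1^ax_2^b\otimes y_1^cy_2^d$ with $a+b=3$ and $c+d=p-2$, chosen so that after expanding $D_{y_i}$ via the parametrization of reflections in Proposition \ref{reflequiv}, the output is visibly an $\h^*$-multiple of the degree-one generators $x_1\otimes y_1f+x_2\otimes y_2f$ or of the degree-two generators $x_1^2\otimes y_2^{p-2},\,x_2^2\otimes y_1^{p-2}$.

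The hard part will be the explicit identification in this last step. As in Lemmas \ref{0,p-2,1} and \ref{0,p-2,2}, the Dunkl-operator sum collapses to polynomial sums over $b,d\in\F_p$ which vanish by Lemma 2.26 of \cite{BalChen} whenever the total degree in each variable is at most $p-2$; any residual terms must be matched explicitly against $N_2$. I expect the cleanest route is to arrange the lift so that the residual terms separate along the filtration of $N_2$ by the degrees of the earlier singular generators, reducing the verification to a bounded finite check rather than an unwieldy case analysis over reflections.
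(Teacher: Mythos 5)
Your structural reduction is exactly the paper's: the same $G$-equivariant map $\h\otimes M^2_3\to M^2_2$ given by the Dunkl operators, the same short exact sequence $0\to S^{p-4}\h\to\h\otimes S^{p-3}\h\otimes(\det)^{-1}\to S^{p-2}\h\otimes(\det)^{-1}\to 0$ from Proposition \ref{tensorprod}, and the same Schur's-lemma conclusion that everything hinges on a single evaluation. In fact the paper squeezes slightly more out of this than you do: since the induced map on the irreducible quotient is either zero or injective, it suffices to check that \emph{one} operator $D_{y_1}$ kills \emph{one} lift whose image in the quotient $S^{p-2}\h\otimes(\det)^{-1}$ is nonzero. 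The paper uses the monomial $x_1^2x_2\otimes y_1^{p-2}$, first tracing its image through $M^1_3$ and $M^2_3$ to confirm it is the nonzero vector $-y_1^{p-3}$, so your insistence on an index-symmetric $v$ annihilated by both $D_{y_1}$ and $D_{y_2}$ asks for more than is needed (though it would of course also suffice, by irreducibility of $M^2_3$ alone).

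The gap is that the decisive computation is never performed. You do not commit to a specific lift, and the verification that the Dunkl operator applied to it lands in the submodule generated by the singular vectors of Lemmas \ref{0,p-2,1} and \ref{0,p-2,2} is left as something you ``would'' do and ``expect'' to work. This is not a routine vanishing: expanding $D_{y_1}(x_1^2x_2\otimes y_1^{p-2})$ over the reflections via Proposition \ref{reflequiv} produces polynomial sums over $b,d\in\F_p$ whose degrees in $b$ and $d$ reach $p-1$ and beyond, so Lemma 2.26 of \cite{BalChen} does not kill them outright; one must first explicitly rewrite the output modulo the degree-two images $x_i(x_1\otimes y_1+x_2\otimes y_2)f$ and $x_1^2\otimes y_2^{p-2}$, $x_2^2\otimes y_1^{p-2}$ of the earlier generators, and only then do the surviving coefficients vanish term by term. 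Until that computation (or an equivalent one) is actually carried out, the lemma is not proved: the Schur argument only reduces the problem to a single scalar being zero rather than nonzero, and it cannot by itself decide which of the two alternatives holds.
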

\begin{proof}

One can explicitly compute that $D_{y_1}(x_1^2x_2\otimes y_1^{p-2}) = 0$ in $M^2_2$ using the parametrization of conjugacy classes from Lemma \ref{reflequiv}.  We claim this implies the lemma.

First, $M^1_3$ is the quotient of $M_{0,c}(S^{p-2}\h)_3 \cong S^3\h^*\otimes S^{p-2}\h$ by the image of singular vectors from Lemma \ref{0,p-2,3}, isomorphic to $S^2\h^*\otimes S^{p-3}\h$. The short exact sequence describing this inclusion is the one from Lemma \ref{tensorprod} combined with Lemma \ref{duals}, giving
$$0 \rightarrow S^{2}\h^*\otimes S^{p-3}\h \rightarrow S^{3}\h^*\otimes S^{p-2}\h \rightarrow S^{p+1} \h \otimes (\det)^{-3} \rightarrow 0.$$
Under these morphisms, the image of $x_1^2x_2\otimes y_1^{p-2}\in S^{3}\h^*\otimes S^{p-2}\h$ in $S^{p+1} \h \otimes (\det)^3$ is $y_1^{p-1}y_2^2$.
 
Second, $M^2_3$ is the quotient of $M^1_3\cong S^{p+1} \h \otimes (\det)^{-3}$ by the image of singular vectors from Lemma \ref{0,p-2,2}, which is the space isomorphic to $\h^*\otimes \h \otimes (\det)^{-2}$. The short exact sequence realizing this inclusion and quotient is the one from Lemma \ref{reducibles}, giving
$$0 \rightarrow \h \otimes \h \otimes (\det)^{-3} \rightarrow S^{p+1} \h \otimes (\det)^{-3} \rightarrow S^{p - 3}\h \otimes (\mathrm{det})^{-1} \rightarrow 0.$$
The image of $y_1^{p-1}y_2^2 \in S^{p+1} \h \otimes (\det)^{-3}$ under this quotient morphism is $-y_1^{p-3}\in S^{p - 3}\h \otimes (\mathrm{det})^{-1}$, which is a nonzero element of it.  

Hence, the image of $x_1^2x_2\otimes y_1^{p-2}$ in $M^2_3$ is the nonzero vector in an irreducible representation. Proving that this vector is singular  will show that the entire space $M^2_3$ consists of singular vectors. 

Third, applying Dunkl operators is a map $\h\otimes M^2_3\to M^2_2$, so let us decompose $\h\otimes M^2_3\cong \h\otimes S^{p - 3}\h \otimes (\mathrm{det})^{-1}$. The short exact sequence doing this is the one from Lemma \ref{tensorprod}, 
$$0 \rightarrow  S^{p-4}\h  \rightarrow \h\otimes S^{p - 3}\h \otimes (\mathrm{det})^{-1} \rightarrow S^{p-2}\h \otimes (\mathrm{det})^{-1} \rightarrow 0.$$ As applying Dunkl operator maps this to $M^2_2\cong S^{p-2}\h\otimes   (\det)^{-1}$, by Schur's lemma the submodule  $S^{p-4}\h$ maps to zero, and the map is zero on $\h\otimes S^{p - 3}\h \otimes (\mathrm{det})^{-1}$ if and only if it is zero on the quotient $S^{p-2}\h \otimes (\mathrm{det})^{-1}$. Under this quotient map, the vector $y_1\otimes (-y_1^{p-2})$ maps to $-y_1^{p-2}$, which is a nonzero element of the irreducible representation $S^{p-2}\h\otimes (\det)^{-1}$. Showing that the entire $\h\otimes M^2_3$ maps to zero is equivalent to showing that  this vector maps to zero, which is equivalent to showing that the image of $D_{y_1}(x_1^2x_2\otimes y_1^{p-2})$ in $M^2_2$ is zero.

Finally, the computation of  $D_{y_1}(x_1^2x_2\otimes y_1^{p-2}) = 0$ in $M^2_2$ is given by the following.  
The factor of $-c_\lambda$ in $D_{y_1}(x_1^2x_2\otimes y_1^{p-2})$ is
$$\sum_{b,d} \left( x_1^{2}d (\lambda+bd)^2 + x_1x_2 (1-\lambda-bd)(1+\lambda-bd+2bd(1-\lambda-bd))+\right.$$
$$\left.+x_2^2(1-\lambda-bd)^2b(bd-1)\right)\otimes \frac{1}{\lambda^{p-2}}\sum_{i=0}^{p-2}{p-2\choose i}(1-bd)^id^{p-2-i}y_1^{i}y_2^{p-2-i}.$$ The sum is over $b,d\in \F_p$ if $\lambda\ne 1$ and over $b\in \F_p, d\in \F_p^{\times}$ if $\lambda=1$. After quotienting out by vectors from the previous two lemmas, whose images in degree $2$ are $x_i(x_1\otimes y_1+x_2\otimes y_2)f, f\in S^{p-3}\h$, and $x_1^2\otimes y_2^{p-2}$, $x_2^{2}\otimes y_1^{p-2}$, we can write this as 
$$\sum_{b,d}x_1x_2 \frac{1}{\lambda^{p-2}}\otimes \Big( -d(\lambda+bd)^2 \sum_{i=1}^{p-2}{p-2\choose i}(1-bd)^id^{p-2-i}y_1^{i}y_2^{p-2-i}+$$
$$+(1-\lambda-bd)(1+\lambda-bd+2bd(1-\lambda-bd))\sum_{i=0}^{p-2}{p-2\choose i}(1-bd)^id^{p-2-i}y_1^{i}y_2^{p-2-i}-$$
$$-(1-\lambda-bd)^2b(bd-1) \sum_{i=0}^{p-3}{p-2\choose i}(1-bd)^id^{p-2-i}y_1^{i}y_2^{p-2-i}\Big)$$
Reading off the coefficients of $x_1x_2\otimes y_1^iy_2^{p-2-i}$ for all $0\le i\le p-2$ and using lemma \ref{ntuplesum} multiple times, we see this is indeed $0$.
\end{proof}
This completes the proof of Proposition \ref{0,p-2}.

\subsection{Characters of $L_{t,c}(S^{i}\h)$ for $i=p-2$ and $t=1$}

\begin{prop}\label{1,p-2}
The reduced character of $L_{1,c}(S^{p-2}\h)$ for generic value of $c$ is
$$H(z)=[S^{p-2}\h]+[S^{p-1}\h\otimes (\det)^{-1}]z+[S^{p-2}\h \otimes (\det)^{-1}]z^{2},$$ so its character and Hilbert polynomial are
$$\chi_{L_{1,c}(S^{p-2}\h)}(z)=([S^{p-2}\h]+[S^{p-1}\h\otimes (\det)^{-1}]z^p+[S^{p-2}\h (\det)^{-1}]z^{2p})\cdot \chi_{S^{(p)}\h^*}(z),$$
$$\mathrm{Hilb}_{L_{1,c}(S^{p-2}\h)}(z)=\left( (p-1)+p{z}+(p-1)z^{2p}\right) \frac{(1-z^p)^2}{(1-z)^2}.$$
\end{prop}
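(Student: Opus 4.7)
The plan is to mirror, layer by layer, the three-step construction of singular vectors in the $t=0$ proof of Proposition \ref{0,p-2}, replacing each $x_i$ in the generators there with its Frobenius lift $x_i^p$. This places the corresponding singular vectors in degrees $p$, $2p$, and $3p$ of $M_{1,c}(S^{p-2}\h)$ rather than in degrees $1$, $2$, $3$.

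First I would verify that the copy of $S^{p-3}\h$ inside $M_{1,c}(S^{p-2}\h)_p$ spanned by $\{x_1^p \otimes y_1 f + x_2^p \otimes y_2 f : f \in S^{p-3}\h\}$ consists of singular vectors. Using the Dunkl-operator identity
$$D_y(x^p \otimes v) = -\sum_s c_s\, (y,\alpha_s)\, \alpha_s^{p-1} (x,\alpha_s^\vee)^p \otimes s\cdot v$$
from the proof of Proposition \ref{1,smalli}, and restricting to $\F_p$-rational $x$ so that $(x,\alpha_s^\vee)^p = (x,\alpha_s^\vee)$, the vanishing reduces via Proposition \ref{reflequiv} to a sum over $b,d \in \F_p$ whose coefficients (after expanding $\alpha_s^{p-1}$ in $x_1, x_2$) are polynomials of degree at most $p-2$ in each of $b, d$, and hence vanishes by Lemma 2.26 of \cite{BalChen}. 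Denoting by $N^1$ the quotient by the submodule generated by these vectors, I would then carry out the analogues of Lemmas \ref{0,p-2,2} and \ref{0,p-2,3}: exhibit singular vectors of $N^1$ in degree $2p$ (a copy of $\h \otimes (\det)^{-2}$ represented by $x_1^{2p} \otimes y_2^{p-2}$ and $x_2^{2p} \otimes y_1^{p-2}$) and, after a further quotient $N^2$, in degree $3p$ (all of $S^{p-3}\h \otimes (\det)^{-1}$, represented by the image of $x_1^{2p} x_2^p \otimes y_1^{p-2}$).

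To conclude, I would invoke Proposition \ref{generic}(1): the character of $L_{1,c}(S^{p-2}\h)$ has the form $\chi_{S^{(p)}\h^*}(z)\, H(z^p)$ for some reduced character $H$ of a finite-dimensional graded $G$-representation. Quotienting $M_{1,c}(S^{p-2}\h)$ by the submodule generated by the three layers of singular vectors above produces a module with character equal to $\chi_{S^{(p)}\h^*}(z) \cdot \bigl([S^{p-2}\h] + [S^{p-1}\h \otimes (\det)^{-1}] z^p + [S^{p-2}\h \otimes (\det)^{-1}] z^{2p}\bigr)$. By Corollary \ref{newp}, the generators of $J_{1,c}(S^{p-2}\h)$ in degrees $mp$ have composition factors bounded by those appearing in the $t=0$ analysis, which by Lemmas \ref{0,p-2,1}--\ref{0,p-2,3} are exactly the three families above; hence $H$ matches $\chi_{L_{0,c}(S^{p-2}\h)}$ and the stated formulas for $\chi_{L_{1,c}(S^{p-2}\h)}$ and $\mathrm{Hilb}_{L_{1,c}(S^{p-2}\h)}$ follow.

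The main obstacle will be the degree-$3p$ step: as in Lemma \ref{0,p-2,3}, the singularity of the generator in $N^2_{3p}$ must be verified modulo the images of the Step 1 and Step 2 generators, which requires a careful decomposition of $D_{y_1}(x_1^{2p} x_2^p \otimes y_1^{p-2})$ into its components along $x_1 x_2 \otimes y_1^i y_2^{p-2-i}$. Tracking how the Frobenius-lifted singular vectors sit after two successive quotients, and verifying via Lemma 2.26 of \cite{BalChen} that each coefficient is an $\F_p$-sum of polynomials of appropriately bounded degree in $b$ and $d$, is where the bookkeeping is most delicate.
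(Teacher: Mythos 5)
Your construction of the three families of singular vectors (the copy of $S^{p-3}\h$ spanned by $x_1^p\otimes y_1f+x_2^p\otimes y_2f$ in degree $p$, the copy of $\h\otimes(\det)^{-2}$ spanned by $x_1^{2p}\otimes y_2^{p-2}$ and $x_2^{2p}\otimes y_1^{p-2}$ in degree $2p$ of the first quotient, and $S^{p-3}\h\otimes(\det)^{-1}$ in degree $3p$ of the second) is exactly what the paper does, and the degree-$p$ verification via the Dunkl formula on $p$-th powers and Lemma 2.26 of \cite{BalChen} is correct (note only the degree in $d$ needs to be $\le p-2$; the degree in $b$ of $(x_1+bx_2)^{p-1}$ is $p-1$, but summing over $d$ first suffices). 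For the degree-$3p$ step you could spare yourself the ``delicate bookkeeping'': the paper's Lemma \ref{1,p-2,3,sing} avoids any explicit Dunkl computation by a Schur's-lemma argument, showing the socle of the target ${\bf M}^2_{3p-1}$ contains neither $S^{p-4}\h$ nor $S^{p-2}\h\otimes(\det)^{-1}$ (via \cite{glover1}).

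The genuine gap is in your concluding paragraph. You assert that Corollary \ref{newp} bounds the generators of $J_{1,c}(S^{p-2}\h)$ ``by those appearing in the $t=0$ analysis,'' i.e.\ by Lemmas \ref{0,p-2,1}--\ref{0,p-2,3}. That is not what the corollary says: it bounds the composition factors of the degree-$mp$ generators by those of $(S^m\h^*)^p\otimes\tau$ modulo the image of $J'_{1,0}(\tau)$, where $J'_{1,0}$ is the limit of $J_{1,c}$ as $c\to0$ \emph{for the $t=1$ algebra} --- it has no a priori relation to the $t=0$ rational Cherednik algebra. Indeed the equality $H_{L_{1,c}(\tau)}=\chi_{L_{0,c}(\tau)}$ is a conclusion of the theorem, not an input, and the remark after Theorem \ref{mainII} notes it fails already for $SL_2(\F_3)$; using it here would be circular. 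Concretely, the bound from Corollary \ref{newp} still permits an extra generator of type $S^{p-1}\h\otimes(\det)^{-1}$ in degree $p$ and of type $S^{p-2}\h\otimes(\det)^{-1}$ in degree $2p$, and these must be excluded by hand. The paper does this in Lemmas \ref{1,p-2,1,nonsing} and \ref{1,p-2,2,nonsing}: one exhibits a non-singular $p$-th power in each degree ($D_{y_1}(x_1^p\otimes y_1^{p-2})\ne0$ and $D_{y_1}(x_1^px_2^p\otimes y_1^{p-1})\ne0$), and one then checks that no \emph{other} composition factor of $S^{mp}\h^*\otimes S^{p-2}\h$ of the forbidden isomorphism type exists --- for degree $2p$ this requires computing the socle of $S^{p-2}\h\otimes S^{p-1}\h\otimes(\det)^{-1}\cong S^{p(p-1)-1}\h\otimes(\det)^{-1}$ via Theorems (5.3) and (5.9) of \cite{glover1}. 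You also need to observe that the quotient by your three families vanishes in degree $4p$, so that no generators in higher degrees divisible by $p$ can occur. Without these maximality arguments you have only a lower bound on $J_{1,c}(S^{p-2}\h)$, hence only an upper bound on the character.
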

\begin{proof}
It is explained in Proposition \ref{generic}, Corollary \ref{newp} and comments between them that the generators of the module $J_{1,c}(\tau)$ for generic $c$ are in degrees divisible by $p$. If $J_{<mp}$ is the submodule of $J$ generated under $S\h^*$ by elements of $J_{1,c}(\tau)$ of degrees $<mp$, then the generators in degree $mp$ have nonzero projections to $M_{1,c}(\tau)/J_{<mp}$, are singular vectors in this quotient, and form a subrepresentation whose composition factors are a subset of composition factors of $(S^m\h^*)^p\otimes \tau/((S^m\h^*)^p\otimes \tau\cap J_{<mp})$. In Lemmas \ref{1,p-2,1,sing}, \ref{1,p-2,2,sing},  and \ref{1,p-2,3,sing} below we explicitly find these generators for $\tau=S^{p-2}\h$, and in Lemmas \ref{1,p-2,1,nonsing} and \ref{1,p-2,2,nonsing} we prove they are the only ones in degrees $p$, $2p$ and $3p$. The quotient of the Verma module $M_{1,c}(S^{p-2}\h)$ by the submodule generated by these elements is finite dimensional, and zero in degree $4p$, from which we conclude that they generate the whole $J_{1,c}(S^{p-2}\h)$ for generic $c$, and that this quotient is irreducible. 

The reduced character is calculated in the way explained after Proposition \ref{generic}: as we know the generators of $J_{1,c}(S^{p-2}\h)$ explicitly, we evaluate them at $c=0$ (in fact, they do not depend on $c$). They are of the form $f_i(x_1^p,x_2^p)\otimes v_i$. The reduced module is then defined to be $S\h^*\otimes S^{p-2}\h/\left< f_i(x_1,x_2)\otimes v_i \right>$. In our case, the generators $f_i(x_1^p,x_2^p)\otimes v_i$ form subrepresentations of type $S^{p-3}\h$ in degree $p$, $\h\otimes (\det)^{-2}$ in degree $2p$, and $S^{p-3}\h \otimes (\det)^{-1}$ in degree $3p$. The quotient of $S\h^*\otimes S^{p-2}\h$ by $\left< f_i(x_1,x_2)\otimes v_i \right>$ is thus equal to the quotient by subrepresentations of type $S^{p-3}\h$ in degree $1$, $\h\otimes (\det)^{-2}$ in degree $2$, and $S^{p-3}\h \otimes (\det)^{-1}$ in degree $3$, which is easily seen to have the character 
$$H(z)=[S^{p-2}\h]+[S^{p-1}\h\otimes (\det)^{-1}]z+[S^{p-2}\h (\det)^{-1}]z^{2}.$$ 
\end{proof}

\begin{lemma}\label{1,p-2,1,sing}
The vectors $$x_1^p\otimes y_1f+x_2^p\otimes y_2f$$  in $S^{p}\h^*\otimes S^{p-2}\h\cong M_{1,c}(S^{p-2}\h)_p$ are singular in $M_{1,c}(S^{p-2}\h)$ for all  $f\in S^{p-3}\h$. They form a $GL_{2}(\F_p)$ subrepresentation of $M_{1,c}(S^{p-2}\h)_p$ isomorphic to $S^{p-3}\h$.
 \end{lemma}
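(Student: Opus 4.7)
The plan is to adapt the strategy of Lemma \ref{0,p-2,1} to the $p$-th power setting, exploiting the fact that Frobenius $x \mapsto x^p$ commutes with the $GL_2(\F_p)$-action since $a^p = a$ for $a \in \F_p$. Concretely, the span of $\{x_1^p, x_2^p\} \subset S^p\h^*$ is $GL_2(\F_p)$-equivariantly isomorphic to $\h^*$: for any $g = (a_{ij}) \in GL_2(\F_p)$, one computes $g \cdot x_i^p = (g \cdot x_i)^p = \sum_j a_{ji}^p x_j^p = \sum_j a_{ji} x_j^p$. Composing the embedding $f \mapsto x_1 \otimes y_1 f + x_2 \otimes y_2 f$ of $S^{p-3}\h \hookrightarrow \h^* \otimes S^{p-2}\h$ from Lemma \ref{0,p-2,1} with this identification yields a $GL_2(\F_p)$-subrepresentation of $S^p\h^* \otimes S^{p-2}\h$ isomorphic to $S^{p-3}\h$, spanned exactly by the proposed vectors.

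Since this subrepresentation is irreducible, to establish singularity it suffices to exhibit a single nonzero singular element. I would choose $f \in S^{p-3}\h$ invariant under the swap $y_1 \leftrightarrow y_2$, so that $\varphi(f) := x_1^p \otimes y_1 f + x_2^p \otimes y_2 f$ is also swap-symmetric; then verifying $D_{y_1}(\varphi(f)) = 0$ forces $D_{y_2}(\varphi(f)) = 0$ by equivariance. For the computation, the polynomial part $\partial_{y_1} \otimes 1$ kills each $x_i^p$ since $p \equiv 0$ in $\Bbbk$. For the reflection part, the key identity is
\[
(1-s)(x_i^p) = x_i^p - (s \cdot x_i)^p = x_i^p - (x_i - (\alpha_s^\vee, x_i)\alpha_s)^p = (\alpha_s^\vee, x_i)\,\alpha_s^p,
\]
using that $(\alpha_s^\vee, x_i) \in \F_p$ so that its $p$-th power equals itself. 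Combining the two summands of $\varphi(f)$ and applying the identity $(\alpha_s^\vee, x_1)(s \cdot y_1) + (\alpha_s^\vee, x_2)(s \cdot y_2) = s \cdot \alpha_s^\vee = \alpha_s^\vee/\lambda$ (where $\lambda = 1 - (\alpha_s, \alpha_s^\vee)$ labels the conjugacy class of $s$) collapses $D_{y_1}(\varphi(f))$ to
\[
-\sum_\lambda \frac{c_\lambda}{\lambda^2} \sum_{s \in C_\lambda} (y_1, \alpha_s)\, \alpha_s^{p-1} \otimes \alpha_s^\vee\,(s \cdot f).
\]

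It then remains to show this sum vanishes, which I expect to follow by the same mechanism as in Lemma \ref{0,p-2,1}. By Proposition \ref{reflequiv}, only the branch with $\alpha_s = x_1 + bx_2$ contributes (the remaining branch has $(y_1, x_2) = 0$), so for each $\lambda$ the sum runs over $b, d \in \F_p$. For fixed $b$, the Frobenius factor $\alpha_s^{p-1} = (x_1 + bx_2)^{p-1}$ lives in the $\h^*$-slot and is independent of $d$; the $d$-dependent $\h$-slot factor is $\alpha_s^\vee (s \cdot f)$, where $\alpha_s^\vee = (1-\lambda-bd) y_1 + d y_2$ is degree one in $d$ and $s \cdot f$ has degree at most $p-3$ in $d$ (since each $s \cdot y_i$ is degree one in $d$ and $f \in S^{p-3}\h$). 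The total $d$-degree is therefore at most $p-2$, so the inner sum over $d \in \F_p$ vanishes by Lemma 2.26 of \cite{BalChen}; summing over $b$ then gives zero. The main technical point I anticipate is this degree bookkeeping, and the Frobenius factor $\alpha_s^{p-1}$ does not obstruct the argument because it is constant in the inner summation variable $d$.
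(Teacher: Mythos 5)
Your argument follows essentially the same route as the paper's: reduce by the $1\leftrightarrow 2$ symmetry to checking $D_{y_1}$, use Frobenius to get $(1-s)x_i^p=(\alpha_s^\vee,x_i)\,\alpha_s^p$, collapse the two terms via $s\cdot\alpha_s^\vee=\alpha_s^\vee/\lambda$, and kill the inner sum over $d$ by the degree count $1+(p-3)=p-2<p-1$ together with Lemma 2.26 of \cite{BalChen} (your equivariance argument for why the span is a copy of $S^{p-3}\h$ is a nice explicit supplement to what the paper leaves implicit). Two small corrections: the collapsed prefactor should be $c_\lambda/\lambda$ rather than $c_\lambda/\lambda^2$, and for $\lambda=1$ the parametrization only runs over $d\neq 0$, so you must observe (as the paper does) that each summand is divisible by $d$ before extending the inner sum to all of $\F_p$.
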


\begin{proof}
The space of these vectors are symmetric with respect to switching indices $1$ and $2$, so it is enough to prove $D_{y_1}$ acts on it by zero. A computation very similar to the one in Lemma \ref{0,p-2,1} gives 
$$D_{y_1}(x_1^p\otimes y_1f+x_2^py_2f)=-\sum_{\lambda}c_{\lambda} \sum_{b,d}(x_1+bx_2)^{p-1}\otimes \frac{1}{\lambda}((1-\lambda-bd)y_1+dy_2)(s.f).$$
The inner sum is over all $b,d\in \F_p$ if $\lambda\ne 1$ and over all $b\in \F_p, d\in \F_p^{\times}$ if $\lambda=1$. However, if $\lambda=1$, then every summand is divisible by $d$ so the term with $d=0$ does not  contribute, and we can also consider it as a sum over all $d\in \F_p$. The degree in $d$ of every term of this polynomial is less or equal to $1+\deg f=p-2<p-1$, so by Lemma 2.26 of \cite{BalChen}, the sum is zero. 
\end{proof}

\begin{lemma} \label{1,p-2,1,nonsing}
For generic $c$, the vectors from Lemma \ref{1,p-2,1,sing} are the only singular vectors in $M_{1,c}(S^{p-2}\h)_p$.
\end{lemma}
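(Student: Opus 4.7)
The plan is to combine the representation-theoretic constraint from Corollary \ref{newp} with the central-character obstruction of Lemma \ref{hc1}. Since $J_{1,c}(S^{p-2}\h)$ is generated in degrees divisible by $p$, it vanishes below degree $p$, so the quotient appearing in Corollary \ref{newp} reduces in degree $p$ simply to $(S^1\h^*)^p\otimes S^{p-2}\h$. Because $GL_2(\F_p)$ acts $\F_p$-linearly, the $p$-th power map $x\mapsto x^p$ is an isomorphism of $G$-representations $(S^1\h^*)^p\cong \h^*$. Using Lemma \ref{duals} together with Proposition \ref{tensorprod} then produces the short exact sequence
$$ 0 \to S^{p-3}\h \to \h^*\otimes S^{p-2}\h \to S^{p-1}\h\otimes (\det)^{-1} \to 0, $$
whose two ends are nonisomorphic irreducibles. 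Hence the space of singular vectors in $M_{1,c}(S^{p-2}\h)_p$ is a $G$-subrepresentation whose composition factors form a submultiset of $\{[S^{p-3}\h],\,[S^{p-1}\h\otimes (\det)^{-1}]\}$, each with multiplicity at most one.

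Lemma \ref{1,p-2,1,sing} already exhibits the $S^{p-3}\h$ summand explicitly, so it suffices to rule out any contribution of $[S^{p-1}\h\otimes (\det)^{-1}]$. I would argue by contradiction via a block-theoretic argument. Suppose a degree-$p$ singular vector $v$ generated a $G$-subrepresentation isomorphic to $S^{p-1}\h\otimes (\det)^{-1}$. By Lemma \ref{homexist}, this would yield a nonzero homomorphism $\phi\colon M_{1,c}(S^{p-1}\h\otimes (\det)^{-1}) \to M_{1,c}(S^{p-2}\h)[p]$. The image of $\phi$, being a nonzero quotient of a Verma module, has $L_{1,c}(S^{p-1}\h\otimes (\det)^{-1})$ as its simple head. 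Reversing the grading shift, $L_{1,c}(S^{p-1}\h\otimes (\det)^{-1})[-p]$ appears as a composition factor of $M_{1,c}(S^{p-2}\h)$. Lemma \ref{hc1}, valid at $t=1$ and generic $c$, then forces $h_c(S^{p-1}\h\otimes (\det)^{-1})=h_c(S^{p-2}\h)$, which by the explicit formulas for $h_c$ amounts to the nontrivial linear condition
$$ -\sum_{\lambda\ne 0,1}\lambda^{-1}c_\lambda \,=\, c_1. $$
This cuts out a proper hyperplane in parameter space, and since ``generic'' may be refined to exclude this hyperplane, we obtain the desired contradiction.

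The main obstacle here is conceptual rather than computational: one must correctly identify the Frobenius-twisted piece $(S^1\h^*)^p$ with $\h^*$ as a $GL_2(\F_p)$-representation and pin down the two composition factors of $\h^*\otimes S^{p-2}\h$. Once the candidate composition factors are in hand, the central-character argument eliminates the unwanted one cleanly, avoiding any explicit Dunkl-operator computation of the kind used in Lemma \ref{1,p-2,1,sing}.
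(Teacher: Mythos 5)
Your reduction to the two candidate composition factors $[S^{p-3}\h]$ and $[S^{p-1}\h\otimes(\det)^{-1}]$ -- via Corollary \ref{newp}, the Frobenius-twist identification $(S^1\h^*)^p\cong\h^*$, and Proposition \ref{tensorprod} -- is exactly the paper's first step. Where you genuinely diverge is in excluding $[S^{p-1}\h\otimes(\det)^{-1}]$: the paper evaluates a Dunkl operator explicitly to show $D_{y_1}(x_1^p\otimes y_1^{p-2})\ne 0$ (so the copy of $S^{p-1}\h\otimes(\det)^{-1}$ inside $(\h^*)^p\otimes S^{p-2}\h$ is not singular), and then must \emph{additionally} verify, by decomposing $S^{p-2}\h\otimes S^{p-2}\h$, that no other composition factor of $S^p\h^*\otimes S^{p-2}\h$ is isomorphic to $S^{p-1}\h\otimes(\det)^{-1}$. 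Your central-character argument via Lemma \ref{hc1} replaces both steps at once: $h_c(S^{p-2}\h)=c_1$ while $h_c(S^{p-1}\h\otimes(\det)^{-1})=-\sum_{\lambda\ne 0,1}\lambda^{-1}c_\lambda$, so the block condition fails off a hyperplane and no copy of that factor anywhere in degree $p$ can consist of singular vectors for generic $c$. This is cleaner and avoids all computation in $S\h^*$; the trade-off is that it is intrinsically a generic-$c$ statement, and you still need Corollary \ref{newp} to dispose of the factors $S^a\h\otimes(\det)^b$ with $a<p-1$, which all share the central character $c_1$ and are invisible to the block argument.

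One step needs tightening. You pass from ``$[S^{p-1}\h\otimes(\det)^{-1}]$ is a composition factor of the space $W$ of degree-$p$ singular vectors'' to ``some singular vector generates a $G$-subrepresentation isomorphic to $S^{p-1}\h\otimes(\det)^{-1}$.'' If $W$ were a nonsplit extension of $S^{p-1}\h\otimes(\det)^{-1}$ by $S^{p-3}\h$, that factor would occur only as a quotient of $W$, not as a subrepresentation, and no such vector $v$ would exist; so your contradiction hypothesis is not the full negation of the claim. The fix is immediate: all of $W$ is singular, so $\mathbf{h}$ acts on $W$ both as the scalar $h_c(S^{p-2}\h)+tp=c_1$ and as $-\sum_{s}c_s s|_W$, and the latter acts on every irreducible subquotient $\sigma$ of $W$ by $h_c(\sigma)$; hence every composition factor of $W$ satisfies $h_c(\sigma)=c_1$, and the same hyperplane condition finishes the argument. (Alternatively, first quotient by the $H_{1,c}$-submodule generated by the known singular vectors, after which the offending factor appears as an honest lowest-weight subrepresentation and your Verma-homomorphism argument applies verbatim.)
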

\begin{proof}
We will use Corollary \ref{newp}, which in our situation says that the space of singular vectors in $M_{1,c}(S^{p-2}\h)_p$ has the same composition series (as a $GL_{2}(\F_p)$ representation) as some subrepresentation of $(\h^*)^p\otimes S^{p-2}\h$. This space is isomorphic to $\h^*\otimes S^{p-2}\h$ and fits into a short exact sequence from Lemma \ref{tensorprod}
$$0 \rightarrow S^{p-3}\h \rightarrow \h^*\otimes S^{p-2}\h \rightarrow S^{p-1}\h\otimes (\det)^{-1} \rightarrow 0,$$ so the space of singular vectors in degree $p$ can either be isomorphic to $S^{p-3}\h$ or to its extension by $S^{p-1}\h\otimes (\det)^{-1} $.

We are going to show that the quotient of $(\h^*)^p \otimes S^{p-2}\h$ by $S^{p-3}\h$ isomorphic to $S^{p-1}\h\otimes (\det)^{-1} $ does not consist of singular vectors, and that there is no other composition factor of $S^p\h^*\otimes S^{p-2}\h$ isomorphic to $S^{p-1}\h\otimes (\det)^{-1} $. This will  prove the claim. 

First, we claim that there is a vector in $(\h^*)^p\otimes S^{p-2}\h$ which is not singular. Namely, we claim that $D_{y_1}(x_1^p\otimes y_1^{p-2}) \ne 0$. From the calculation in the proof of Lemma \ref{1,p-2,1,sing} we can read off that
the coefficient of $c_1 x_1^{p-1} \otimes y_1^{p-2}$ is:
$$\sum_{b\in \F_p, d\in \F_p^{\times}}bd (1-bd)^{p-2}=1\ne 0.$$

Second, we claim that $S^p\h^*\otimes S^{p-2}\h$ has only one composition factor of type $S^{p-1}\h\otimes (\det)^{-1} $. The quotient of $S^p\h^*\otimes S^{p-2}\h$ by the space $(\h^*)^p\otimes S^{p-2}\h$ which we already considered is isomorphic to $S^{p-2}\h \otimes S^{p-2}\h$, which by repeated use of Lemma \ref{tensorprod} has subquotients of the form $S^{2p-4-2j}\h \otimes (\det)^j$. More precisely, the expression in the Grothendieck group $K_{0}(GL_{2}(\F_p))$  for $[S^{p-2}\h \otimes S^{p-2}\h]$ is 
 $$[S^{2p-4}\h]+[S^{2p-6}\h \otimes (\det)]+\ldots +[S^{2p-4-2j}\h \otimes (\det)^j]+ \ldots +[S^{0}\h \otimes (\det)^{p-2}].$$ Some of these are irreducible (the ones with $2p-4-2j<p$), and the others decompose further using Lemma \ref{reducibles} and \ref{tensorprod} 
 $$[S^{2p-4-2j}\h \otimes (\det)^j]=[S^{p-4-2j}\h\otimes \h \otimes (\det)^j]+[S^{2j+2}\h \otimes (\det)^{j+1+p-4-2j}] =$$
$$=[S^{p-5-2j}\h\otimes \h \otimes (\det)^{j+1}]+[S^{p-3-2j}\h\otimes \h \otimes (\det)^{j}]  +[S^{2j+2}\h \otimes (\det)^{j+1+p-4-2j}].$$
These are all the composition factors, and none of them are equal to $S^{p-1}\h\otimes (\det)^{-1}$.
\end{proof}

Next, we consider the auxiliary module ${\bf M}^1$, defined as the quotient of the Verma module $M_{1,c}(S^{p-2}\h)$ by the submodule generated by singular vectors $(x_1^p\otimes y_1+x_2^p\otimes y_2) f$ from Lemma \ref{1,p-2,1,sing}. This module ${\bf M}^1$ matches $L_{1,c}(S^{p-2}\h)$ in degrees $0,1,\ldots 2p-1$, and we search for singular vectors in ${\bf M}^1_{2p}$.

\begin{lemma}\label{1,p-2,2,sing}
The images of the vectors $$x_1^{2p}\otimes y_2^{p-2},\,\,\,\, x_2^{2p}\otimes y_1^{p-2}$$ in $\bf{M}^1$ are singular, and span a subrepresentation isomorphic to $\h \otimes (\det)^{-2}$.
\end{lemma}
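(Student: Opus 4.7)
The plan has two stages: identify the $GL_2(\F_p)$-subrepresentation in ${\bf M}^1_{2p}$ spanned by $v_1=x_1^{2p}\otimes y_2^{p-2}$ and $v_2=x_2^{2p}\otimes y_1^{p-2}$ as $\h\otimes(\det)^{-2}$, then verify singularity.

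For the identification, I use the Frobenius trick: since $GL_2(\F_p)$-matrices have $\F_p$-entries, the map $x\mapsto x^p$ is $G$-equivariant, so $\mathrm{span}(x_1^p,x_2^p)\cong\h^*$ inside $S^p\h^*$, and thus $N_0=\mathrm{span}(x_1^{2p},x_1^p x_2^p,x_2^{2p})\cong S^2\h^*$ inside $S^{2p}\h^*$. Then $N=N_0\otimes S^{p-2}\h\subseteq M_{1,c}(S^{p-2}\h)_{2p}$ is a $G$-subrepresentation isomorphic by Lemma \ref{duals} to $S^2\h\otimes S^{p-2}\h\otimes(\det)^{-2}$, and Proposition \ref{tensorprod} provides the short exact sequence $0\to\h\otimes S^{p-3}\h\otimes(\det)^{-1}\to N\to S^p\h\otimes(\det)^{-2}\to 0$. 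Tracking the first map through the Frobenius and duality isomorphisms shows its image $K$ to be spanned by $x_1^p\cdot s_g$ and $x_2^p\cdot s_g$ for $g\in S^{p-3}\h$, where $s_g=x_1^p\otimes y_1 g+x_2^p\otimes y_2 g$ are the singular vectors from Lemma \ref{1,p-2,1,sing}; in particular $K\subseteq I_{2p}$, the degree-$2p$ piece of the submodule generated by $\{s_g\}$. The subrepresentation $\h\otimes(\det)^{-2}\subset S^p\h\otimes(\det)^{-2}$ spanned by $y_1^p,y_2^p$ (cf.\ Proposition \ref{reducibles} with $j=0,n=1$) pulls back via $y_i^p\leftrightarrow y_i^2\otimes y_i^{p-2}$ to the span of $v_1,v_2$ inside $N$, and since $K\subseteq I_{2p}$, the images $[v_1],[v_2]$ in ${\bf M}^1_{2p}$ span a subrepresentation isomorphic to $\h\otimes(\det)^{-2}$.

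For singularity, the space of singular vectors is $G$-invariant, and the Dunkl operators assemble into a $G$-equivariant map $\h\otimes\mathrm{span}([v_1],[v_2])\to {\bf M}^1_{2p-1}$. A direct check with upper-triangular and swap matrices shows that for $p\geq 3$ the subrepresentation of $\h\otimes\h$ generated by the single vector $y_1\otimes y_2$ is all of $\h\otimes\h$; hence, to conclude that both $[v_1]$ and $[v_2]$ are singular, it suffices to verify the single relation $D_{y_1}(v_1)\in I_{2p-1}$. Here the derivative term $\partial_{y_1}(x_1^{2p})=0$ vanishes in characteristic $p$, and only type-1 reflections contribute since $(y_1,x_2)=0$; using the Frobenius identity $(1-s)\ldot x_1^{2p}=((1-s)\ldot x_1^2)^p=A\alpha_s^p(2x_1^p-A\alpha_s^p)$ with $A=(\alpha_s^\vee,x_1)\in\F_p$, one obtains $(1-s)\ldot x_1^{2p}/\alpha_s=A\alpha_s^{p-1}((2-A)x_1^p-Abx_2^p)$ after expanding $\alpha_s^p=x_1^p+bx_2^p$. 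The resulting sum over $(b,d)\in\F_p^2$ is then to be matched against an element of the form $\sum_g h_g\cdot s_g$ via Lemma 2.26 of \cite{BalChen}.

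The principal obstacle is this final explicit matching: rewriting the double sum $\sum_{b,d}A\alpha_s^{p-1}((2-A)x_1^p-Abx_2^p)\otimes(bAy_1+(1-A)y_2)^{p-2}/\lambda^{p-2}$ as a combination of $S\h^*$-multiples of the $s_g$ requires careful bookkeeping of coefficients coming from $(x_1+bx_2)^{p-1}$ and from the expansion of $(bAy_1+(1-A)y_2)^{p-2}$, analogous to but significantly more intricate than the $t=0$ calculation in Lemma \ref{0,p-2,3}.
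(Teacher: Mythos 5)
Your setup is sound and in one respect sharper than necessary: the Frobenius identification of $\mathrm{span}(x_1^{2p},x_1^px_2^p,x_2^{2p})\otimes S^{p-2}\h$ with $S^2\h\otimes S^{p-2}\h\otimes(\det)^{-2}$, the observation that the kernel of the multiplication map is exactly $x_1^p s_g, x_2^p s_g$ and hence already lies in the submodule $I$ generated by the degree-$p$ singular vectors, and the resulting identification of $[v_1],[v_2]$ with the socle $\h\otimes(\det)^{-2}$ of $S^p\h\otimes(\det)^{-2}$ all check out. Your reduction of singularity to the single relation $D_{y_1}(v_1)\in I_{2p-1}$ is also correct and is a genuine economy: since $[v_1]$ corresponds to the $y_2$-vector of $\h\otimes(\det)^{-2}$ and $y_1\otimes y_2$ generates all of $\h\otimes\h$ for $p\geq 3$ (while $y_2\otimes y_2$ generates only the symmetric part), one Dunkl operator applied to one vector suffices, whereas the paper verifies both $D_{y_1}$ and $D_{y_2}$ on $x_1^{2p}\otimes y_2^{p-2}$ and then invokes irreducibility of the span.

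The problem is that you stop exactly where the lemma actually lives. The verification that the double sum over $(b,d)\in\F_p^2$ reduces, modulo $I_{2p-1}$, to zero is not a routine detail that can be deferred: it is the entire content of the statement, and there is no structural shortcut. Unlike the $t=0$ analogue (Lemma \ref{0,p-2,2}), where Schur's lemma kills the map into an irreducible target, here $D_{y_1}(x_1^{2p}\otimes y_2^{p-2})$ is genuinely \emph{nonzero} in the Verma module $M_{1,c}(S^{p-2}\h)$; after summing over $d$ and then $b$ one obtains an explicit nonzero element of $S^{2p-1}\h^*\otimes S^{p-2}\h$, and one must then exhibit it as an $S\h^*$-combination of the $s_g$ --- concretely, by replacing every term $a\cdot x_2^p\otimes y_2 f$ (with $x_2$-degree $\geq p$ and $y_2$-degree $\geq 1$) by $-a\cdot x_1^p\otimes y_1 f$ and checking that the result collapses to $0$. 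Until that bookkeeping is carried out, the proof is incomplete; as written, you have reduced the lemma to a single identity but not established it.
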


\begin{proof}
It is easy to check  that they span an irreducible subrepresentation isomorphic to $\h \otimes (\det)^{-2}$. It is then a direct computation to show that for one of the vectors, for example $x_1^{2p}\otimes y_2^{p-2}$, satisfies that $D_{y_1}(x_1^{2p}\otimes y_2^{p-2})$ and $D_{y_2}(x_1^{2p}\otimes y_2^{p-2})$ are in the submodule spanned by the singular vectors from Lemma \ref{1,p-2,1,sing}. Thus, $x_1^{2p}\otimes y_2^{p-2}$ is singular in the quotient $\bf{M}^1$.

The computation is as follows.  It is enough to show that one of them is singular, for example $x_1^{2p}\otimes y_2^{p-2}$. An explicit computation yields a nonzero coefficient of $-c_{\lambda}$ in $D_{y_1}(x_1^{2p}\otimes y_2^{p-2})$, and denoting $\alpha_b=x_1+bx_2$, we get it is equal to:
$$\sum_{b,d}\frac{1}{\alpha_b}\left( x_1^{2p}-(x_1-(1-\lambda-bd)\alpha_b)^{2p} \right)\otimes \frac{(b(1-\lambda-bd)y_1+(\lambda+bd)y_2)^{p-2}}{\lambda^{p-2}}$$
$$=\sum_{b,d}(1-\lambda-bd)\left( (1+\lambda+bd)x_1^{p}-b(1-\lambda-bd)x_2^p \right)\left( \sum_{i=0}^{p-1}(-1)^ib^ix_1^{p-1-i}x_2^i\right) \otimes$$
$$\otimes \frac{1}{\lambda^{p-2}}\sum_{j=0}^{p-2}{p-2\choose j}b^j(1-\lambda-bd)^j(\lambda+bd)^{p-2-j}y_1^jy_2^{p-2-j}.$$
Summing over all $d$ and using lemma \ref{ntuplesum}, this is equal to
$$ \sum_{b}\left(2\lambda bx_1^p-2b^2(1-\lambda)x_2^p \right)\left(\sum_{i=0}^{p-1}(-1)^ib^ix_1^{p-1-i}x_2^i \right)\otimes \frac{1}{\lambda^{p-2}}\sum_{j=0}^{p-2}{p-2\choose j}(-1)^j b^{j+p-2}y_1^jy_2^{p-2-j}+$$
$$+\sum_{b}\left( b^2x_1^p+b^3x_2^p\right)\left(\sum_{i=0}^{p-1}(-1)^ib^ix_1^{p-1-i}x_2^i \right)\otimes \frac{1}{\lambda^{p-2}}\Bigg( -2\lambda b^{p-3}y_2^{p-2}-2(1-\lambda)b^{2p-5}y_1^{p-2}+$$
$$+\sum_{j=1}^{p-3}{p-2 \choose j} (-1)^jb^{p-3-j}(-2\lambda-j)y_1^jy_2^{p-2-j}\Bigg) .$$
Summing over $b\in \F_p$ using lemma \ref{ntuplesum} and reorganizing the terms, we get
$$\frac{-1}{\lambda^{p-2}}\Bigg( 2(1-\lambda)\left( x_1^{2p-2}x_2-x_1^{p-1}x_2^p\right)\otimes y_1^{p-2}+2\lambda\left( x_1x_2^{2p-2}-x_1^{p}x_2^{p-1}\right)\otimes y_2^{p-2}+$$
$$+2\lambda x_1^{2p-2}x_2 \otimes y_1^{p-2}+2\lambda x_1^{p}x_2^{p-1}\otimes y_2^{p-2}+2(1-\lambda) x_1^{p-1}x_2^p \otimes y_1^{p-2}+2(1-\lambda) x_1x_2^{2p-2} \otimes y_2^{p-2}+$$
$$+\sum_{i=1}^{p-3}{p-2 \choose i}\left( (2+i)x_1^{2p-2-i}x_2^{i+1}-ix_1^{p-1-i}x_2^{p+i}\right)\otimes y_1^{p-2-i}y_2^{i} \Bigg).$$
This is nonzero in $M_{1,c}(S^{p-2}\h)$, and
so the vectors from lemma are not singular there. However, in the quotient ${\bf M}^1$, the expression is zero.
This can be shown by replacing, in the above long expression, anything of the form $a\cdot x_2^p\otimes y_2 \cdot f$ (meaning any term whose degree of $x_2$ is at least $p$ and whose degree of $y_2$ is at least $1$) by the equivalent expression $-a\cdot x_1^p \otimes y_1 \cdot f$. The expression then simplifies to $0$, showing that in the quotient ${\bf M}^1$,  $D_{y_1}(x_1^{2p}\otimes y_2^{p-2})=0$.
Similarly, the coefficient in $D_{y_2}(x_1^{2p}\otimes y_2^{p-2})$ of $-c_{\lambda}$ is equal to 
$$\sum_{b,d}\frac{b}{\alpha_b}\left( x_1^{2p}-(x_1-(1-\lambda-bd)\alpha_b)^{2p} \right)\otimes \frac{(b(1-\lambda-bd)y_1+(\lambda+bd)y_2)^{p-2}}{\lambda^{p-2}}+$$
$$+\sum_{a}\frac{1}{x_2}\left(x_1^{2p}-(x_1-ax_2)^{2p} \right)\otimes \frac{(ay_1+y_2)^{p-2}}{\lambda^{p-2}}.$$
The proof that the first part of the sum is $0$ in ${\bf M}^1$ is very similar to the previous computation, as it differs from the expression calculated there just by one power of $b$. The second part is equal to $$\frac{-1}{\lambda^{p-2}}\left( 2x_1^px_2^{p-1}\otimes y_1^{p-2}+2x_2^{2p-1}\otimes y_1^{p-3}y_2 \right),$$ which is also zero in ${\bf M}^1$.
A similar argument works for $D_{y_2}(x_1^{2p} \otimes y_2^{p-2})$.

\end{proof}

\begin{lemma} \label{1,p-2,2,nonsing}
For generic $c$, the vectors from Lemma \ref{1,p-2,2,sing} are the only singular vectors in ${\bf M}^1_{2p}$.
\end{lemma}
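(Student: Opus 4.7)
The plan is to apply Corollary \ref{newp} to constrain the singular vectors in ${\bf M}^1_{2p}$ to live in an explicit quotient $Q$ of $(S^2\h^*)^p \otimes S^{p-2}\h$, identify $Q$ as an indecomposable $GL_2(\F_p)$-representation with unique proper nonzero subrepresentation $\h\otimes(\det)^{-2}$, and then exhibit a single non-singular representative of the remaining composition factor.

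First I would observe that singular vectors in ${\bf M}^1_{2p}$ are in bijection with ``new'' generators of $J_{1,c}(\tau)$ in degree $2p$. Indeed, by Corollary \ref{newp} all generators of $J$ lie in degrees divisible by $p$, and the only such degree below $2p$ is $p$; hence $J_{1,c}(\tau)_{2p-1} = S^{p-1}\h^*\cdot J_{1,c}(\tau)_p$ is already contained in the submodule by which we quotient $M_{1,c}(\tau)$ to form ${\bf M}^1$. Any singular vector in ${\bf M}^1_{2p}$ therefore lifts to a nonzero class in $(J_{1,c}(\tau)/\h^*J_{1,c}(\tau))_{2p}$, and Corollary \ref{newp} constrains these classes to embed into
$$Q := (S^2\h^*)^p \otimes S^{p-2}\h \,\big/\, \bigl(\h^*J'_{1,0}(\tau)_{2p-1} \cap (S^2\h^*)^p \otimes S^{p-2}\h\bigr).$$

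Next I would compute the structure of $Q$ using Lemmas \ref{duals}, \ref{tensorprod}, and \ref{reducibles}. The intersection in the denominator equals the image of the map
$$(\h^*)^p \otimes S^{p-3}\h \to (S^2\h^*)^p \otimes S^{p-2}\h, \qquad x_i^p\otimes f\mapsto x_i^p\bigl(x_1^p\otimes y_1 f + x_2^p\otimes y_2 f\bigr),$$
and this map is injective because an element of its kernel forces $y_1g = y_2 h = 0$ in $S^{p-2}\h$, whence $g=h=0$. Via Lemma \ref{duals}, this image is identified with the kernel of the Lemma \ref{tensorprod} surjection
$$0 \to \h^*\otimes S^{p-3}\h \to S^2\h^* \otimes S^{p-2}\h \to S^p\h\otimes(\det)^{-2} \to 0,$$
so $Q \cong S^p\h \otimes (\det)^{-2}$. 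By Lemma \ref{reducibles} (with $j=0$, $n=1$) there is a non-split short exact sequence
$$0 \to \h\otimes(\det)^{-2} \to S^p\h \otimes (\det)^{-2} \to S^{p-2}\h \otimes (\det)^{-1} \to 0,$$
and the middle term is indecomposable, so the only subrepresentations of $Q$ are $0$, $\h\otimes(\det)^{-2}$, and $Q$ itself.

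Finally, Lemma \ref{1,p-2,2,sing} already shows that the subrepresentation of singular vectors in ${\bf M}^1_{2p}$ contains $\h\otimes(\det)^{-2}$. To conclude that this inclusion is an equality (and thus complete the proof), I only need to exhibit one explicit lift $v \in (S^2\h^*)^p \otimes S^{p-2}\h$ whose image in $Q$ projects nontrivially onto the quotient $S^{p-2}\h\otimes(\det)^{-1}$, together with a direct calculation showing that $D_{y_1}v$ does not lie in the submodule generated by the degree-$p$ singular vectors of Lemma \ref{1,p-2,1,sing}. A natural candidate is an appropriate scalar combination involving $x_1^p x_2^p \otimes y_1^{p-2}$, which manifestly lies outside $\h\otimes(\det)^{-2}$.

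The hard part will be step three: choosing $v$ so that it genuinely represents a nonzero class in the quotient $S^{p-2}\h\otimes(\det)^{-1}$ of $Q$ (rather than in $\h\otimes(\det)^{-2}$ or inside the intersection we quotiented by), and then executing the Dunkl operator computation. This will proceed in the same style as the calculation in Lemma \ref{1,p-2,2,sing}, but must produce a provably nonzero answer rather than a vanishing one; the bookkeeping through the filtrations of Lemmas \ref{tensorprod} and \ref{reducibles}, and the simplifications modulo the degree-$p$ singular vectors inside ${\bf M}^1$, will absorb most of the effort.
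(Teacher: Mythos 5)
Your first half reproduces the paper's first step: reduce to the image $Q$ of the $p$-th powers $(S^2\h^*)^p\otimes S^{p-2}\h$ in ${\bf M}^1_{2p}$, identify $Q\cong S^p\h\otimes(\det)^{-2}$ with its unique proper nonzero subrepresentation $\h\otimes(\det)^{-2}$, and exhibit a non-singular $p$-th power (the paper uses exactly your candidate $x_1^px_2^p\otimes y_1^{p-2}$) to conclude that the singular vectors \emph{inside $Q$} are precisely $\h\otimes(\det)^{-2}$. That part is fine and matches the paper.

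The gap is in the sentence ``Corollary \ref{newp} constrains these classes to embed into $Q$.'' It does not. The corollary only says that the subrepresentation of new generators in $(J_{1,c}(\tau)/\h^*J_{1,c}(\tau))_{2p}\subseteq {\bf M}^1_{2p}$ has composition factors forming a submultiset of those of $Q$; the generators of $J_{1,c}(\tau)$ at generic $c$ are deformations of $p$-th powers, not literal $p$-th powers, so the space of singular vectors in ${\bf M}^1_{2p}$ need not lie in the image of $(S^2\h^*)^p\otimes S^{p-2}\h$. Consequently your argument leaves open the possibility that the singular subspace is an extension of $\h\otimes(\det)^{-2}$ by a copy of $S^{p-2}\h\otimes(\det)^{-1}$ realized \emph{outside} $Q$, and no computation inside $Q$ can rule that out. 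The paper closes exactly this loophole with a second step you are missing: any such factor would force $S^{p-2}\h\otimes(\det)^{-1}$ to appear in the socle of ${\bf M}^1_{2p}/(p\text{-th powers})$, which is identified (via Lemmas \ref{tensorprod}, \ref{reducibles} and \cite{glover1}, Theorem (5.3)) with $S^{p-2}\h\otimes S^{p-1}\h\otimes(\det)^{-1}\cong S^{p(p-1)-1}\h\otimes(\det)^{-1}$, whose socle $\bigoplus_{m=0}^{(p-3)/2}S^{2m+1}\h\otimes(\det)^{p-3-m}$ (Theorem (5.9) of \cite{glover1}) contains no summand isomorphic to $S^{p-2}\h\otimes(\det)^{-1}$. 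You need this (or an equivalent) argument to finish the proof.
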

\begin{proof}
The space of $p$-th powers $(S^{2}\h^*)^p\otimes S^{p-2}\h$ in $M_{1,c}(S^{p-2}\h)_{2p}\cong  S^{2p}\h^*\otimes S^{p-2}\h$ is isomorphic to $S^{2}\h^*\otimes S^{p-2}\h$. Using Lemma \ref{tensorprod}, its image in the quotient  ${\bf M}^1_{2p}$ is isomorphic to $(S^{2}\h \otimes S^{p-2}\h \otimes (\det)^{-2})/ (\h\otimes S^{p-3}\h\otimes (\det)^{-1}) \cong S^{p}\h\otimes (\det)^{-2}$. By Lemma \ref{reducibles}, this decomposes as
$$0 \rightarrow  \h \otimes (\det)^{-2} \rightarrow S^{p} \h \otimes (\det)^{-2} \rightarrow S^{p - 2}\h \otimes (\det)^{-1} \rightarrow 0.$$
We already showed in that the subspace $  \h \otimes (\det)^{-2} $ consists of singular vectors. By Lemma 
\ref{newp}, to see these are all singular vectors in this degree, we need to show that there is no extension of $ \h \otimes (\det)^{-2} $ by $S^{p - 2}\h \otimes (\det)^{-1}$ consisting of singular vectors.  

First, direct computation shows that that $x_1^px_2^p\otimes y_1^{p-1}$ is a $p$-th power that is not annihilated by $D_{y_1}$, so the entire space $S^{p} \h \otimes (\det)^{-2}$ consists of singular vectors. 

Secondly, let us show that there is no other composition factor of ${\bf M}^1_{2p}$ isomorphic to $S^{p - 2}\h \otimes (\det)^{-1}$ and extending $ \h \otimes (\det)^{-2}$. If there was, it would be in the socle of the quotient of ${\bf M}^1_{2p}$ by $p$-th powers, which is, by Lemmas \ref{tensorprod} and \ref{reducibles}, and \cite{glover1}, Theorem (5.3), isomorphic to 
$$ S^{p-2}\h\otimes S^{p-1}\h \otimes (\det)^{-1}\cong S^{p(p-1)-1}\h \otimes (\det)^{-1}.$$
By Theorem (5.9) in \cite{glover1} its socle is
$$\bigoplus_{m=0}^{(p-3)/2} S^{2m+1}\h \otimes (\det)^{p-3-m}.$$ As these are all irreducible modules and none of them is isomorphic to $S^{p - 2}\h \otimes (\det)^{-1}$, we conclude there are no other singular vectors in  ${\bf M}^1_{2p}$.

\end{proof}

The second auxiliary module ${\bf M}^2$ to consider is the quotient of $\bf{M}^1$ by the rational Cherednik algebra submodule generated by the two dimensional space of singular vectors from the previous lemma. Because of Corollary \ref{newp} and the previous lemmas in this section (Lemma \ref{1,p-2,1,nonsing} and \ref{1,p-2,2,nonsing}), ${\bf M}^2$ is equal to $L_{1,c}(S^{p-2}\h) $ in degrees up to $3p-1$. In degree $3p$, ${\bf M}^2$ contains some new singular vectors, given in the following lemma.

\begin{lemma}\label{1,p-2,3,sing}
The subspace $(S^3\h^*)^p\otimes S^{p-2}\h \subseteq M_{1,c}(S^{p-2}\h)_{3p} $ is in $\Ker B$.
\end{lemma}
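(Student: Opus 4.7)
The plan is to reduce to a question about singular vectors in the quotient $\bf{M}^2$ defined earlier in the proof of Proposition \ref{1,p-2}, and then identify the image of $(S^3\h^*)^p\otimes S^{p-2}\h$ inside $\bf{M}^2_{3p}$ as (at most) one irreducible $GL_2(\F_p)$-representation, which can be killed by the Dunkl operators by a Schur's-lemma argument combined with one explicit calculation. Concretely, since the $H_{1,c}$-submodule $J_{<3p}$ generated by the singular vectors from Lemmas \ref{1,p-2,1,sing} and \ref{1,p-2,2,sing} is already contained in $\Ker B$, the statement $(S^3\h^*)^p\otimes S^{p-2}\h \subseteq \Ker B$ will follow if one shows that the image of $(S^3\h^*)^p\otimes S^{p-2}\h$ in $\bf{M}^2=M_{1,c}(S^{p-2}\h)/J_{<3p}$ consists of singular vectors, as any singular vector of positive degree generates a proper graded submodule.

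First I would compute the composition series of $(S^3\h^*)^p\otimes S^{p-2}\h\cong S^3\h^*\otimes S^{p-2}\h$ via Lemmas \ref{tensorprod}, \ref{reducibles}, and \ref{duals}. Next, within the $p$-th power subspace at degree $3p$, I would identify the contributions to $J_{<3p}$: multiplying the singular vectors $(x_1^p\otimes y_1+x_2^p\otimes y_2)f$ of degree $p$ by $(S^2\h^*)^p$ yields a copy of $S^2\h^*\otimes S^{p-3}\h$, while multiplying the singular vectors $x_1^{2p}\otimes y_2^{p-2}$, $x_2^{2p}\otimes y_1^{p-2}$ of degree $2p$ by $(\h^*)^p$ yields a copy of $\h^*\otimes\h\otimes(\det)^{-2}$. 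Decomposing both contributions and subtracting them from the composition factors of $S^3\h^*\otimes S^{p-2}\h$, exactly one copy of the irreducible $S^{p-3}\h\otimes(\det)^{-1}$ should survive (consistent with the reduced character claim at the beginning of the proof of Proposition \ref{1,p-2}); a dimension check confirms $\dim(S^{p-3}\h\otimes(\det)^{-1})=p-2$ matches $4(p-1)-3(p-2)-4$. Hence the image of $(S^3\h^*)^p\otimes S^{p-2}\h$ in $\bf{M}^2_{3p}$ is either zero or the irreducible $S^{p-3}\h\otimes(\det)^{-1}$.

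For singularity, I would pick a concrete representative of that image, for instance the class of $x_1^{2p}x_2^p\otimes y_1^{p-2}$ (or an appropriate combination built from the tensorial formulas in Lemma \ref{reducibles}), and verify that $D_{y_1}$ and $D_{y_2}$ applied to it land in $J_{<3p}$. Because $\h\otimes\bf{M}^2_{3p}\to\bf{M}^2_{3p-1}$ is a $GL_2(\F_p)$-equivariant map, vanishing on one nonzero vector of an irreducible forces vanishing on the whole irreducible, so this single computation will complete the argument. The Dunkl computation is facilitated by the characteristic $p$ identity $s\cdot x^p = x^p - (\alpha^\vee, x)^p\alpha^p$ and the parameterization of reflections from Proposition \ref{reflequiv}, after which Lemma 2.26 of \cite{BalChen} is applied repeatedly to collapse sums over $b,d\in\F_p$ of polynomials of degree at most $p-2$ in each variable.

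The main obstacle is the bookkeeping in this last Dunkl calculation: one has to expand $D_{y_i}$ of a $p$-th-power generator in degree $3p$, collect the resulting polynomial expressions in $b,d$, collapse them via the summation lemma, and recognize each surviving summand either as a multiple of $(x_1^p\otimes y_1+x_2^p\otimes y_2)f$ multiplied by some element of $S^{2p}\h^*$, or as a multiple of $x_i^{2p}\otimes y_j^{p-2}$ multiplied by some element of $S^p\h^*$. This is an elaborate variant of the computation that concluded the proof of Lemma \ref{1,p-2,2,sing}, and the only genuine subtlety is to choose the representative so that only terms of these two recognizable shapes remain before invoking the $G$-equivariance argument.
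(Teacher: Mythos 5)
Your reduction and your identification of the surviving subquotient agree with the paper: both arguments pass to ${\bf M}^2=M_{1,c}(S^{p-2}\h)/J_{<3p}$ and show, via Lemmas \ref{tensorprod} and \ref{reducibles}, that the image of $(S^3\h^*)^p\otimes S^{p-2}\h$ in ${\bf M}^2_{3p}$ is the single irreducible $S^{p-3}\h\otimes(\det)^{-1}$ (your dimension check $4(p-1)-3(p-2)-4=p-2$ is consistent with the paper's successive short exact sequences). Where you diverge is the vanishing step. The paper does \emph{no} Dunkl computation here: it observes that the image of the equivariant map $\h\otimes S^{p-3}\h\otimes(\det)^{-1}\to{\bf M}^2_{3p-1}$ would have to contribute a simple submodule of the target that is a composition factor ($S^{p-4}\h$ or $S^{p-2}\h\otimes(\det)^{-1}$) of the source, and then computes the socle of ${\bf M}^2_{3p-1}\cong S^{p(p-1)-1}\h\otimes(\det)^{-1}$ from Glover's Theorems (5.3) and (5.9) to see that neither factor occurs. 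You instead propose to verify $D_{y_1}$ and $D_{y_2}$ kill one explicit representative modulo $J_{<3p}$ and conclude by irreducibility of $S^{p-3}\h\otimes(\det)^{-1}$ (the clean justification is that the singular vectors form a $G$-subrepresentation of that irreducible, rather than equivariance of the map on the reducible space $\h\otimes S^{p-3}\h\otimes(\det)^{-1}$). Your route is more elementary, avoiding Glover's socle theorem, and it mirrors how the paper handles the analogous $t=0$ statement in Lemma \ref{0,p-2,3}; but its decisive step is a degree-$3p$ Dunkl calculation that you correctly flag as the main obstacle and do not carry out, and which is substantially heavier than the $2p$ computation of Lemma \ref{1,p-2,2,sing}. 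The paper's socle argument buys exactly the elimination of that calculation, at the price of importing the modular structure theory of $S^{p(p-1)-1}\h$.
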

\begin{proof}
The space of $p$-th powers in $M_{1,c}(S^{p-2}\h)_{3p}$ is $(S^3\h^*)^p\otimes S^{p-2}\h$. The quotient of this space by the subspace $(S^2\h^*)^p\otimes S^{p-3}\h$ generated by singular vectors from Lemma \ref{1,p-2,1,sing} is by Lemma \ref{tensorprod} isomorphic to $S^{p+1}\h \otimes (\det)^{-3}$. The quotient of this space by the subspace $(\h^*)^p\otimes \h\otimes (\det)^{-2}$ generated by singular vectors from Lemma \ref{1,p-2,2,sing} is by Lemma \ref{reducibles} isomorphic to $S^{p-3}\h \otimes (\det)^{-1}$, and this is the space of $p$-th powers in the ${\bf M}^2_{3p}$. 

We are going to show that this space consists of singular vectors by showing that the restriction to $\h \otimes S^{p-3}\h \otimes (\det)^{-1}$ of the map $\h\otimes {\bf M}^2_{3p}\to {\bf M}^2_{3p-1}$ given by $y\otimes m\mapsto D_y(m)$, which is a homomorphism of group representations, has to be zero. 

The source space of this map is $\h \otimes S^{p-3}\h \otimes (\det)^{-1}$, which fits into the short exact sequence 
$$0 \rightarrow S^{p-4}\h \rightarrow \h \otimes S^{p-3}\h \otimes (\det)^{-1} \rightarrow S^{p-2}\h \otimes (\det)^{-1} \rightarrow  0.$$ The image of the map is a subrepresentation of the target space  ${\bf M}^2_{3p-1}$, so if we show that the socle of ${\bf M}^2_{3p-1}$ does not  have $S^{p-4}\h$ nor $S^{p-2}\h \otimes (\det)^{-1} $ as direct summands, it will follow that the map is zero.

By applying Lemma \ref{reducibles} twice and Lemma \ref{tensorprod} once, we see that the quotient of  ${\bf M}^2_{3p-1}$ by the image $S^{2p-1}\h^*\otimes S^{p-3}\h$ of singular vectors from Lemma \ref{1,p-2,1,sing} is isomorphic to $S^{p-1}\h\otimes S^{p}\h \otimes (\det)^{-2}$. The quotient of that by the image $S^{p-1}\h^*\otimes \h \otimes (\det)^{-2}$ is by Lemma \ref{reducibles} isomorphic to $S^{p-1}\h \otimes S^{p-2}\h \otimes (\det)^{-1}$.  Using \cite{glover1}, Theorem (5.3) again, this is isomorphic to $S^{p(p-1)-1}\h \otimes (\det)^{-1},$ whose socle is by Theorem (5.9) in \cite{glover1} again equal to
$$\bigoplus_{m=0}^{(p-3)/2} S^{2m+1}\h \otimes (\det)^{p-3-m}.$$ None of these summands is of the type  $S^{p-4}\h$ nor $S^{p-2}\h \otimes (\det)^{-1} $, so the required map is zero. This proves the lemma. 

\end{proof}

\section{Characters of $L_{t,c}(S^{i}\h)$ for $i=p-1$}
\subsection{Characters of $L_{t,c}(S^{i}\h)$ for $i=p-1$ and $t=0$}

In this section we will calculate the character of $L_{0,c}(S^{p-1}\h)$ for generic $c$. We first find a certain space $\mathrm{span}_{\Bbbk}\{v_0,\ldots ,v_{p-1} \}$ of singular vectors in $M_{0,c}(S^{p-1}\h)_{p-1}$. We define an auxiliary module $M$ as a quotient of the baby Verma module $N_{0,c}(S^{p-1}\h)$ by the $H_{0,c}(GL_{2},\h)$-submodule generated by these singular vectors. Alternatively, $M$ is the quotient of $M_{0,c}(S^{p-1}\h)$ by the submodule generated by $\mathrm{span}_{\Bbbk}\{v_0,\ldots ,v_{p-1} \}$, $Q_0\otimes S^{p-1}\h$ and $Q_0\otimes S^{p-1}\h$  (for definitions of the invariants $Q_0,Q_1\in (Sh^*)^{G}$, see section \ref{babychar}). We calculate the character of $M$, and finally we show that $M$ is irreducible and isomorphic to $L_{0,c}(S^{p-1}\h)$.

We will extensively use Corollary \ref{1block}, which states that all the composition factors of $M_{t,c}(S^{p-1}\h)$ are of the form $L_{t,c}(S^{p-1}\h)$. Because of that, all the singular vectors in $M_{t,c}(S^{p-1}\h)$ or any of its subrepresentations or quotients have all of their composition factors isomorphic to $S^{p-1}\h$.

\begin{lemma}\label{oneissing}
The vector $$x_1^{p-1}\otimes y_2^{p-1}-x_2^{p-1}\otimes y_1^{p-1}$$ in $S^{p-1}\h^*\otimes S^{p-1}\h \cong M_{0,c}(S^{p-1}\h)_{p-1}$ is singular. 
\end{lemma}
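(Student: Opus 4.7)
The plan is a two-step argument: first a symmetry reduction, then a direct Dunkl calculation.

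\textbf{Symmetry reduction.} Let $\sigma = \left[\begin{array}{cc}0&1\\1&0\end{array}\right] \in GL_2(\F_p)$, so $\sigma\ldot y_1 = y_2$, $\sigma\ldot x_1 = x_2$, and hence $\sigma\ldot v = -v$ for $v = x_1^{p-1}\otimes y_2^{p-1} - x_2^{p-1}\otimes y_1^{p-1}$. The relation $gyg^{-1} = g\ldot y$ in $H_{0,c}(GL_2(\F_p),\h)$ yields the $G$-equivariance $g\ldot D_y(m) = D_{g\ldot y}(g\ldot m)$, so $\sigma\ldot D_{y_1}(v) = D_{y_2}(\sigma\ldot v) = -D_{y_2}(v)$. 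Hence it suffices to prove $D_{y_1}(v) = 0$.

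\textbf{Direct calculation.} Using Proposition \ref{reflequiv}(2), parameterize the reflections in each $C_\lambda$ in two orbits: the orbit with $\alpha = x_1 + bx_2$ (indexed by $(b,d) \in \F_p^2$; when $\lambda = 1$ the missing $d=0$ summand happens to vanish and can be added back) and the orbit with $\alpha = x_2$ (indexed by $a$). For the latter, $(y_1,\alpha) = 0$, so those reflections contribute nothing to $D_{y_1}$ and only the first orbit is active. The rank-one identity $(1-s)\ldot x = (\alpha^\vee, x)\alpha$ combined with the binomial theorem gives
$$\frac{(1-s)\ldot x_i^{p-1}}{\alpha_s} = \sum_{k=1}^{p-1}(-1)^{k+1}\binom{p-1}{k}(\alpha^\vee, x_i)^k\, x_i^{p-1-k}\alpha^{k-1},$$
while $(s\ldot y_j)^{p-1}$ expands similarly from $s\ldot y = y + \lambda^{-1}(y,\alpha)\alpha^\vee$. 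Plugging these into the Dunkl formula writes $D_{y_1}(v)$ as a finite $\Bbbk[x_1,x_2,y_1,y_2]$-linear combination of sums of the form $\sum_{(b,d)\in\F_p^2} P(b,d)$ for explicit polynomials $P$. We then invoke Lemma 2.26 of \cite{BalChen}: $\sum_{b\in\F_p} b^m = 0$ unless $m$ is a positive multiple of $p-1$.

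\textbf{Main obstacle: degree bookkeeping.} A naive bound on the $b$- and $d$-degrees of the summands exceeds $p-1$ (the $b$-degree can reach roughly $4p-5$ and the $d$-degree up to $2(p-1)$), so the vanishing lemma does not automatically kill every monomial: ``resonant'' powers $b^m, d^n$ where $m$ or $n$ is a positive multiple of $p-1$ may a priori contribute. The crucial feature of $v$ is its antisymmetry under $\sigma$, which forces pairwise cancellation of the resonant contributions of $D_{y_1}(x_1^{p-1}\otimes y_2^{p-1})$ against those of $-D_{y_1}(x_2^{p-1}\otimes y_1^{p-1})$. After this cancellation, only non-resonant monomials survive, each summing to zero by the lemma, and hence $D_{y_1}(v) = 0$ as required.
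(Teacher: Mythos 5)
Your symmetry reduction and the setup of the direct calculation match the paper's proof: the paper likewise uses the antisymmetry of $v$ under swapping the indices $1,2$ to reduce to $D_{y_1}(v)=0$, parametrizes each $C_\lambda$ via Proposition \ref{reflequiv} (with only the $\alpha = x_1+bx_2$ orbit contributing since $(y_1,x_2)=0$), expands into monomials in $b,d$, and applies the power-sum vanishing lemma. Your degree bookkeeping is also right: after discarding non-resonant monomials, the surviving contributions are exactly those with $j\equiv k\pmod{p-1}$ in the paper's notation, i.e.\ the cases $j=k$ and $(j,k)=(0,p-1)$.

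The gap is in your final step. You assert that the antisymmetry of $v$ under $\sigma$ ``forces pairwise cancellation'' of the resonant contributions of $D_{y_1}(x_1^{p-1}\otimes y_2^{p-1})$ against those of $-D_{y_1}(x_2^{p-1}\otimes y_1^{p-1})$. But the equivariance $\sigma\ldot D_{y_1}(m)=D_{y_2}(\sigma\ldot m)$ sends $D_{y_1}(x_1^{p-1}\otimes y_2^{p-1})$ to $D_{y_2}(x_2^{p-1}\otimes y_1^{p-1})$ --- it pairs a term of $D_{y_1}(v)$ with a term of $D_{y_2}(v)$, which is precisely the reduction you already spent. It gives no pairing \emph{within} $D_{y_1}(v)$, so it cannot be the mechanism that kills the resonant monomials. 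In the paper the resonant cases are disposed of by explicit identities mod $p$: for $j=k$ the two halves contribute $\binom{p-2}{j}(-1)^{j+1}$ and $\binom{p-2}{j}(-1)^{j}$ (and for $\lambda=1$ one needs $\binom{p-2}{j}=\binom{p-2}{p-2-j}$), while for $(j,k)=(0,p-1)$ with $\lambda\ne 1$ the two halves vanish \emph{separately} --- one via $m^{2(p-1)}=m^{p-1}$ for $m\in\F_p$, the other because $\sum_{b}b^0=p=0$ --- and for $\lambda=1$ one needs $\sum_{i}\binom{p-1}{i}^2=\binom{2p-2}{p-1}\equiv 0\pmod p$. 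So the claimed cancellation is not even uniformly a cancellation between the two halves, and in no case does it follow formally from antisymmetry; it is the computational heart of the lemma and must be carried out.
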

\begin{proof}
The proof is pure computation, using the parametrization of conjugacy classes from Lemma \ref{reflequiv} and Lemma \ref{ntuplesum} extensively.

$$\lambda\ne 1:\,\,\,\, C_{\lambda}=\{\left[\begin{array}{c} 1 \\ b \end{array} \right] \otimes \left[\begin{array}{c} 1-\lambda -bd \\ d \end{array} \right]  | b,d\in \F_p  \} \cup \{ \left[\begin{array}{c} 0 \\ 1 \end{array} \right] \otimes \left[\begin{array}{c} a \\ 1-\lambda \end{array} \right]  | a \in \F_p \} $$
$$\lambda= 1:\,\,\,\, C_{1}=\{\left[\begin{array}{c} 1 \\ b \end{array} \right] \otimes \left[\begin{array}{c} -bd \\ d \end{array} \right]  | b,d\in \F_p, d\ne 0  \} \cup \{ \left[\begin{array}{c} 0 \\ 1 \end{array} \right] \otimes \left[\begin{array}{c} a \\ 0 \end{array} \right]  | a \in \F_p, a\ne 0 \}.$$

First, this vector is antisymmetric with respect to indices $1,2$, so it is enough to show that $D_{y_1}$ acts on it by $0$. For any $\lambda$ the coefficient of $-c_{\lambda}$ in $D_{y_1}(x_1^{p-1}\otimes y_2^{p-1}-x_2^{p-1}\otimes y_1^{p-1})$ is 
\begin{equation} \tag{$\star$} \sum_{s\in C_{\lambda}}(y_1,\alpha_s)\left( \frac{x_1^{p-1}-s.x_1^{p-1}}{\alpha_s}\otimes (s.y_2)^{p-1}-\frac{x_2^{p-1}-s.x_2^{p-1}}{\alpha_s}\otimes (s.y_1)^{p-1} \right). \end{equation}

Let us rewrite this using the parametrization of $C_{\lambda}$ from Lemma \ref{reflequiv}. We use notation $\alpha_b=\left[ \begin{array}{c} 1 \\ b \end{array} \right] \in \h^*$. The sum is over all $b,d\in \F_{p}$ if $\lambda\ne 1$ and over all $b,d\in \F_p$, $d\ne 0$, if $\lambda=1$. The above expression is equal to:

\begin{eqnarray*}
(\star)&=&\sum_{b,d}\frac{1}{\alpha_b}\left(x_1^{p-1}-(x_1-(1-\lambda-bd)\alpha_b)^{p-1}\right)\otimes \frac{1}{\lambda^{p-1}}\left( b(1-\lambda-bd)y_1+(\lambda+bd)y_2 \right)^{p-1} +\\
&&+\frac{1}{\alpha_b}\left((x_2-d\alpha_b)^{p-1} -x_2^{p-1}\right)\otimes \frac{1}{\lambda^{p-1}}\left( (1-bd)y_1+dy_2 \right)^{p-1}=\\
&=& \frac{1}{\lambda^{p-1}}\sum_{b,d}\sum_{i=1}^{p-1}{p-1\choose i} (-1)^{i+1}x_1^{p-1-i}(1-\lambda-bd)^i\alpha_b^{i-1} \otimes  \left( b(1-\lambda-bd)y_1+(\lambda+bd)y_2 \right)^{p-1}+ \\
&& + {p-1\choose i} (-1)^{i}x_2^{p-1-i}d^i\alpha_b^{i-1}\otimes   \left( (1-bd)y_1+dy_2 \right)^{p-1}=  \\
&=& \frac{1}{\lambda^{p-1}}\sum_{b,d}\sum_{i=1}^{p-1}\sum_{j=0}^{i-1}\sum_{k=0}^{p-1} {p-1\choose i} {i-1\choose j} {p-1\choose k}\cdot \\
&& \cdot \left( (-1)^{i+1}(1-\lambda-bd)^ib^jx_{1}^{p-j-2} x_2^j\otimes b^{p-1-k}(1-\lambda-bd)^{p-1-k}(\lambda+bd)^ky_1^{p-1-k}y_2^k + \right. \\
&& \left. + (-1)^i d^ib^{i-1-j} x_1^j x_2^{p-j-2}\otimes (1-bd)^{p-1-k}d^ky_1^{p-1-k}y_2^k \right) =\\
&=& \frac{1}{\lambda^{p-1}}\sum_{k=0}^{p-1}\sum_{j=0}^{p-2}{p-1\choose k}x_{1}^{p-j-2} x_2^j\otimes y_1^{p-1-k}y_2^k \cdot \\
&&\cdot \sum_{b,d} \left( \sum_{i=j+1}^{p-1}{i-1 \choose j}{p-1 \choose i}(-1)^{i+1}(1-\lambda-bd)^{p-1-k+i}(\lambda+bd)^{k}b^{p-1-k+j} \right. +\\
&& + \left. \sum_{i=p-1-j}^{p-1}{p-1 \choose i}{i-1 \choose p-2-j}(-1)^id^{i+k}b^{i+1+j-p}(1-bd)^{p-1-k} \right) .
\end{eqnarray*}

Reading off the coefficient of $x_1^{p-j-2}x_2^{j}\otimes y_1^{p-1-k}y_2^k$ and using that $\frac{1}{\lambda^{p-1}}{p-1 \choose k}$ is never zero, the claim that $(\star)=0$ is equivalent to showing that for every $0\le k \le p-1$, $0\le j\le p-2$, the expression $(\star \star)$ is zero, where $(\star \star)$ is 
\begin{eqnarray*}
&& \sum_{b,d} \left( \sum_{i=j+1}^{p-1}{i-1 \choose j}{p-1 \choose i}(-1)^{i+1}(1-\lambda-bd)^{p-1-k+i}(\lambda+bd)^{k}b^{p-1-k+j} \right. +\\
&& + \left. \sum_{i=p-1-j}^{p-1}{p-1 \choose i}{i-1 \choose p-2-j}(-1)^id^{i+k}b^{i+1+j-p}(1-bd)^{p-1-k} \right) \\
&=& \sum_{b,d} \left( \sum_{i=j+1}^{p-1}{i-1 \choose j}{p-1 \choose i}(-1)^{i+1}(1-\lambda-bd)^{p-1-k+i}(\lambda+bd)^{k}b^{p-1-k+j} \right. +\\
&& + \left. \sum_{i=0}^{j}{p-1 \choose p-1-i}{p-2-i \choose p-2-j}(-1)^id^{p-1+k-i}b^{j-i}(1-bd)^{p-1-k} \right) \\
&=& \sum_{b,d} \left( \sum_{i=j+1}^{p-1}\sum_{m=0}^{p-1-k+i}\sum_{n=0}^{k} {i-1 \choose j}{p-1 \choose i} {p-1-k+i \choose m}{k \choose n}(-1)^{m+i+1} \cdot \right. \\
&& \left. \cdot (1-\lambda)^{p-1-k+i-m}\lambda^{k-n} b^{m+n+p-1-k+j}d^{m+n} + \right. \\
&&  \left. \sum_{i=0}^{j}\sum_{l=0}^{p-1-k} {p-1 \choose p-1-i}{p-2-i \choose p-2-j} 
 {p-1-k \choose l} (-1)^{i+l} b^{j-i+l}d^{p-1+k-i+l}\right) 
\end{eqnarray*}

Now we will use Lemma \ref{ntuplesum}, which states that $\sum_{b\in \F_p}b^N=0$ and $\sum_{d\in \F_p}b^N=0$, unless $N\equiv 0 \pmod{p-1}$.

First assume $\lambda\ne 1$. The first part of the sum includes $\sum_b b^{m+n+p-1-k+j}$ and $\sum_{d} d^{m+n}$, so it is zero unless $$m+n \equiv 0 \pmod{p-1}$$ $$m+n+p-1-k+j \equiv 0 \pmod{p-1},$$ which implies $$j \equiv k \pmod{p-1}.$$ The second part of the sum includes $\sum_{b,d} b^{j-i+l}d^{p-1+k-i+l}$, so it is zero unless $$j-i+l \equiv 0 \pmod{p-1}$$ $$p-1+k-i+l \equiv 0 \pmod{p-1},$$ which again implies $$j \equiv k \pmod{p-1}.$$

As $0\le k \le p-1$, $0\le j\le p-2$, the possibilities for $j \equiv k \pmod{p-1}$ are $j=0,k=p-1$ or $j=k$. Let us calculate $(\star \star)$ in those two cases separately. 

If $j=0,k=p-1$, then 
\begin{eqnarray*}
(\star \star) &=&  \sum_{b,d} \sum_{i=1}^{p-1}\sum_{m=0}^{i}\sum_{n=0}^{p-1}{p-1 \choose i} {i \choose m}{p-1 \choose n}(-1)^{m+i+1}  (1-\lambda)^{i-m}\lambda^{p-1-n} b^{m+n}d^{m+n} +  \\
&& + \sum_{b,d} d^{2(p-1)} =\\
&=&  \sum_{b,d} \sum_{i=1}^{p-1}\sum_{m=0}^{i}\sum_{n=0}^{p-1}{p-1 \choose i} {i \choose m}{p-1 \choose n}(-1)^{m+i+1}  (1-\lambda)^{i-m}\lambda^{p-1-n} b^{m+n}d^{m+n} =\\
&=& \sum_{b,d} (\lambda-bd)^{2(p-1)}-(\lambda-bd)^{p-1}=0.
\end{eqnarray*}

If $j=k$, then, using that $a^{p}=a$,
\begin{eqnarray*}
(\star \star) &=& \sum_{b,d} \left( \sum_{i=j+1}^{p-1}{i-1 \choose j}{p-1\choose i}(-1)^{i+1}(1-\lambda-bd)^{i-j}b^{p-1}(\lambda+bd)^j \right. + \\
&& +\left. \sum_{i=0}^{j}{p-1\choose i}{p-2-i\choose p-2-j}(-1)^i d^{p-1+j-i}b^{j-i}(1-bd)^{p-1-j} \right) \\
&=& \sum_{b,d} \sum_{i=j+1}^{p-1}\sum_{m=0}^{i-j}\sum_{n=0}^{j}{i-1 \choose j}{p-1\choose i}{i-j \choose m}{j\choose n}(1-\lambda)^{i-j-m}\lambda^{j-n}(-1)^{m+i+1} b^{p-1+m+n}d^{m+n} +  \\
&& + \sum_{b,d}\sum_{i=0}^{j}\sum_{l=0}^{p-1-j}{p-1\choose i}{p-2-i\choose p-2-j}{p-1-j\choose l}(-1)^{l+i}b^{j-i+l}d^{p-1+j-i+l} \\
\end{eqnarray*}
Again using that $\sum_{b\in \F_p}b^N=0$ unless $N\equiv 0 \pmod{p-1}$, $N\ne 0$, this is equal to:
\begin{eqnarray*}
(\star \star) &=& \sum_{b,d} {p-2 \choose j} (-1)^{j+1} b^{2(p-1)}d^{p-1} + \sum_{b,d}{p-2\choose j}(-1)^{j} b^{p-1}d^{2(p-1)} \\
&=& 0.
\end{eqnarray*}

Let us now do a very similar computation for $\lambda=1$. Now $\sum_{b,d}$ is over $b,d\in \F_{p}$, $d\ne 0$.
\begin{eqnarray*}
(\star \star)&=& \sum_{b,d} \left( \sum_{i=j+1}^{p-1}\sum_{n=0}^{k} {i-1 \choose j}{p-1 \choose i} {k \choose n}(-1)^{p-k} b^{2(p-1-k)+i+n+j}d^{p-1-k+i+n} + \right. \\
&&  \left. +\sum_{i=0}^{j}\sum_{l=0}^{p-1-k} {p-1 \choose p-1-i}{p-2-i \choose p-2-j} 
 {p-1-k \choose l} (-1)^{i+l} b^{j-i+l}d^{p-1+k-i+l}\right) 
\end{eqnarray*}
Again, this is zero unless $j\equiv k \pmod{p-1}$. If $j=0,k=p-1$, it is equal to
\begin{eqnarray*}
(\star \star)&=& \sum_{b,d}  \sum_{i=1}^{p-1}\sum_{n=0}^{p-1} {p-1 \choose i} {p-1 \choose n}(-1) b^{i+n}d^{i+n} +  \sum_{b,d} b^{0}d^{2(p-1)} \\
&=& -\sum_{b,d} \left( \sum_{i=1}^{p-1}{p-1 \choose i} {p-1 \choose p-1-i} b^{p-1}d^{p-1} +  b^{2(p-1)}d^{2(p-1)}\right) \\
&=& - \sum_{i=0}^{p-1}{p-1 \choose i}^2 \sum_{b,d} b^{p-1}d^{p-1} \\
&=& - \sum_{i=0}^{p-1}{p-1 \choose i}^2 = 0. \\
\end{eqnarray*}

If $j=k$, 
\begin{eqnarray*}
(\star \star)&=& \sum_{b,d} \left( \sum_{i=j+1}^{p-1}\sum_{n=0}^{j} {i-1 \choose j}{p-1 \choose i} {j \choose n}(-1)^{p-j} b^{2(p-1)-j+i+n}d^{p-1-j+i+n} + \right. \\
&&  \left. +\sum_{i=0}^{j}\sum_{l=0}^{p-1-j} {p-1 \choose p-1-i}{p-2-i \choose p-2-j} 
 {p-1-j \choose l} (-1)^{i+l} b^{j-i+l}d^{p-1+j-i+l}\right) \\
 &=& \sum_{b,d}  {p-2 \choose j} (-1)^{p-j} b^{3(p-1)}d^{2(p-1)} + \\
 && + \sum_{b,d} {p-2 \choose p-2-j}  (-1)^{p-1-j} b^{p-1}d^{2(p-1)}=\\
 &=& {p-2 \choose j} (-1)^{p-j} + {p-2 \choose p-2-j}  (-1)^{p-1-j} = 0.\\
\end{eqnarray*}

So, $(\star \star) =0$ and the vector $x_1^{p-1}\otimes y_2^{p-1}-x_2^{p-1}\otimes y_1^{p-1}$ is singular. 

\end{proof}

\begin{lemma}\label{hbegins}
There are no singular vectors in $M_{0,c}(S^{p-1}\h)_{i}$ for $i<p-1$, and the space of singular vectors in $M_{0,c}(S^{p-1}\h)_{p-1}$ is isomorphic to $S^{p-1}\h$ as a $GL_{2}(\F_p)$-representation. 
\end{lemma}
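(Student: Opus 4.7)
The plan is to combine Corollary~\ref{1block} (the block-of-size-one statement for $L_{0,c}(S^{p-1}\h)$ at generic $c$) with a composition-factor count for the graded pieces $S^m\h^*\otimes S^{p-1}\h$. I first argue that any irreducible $G$-subrepresentation $\sigma$ of the singular-vector space of $M_{0,c}(S^{p-1}\h)$ in a degree $m>0$ must be isomorphic to $S^{p-1}\h$: Lemma~\ref{homexist} produces an $H_{0,c}$-surjection $M_{0,c}(\sigma)[-m]\twoheadrightarrow\langle\sigma\rangle$ whose unique simple quotient is $L_{0,c}(\sigma)[-m]$; as $\langle\sigma\rangle\subset M_{0,c}(S^{p-1}\h)$, Corollary~\ref{1block} forces this simple quotient to be $L_{0,c}(S^{p-1}\h)[-m]$, and hence $\sigma\cong S^{p-1}\h$. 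The space of singular vectors in degree $m$ is therefore contained in the sum of all $S^{p-1}\h$-subrepresentations of $S^m\h^*\otimes S^{p-1}\h$; since $S^{p-1}\h$ is simple, its multiplicity as a subrepresentation is at most its multiplicity as a composition factor.

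The second step is a Grothendieck-group computation of this multiplicity. By Lemma~\ref{duals}, $S^m\h^*\otimes S^{p-1}\h\cong (S^m\h\otimes S^{p-1}\h)\otimes(\det)^{-m}$, so I want the multiplicity of $S^{p-1}\h\otimes(\det)^m$ in $S^m\h\otimes S^{p-1}\h$. Iterating Lemma~\ref{tensorprod} yields the Clebsch--Gordan-type identity
$$[S^m\h\otimes S^{p-1}\h]=\sum_{k=0}^{m}[S^{m+p-1-2k}\h\otimes(\det)^k]$$
in $K_0(G)$, valid for $0\le m\le p-1$. When $m+p-1-2k<p$ the summand is already irreducible, and it contributes $S^{p-1}\h\otimes(\det)^m$ only if $m=0$ (using $(\det)^{p-1}=\triv$). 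When $m+p-1-2k\ge p$, I apply Lemma~\ref{reducibles} with $j=m-1-2k$ and $n=1$, together with Lemma~\ref{tensorprod}, to write down the three composition factors $S^{j-1}\h\otimes\det$, $S^{j+1}\h$, and $S^{p-j-2}\h\otimes(\det)^{j+1}$ (each twisted by the outer $(\det)^k$); a case check shows the only appearance of $S^{p-1}\h\otimes(\det)^m$ is via the middle factor $S^{j+1}\h$ at $j=p-2$, which forces $m=p-1$, $k=0$, and contributes exactly one copy. So the multiplicity of $S^{p-1}\h$ in $S^m\h^*\otimes S^{p-1}\h$ is $0$ for $0<m<p-1$ and $1$ for $m=p-1$.

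Putting the two steps together, there are no singular vectors in positive degrees below $p-1$, and the singular-vector space in degree $p-1$ is a $G$-subrepresentation of a single copy of $S^{p-1}\h$. Lemma~\ref{oneissing} supplies a nonzero singular vector in degree $p-1$, and since $S^{p-1}\h$ is irreducible, this forces the singular-vector space to equal that copy. The step I expect to be the main obstacle is the Grothendieck-group bookkeeping in step two: careful handling of the ranges $k\le (m-1)/2$ versus $k\ge m/2$, the degenerate cases where a term of Lemma~\ref{reducibles} vanishes, and the collapse $(\det)^{p-1}=\triv$ all need to be tracked to ensure no spurious copy of $S^{p-1}\h$ is missed or double-counted.
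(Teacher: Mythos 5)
Your proposal is correct and follows essentially the same route as the paper: Corollary \ref{1block} forces every composition factor of the singular-vector space to be $S^{p-1}\h$, a Clebsch--Gordan computation via Lemmas \ref{tensorprod} and \ref{reducibles} shows that $S^{p-1}\h$ occurs as a composition factor of $S^{m}\h^*\otimes S^{p-1}\h$ with multiplicity $0$ for $0<m<p-1$ and $1$ for $m=p-1$, and Lemma \ref{oneissing} supplies existence. The only difference is cosmetic: you make explicit, via Lemma \ref{homexist}, why the block statement constrains the lowest weight of the submodule generated by a singular vector, a step the paper leaves implicit.
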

\begin{proof}

From the previous lemma it follows that the space of singular vectors in $M_{0,c}(S^{p-1}\h)_{p-1}$ is nonzero, and from Lemma \ref{blocks} that all the composition factors of it are isomorphic to $S^{p-1}\h$. We will now show that for $0\le i\le p-1$, $M_{0,c}(S^{p-1}\h)_{i}\cong S^{i}\h^*\otimes S^{p-1}\h$ has no composition factors isomorphic to $S^{p-1}\h$ unless $i=0$ or $i=p-1$, in which case it has one. The claim follows from this. 

Using Proposition \ref{tensorprod},  in the Grothendieck group $K_0(GL_{2}(\F_p))$ we have, for $0\le i \le p-1$: 
\begin{eqnarray*}
[S^{i}\h^*\otimes S^{p-1}\h] &=& [S^{i}\h \otimes S^{p-1}\h \otimes (\det)^{-i}] \\
& = & [S^{p+i-1}\h \otimes (\det)^{-i}] + [S^{p+i-3}\h \otimes (\det)^{-i+1}]+  \ldots  +[S^{p-1-i}\h] .
\end{eqnarray*}
The representation $S^{p-1}\h$ only appears on this list of representations $S^{p+i-1-2j} \h\otimes (\det)^{-i+j}$ for $0\le j\le i$, when $i=0$. Some of the representations on the list are reducible, namely the ones with $i-1-2j\ge 0$. Decomposing them by Proposition \ref{reducibles}, 
$$ [S^{p+i-1-2j} \h \otimes (\det )^{-i+j}] = [ S^{i-1-2j} \h \otimes \h \otimes (\det)^{-i+j}] + [S^{p-1-i+2j}\h \otimes (\det)^{-j}] =$$
$$= [ S^{i-2-2j} \h  \otimes (\det)^{-i+j+1}]+ [S^{i-2j} \h  \otimes (\det)^{-i+j}]+ [S^{p-1-i+2j}\h \otimes (\det)^{-j}]$$
Here, we follow the convention that $S^{k}\h=0$ if $k<0$. In this decomposition all representations are irreducible. Using that $i-1-2j\ge 0$, $p-1\ge i\ge 0$ and $i\ge j\ge 0$, we see that $S^{p-1}\h$ appears on this list only once, namely when $j=0$, $i=p-1$.
\end{proof}

This space is generated by the singular vector from Lemma \ref{oneissing}. Its explicit basis, which we will need in computations below, is given by $v_0,\ldots, v_{p-1}\in S^{p-1}\h^*\otimes S^{p-1}\h$:
$$v_{k}=\sum_{i=0}^{k}(-1)^i x_1^{k-i}x_2^{p-1-k+i}\otimes y_1^{p-1-i}y_2^i+\sum_{i=k}^{p-1}(-1)^i x_1^{p-1+k-i}x_2^{i-k}\otimes y_1^{p-1-i}y_2^i.$$

Remember from section \ref{babychar} that the algebra of invariants $(S\h^*)^{GL_2(\F_p)}$ is a polynomial algebra generated by  $Q_0$ and $Q_1$ of degrees $p^2-p$ and $p^2-1$, constructed explicitly as:
$$Q_0=\left| \begin{array}{cc} x_1^p & x_2^p \\ x_1 & x_2 \end{array} \right|^{p-1}=\left( x_1^px_2-x_1x_2^p)\right)^{p-1}=\left( x_1x_2(x_1^{p-1}-x_2^{p-1})\right)^{p-1}$$
$$Q_1=\frac{\left| \begin{array}{cc} x_1^{p^2} & x_2^{p^2} \\ x_1 & x_2 \end{array} \right| }{\left| \begin{array}{cc} x_1^p & x_2^p \\ x_1 & x_2 \end{array} \right|}=\frac{x_1^{p^2}x_2-x_1x_2^{p^2}}{x_1^px_2-x_1x_2^p}=\frac{x_1^{p^2-1}-x_2^{p^2-1}}{x_1^{p-1}-x_2^{p-1}}=\sum_{i=0}^{p}x_1^{(p-1)i}x_2^{(p-1)(p-i)}.$$
Alternatively, the determinant $\left| \begin{array}{cc} x_1^p & x_2^p \\ x_1 & x_2 \end{array} \right|$ can be described as a product of all the linear polynomials of the form $x_1+ax_2$ and $x_2$. Similarly, $Q_0$ is the product of all the nonzero linear polynomials $ax_1+bx_2$, and $Q_1$ is the product of all the irreducible monic quadratic polynomials $x_1^2+ax_1x_2+bx_2^2$.

Also remember that at $t=0$, the space $(S\h^*)^{GL_2(\F_p)}_{+}\otimes \tau \subseteq M_{0,c}(\tau)$ is always a subspace of $J_{0,c}(\tau)$, and that the spaces $Q_1\otimes \tau$ and $Q_0\otimes \tau$ consist of singular vectors. 


Now define $M$ to be the quotient of $M_{0,c}(S^{p-1}\h)$ by the submodule generated by the singular vectors from the previous two lemmas $\mathrm{span}_{\Bbbk}\{v_0,\ldots ,v_{p-1} \}$, and by the invariants $Q_0\otimes S^{p-1}\h$ and $Q_0\otimes S^{p-1}\h$. To calculate the character of $M$, we investigate the independence of these generators.   

\begin{prop}
The submodule generated by $ Q_{1}\otimes S^{p-1}\h$ is contained in the submodule of generated by $v_0,\ldots v_{p-1}$.

The intersection of the submodule generated by $Q_{0}\otimes S^{p-1}\h$ and the submodule generated by  $v_0,\ldots v_{p-1}$
is generated by $Q_{0}v_0,\ldots Q_0v_{p-1}$ in degree $(p^2-1)(p-1)$.
\end{prop}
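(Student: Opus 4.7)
The plan is to reduce the first statement to the identity $\det A = c\,Q_1$ with $c \in \Bbbk^{\times}$, where $A \in M_p(S\h^*)$ is the change-of-basis matrix whose $(k,i)$-entry $A_{ki}\in S^{p-1}\h^*$ is the coefficient of $y_1^{p-1-i}y_2^i$ in $v_k$. Granted this, Cramer's rule yields
\[
c\,Q_1\cdot (1\otimes y_1^{p-1-j}y_2^j) \;=\; \sum_{k=0}^{p-1} (\operatorname{adj} A)_{jk}\,v_k \;\in\; \langle v_0,\ldots,v_{p-1}\rangle
\]
for every $j$; since the generators $Q_1 \otimes y_1^{p-1-j}y_2^j$ of $\langle Q_1 \otimes S^{p-1}\h\rangle$ all lie in $\langle v_0,\ldots,v_{p-1}\rangle$, the first inclusion follows.

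To prove the identity, note that $\{v_0,\ldots,v_{p-1}\}\subset M_{0,c}(S^{p-1}\h)_{p-1}$ and $\{1\otimes y_1^{p-1-i}y_2^i\}\subset M_{0,c}(S^{p-1}\h)_0$ span $G$-equivariantly isomorphic copies of the irreducible representation $S^{p-1}\h$, so the determinant of the transition matrix is $G$-invariant in $S\h^*$. By Dickson's theorem (Section~\ref{babychar}), $(S\h^*)^G = \Bbbk[Q_0,Q_1]$, and comparing degrees ($\deg\det A = p(p-1) = \deg Q_1$, while $\deg Q_0 = p^2-1$) forces $\det A = cQ_1$ for a scalar $c$. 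Nonvanishing of $c$ is checked by extracting the coefficient of $x_1^{p(p-1)}$: from the explicit formula for $v_k$, the $x_1^{p-1}$-coefficient of $A_{ki}$ is $(-1)^k$ if $i=k$, equal to $1$ if $(k,i)=(p-1,0)$, and zero otherwise, so a short permutation count shows that only $\sigma = \mathrm{id}$ contributes, with value $(-1)^{p(p-1)/2}\neq 0$, matching (up to sign) the coefficient of $x_1^{p(p-1)}$ in $Q_1$.

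For the second statement, the inclusion $\langle Q_0 v_0,\ldots,Q_0v_{p-1}\rangle \subseteq \langle Q_0\otimes S^{p-1}\h\rangle \cap \langle v_0,\ldots,v_{p-1}\rangle$ is immediate. For the reverse, centrality of $Q_0 \in (S\h^*)^G$ in $H_{0,c}$ (holding at $t=0$) gives $\langle Q_0\otimes S^{p-1}\h\rangle = Q_0\cdot M_{0,c}(S^{p-1}\h)$ and $\langle Q_0v_i\rangle = Q_0\cdot\langle v_0,\ldots,v_{p-1}\rangle$, so the claim becomes that multiplication by $Q_0$ is injective on $\overline{M}:= M_{0,c}(S^{p-1}\h)/\langle v_0,\ldots,v_{p-1}\rangle$. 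By the first part, $Q_1\cdot M_{0,c}(S^{p-1}\h) \subseteq \langle v_i\rangle$, so $\overline{M}$ is annihilated by $Q_1$ and is a finitely generated module over $S\h^*/(Q_1)$. The presentation
\[
0 \longrightarrow (S\h^*)^p \xrightarrow{\,A\,} (S\h^*)^p \longrightarrow \overline{M} \longrightarrow 0,
\]
in which $A$ is injective because $\det A = cQ_1\neq 0$, has length $1$, so by Auslander--Buchsbaum over the graded regular $2$-dimensional ring $S\h^*$ the module $\overline{M}$ has depth $1 = \dim \overline{M}$ and is Cohen--Macaulay, hence has no embedded associated primes. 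Its associated primes are then minimal primes of $(Q_1)$, i.e., principal ideals generated by the (irreducible, degree-$2$) factors of $Q_1$. Since $Q_0$ factors as a product of nonzero linear forms, it lies in none of these primes, so multiplication by $Q_0$ is injective on $\overline{M}$.

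The main obstacle I anticipate is the verification of $\det A = cQ_1$ with $c\neq 0$: the $G$-invariance argument pins down $\det A$ up to a scalar for free from degree considerations, but showing that this scalar is nonzero requires a genuine (if short) combinatorial extraction from the explicit formula for the $v_k$. Once the identity is established, the remainder is routine commutative algebra, leveraging the coprimality of the Dickson invariants $Q_0$ and $Q_1$.
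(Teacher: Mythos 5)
Your proposal is correct. The first half coincides with the paper's argument: both hinge on the identity $\det \mathbf{A}=\pm Q_1$, proved by $G$-invariance plus a degree count plus extraction of the $x_1^{p(p-1)}$ coefficient (the paper records the constant as $(-1)^{(p-1)/2}$, which agrees with your $(-1)^{p(p-1)/2}$ since $p$ is odd), and both then get $Q_1\otimes S^{p-1}\h\subseteq\langle v_0,\dots,v_{p-1}\rangle$ via the adjugate. Where you genuinely diverge is the second half. The paper stays with the linear system $\mathbf{A}\overrightarrow{h}=Q_0\overrightarrow{f}$: multiplying by the adjugate gives $\pm Q_1\overrightarrow{h}=Q_0\tilde{\mathbf{A}}\overrightarrow{f}$, and since $Q_0$ (a product of linear forms) and $Q_1$ (a product of irreducible quadratics) are coprime in the UFD $S\h^*$, every $h_i$ is divisible by $Q_0$ — a two-line elementary argument. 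You instead recast the claim as injectivity of multiplication by $Q_0$ on $\overline{M}=M_{0,c}(S^{p-1}\h)/\langle v_i\rangle$ and prove it homologically: the length-one free resolution by $\mathbf{A}$ plus Auslander--Buchsbaum gives that $\overline{M}$ is Cohen--Macaulay of dimension one, so its associated primes are the height-one primes over $(Q_1)$, none of which contain $Q_0$. Both are valid; the paper's route is shorter and uses nothing beyond coprimality of the Dickson invariants, while yours isolates the structurally relevant fact (that $\overline{M}$ is a maximal Cohen--Macaulay $S\h^*/(Q_1)$-module, so \emph{any} element coprime to $Q_1$ is a nonzerodivisor on it), which is more robust and explains why the intersection computation works rather than merely verifying it.
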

\begin{proof}

Let $l=0$ or $1$, and let us study the intersection of the submodule generated by $Q_{l}\otimes S^{p-1}\h$ and the submodule $V$ generated by  $v_0,\ldots v_{p-1}$. This intersection is a graded submodule of $M_{0,c}(S^{p-1}\h)$, containing all elements of the form
\begin{equation*}h_0v_0+h_1v_1+\ldots h_{p-1}v_{p-1}=Q_{l} f,\tag{*}\label{star}\end{equation*}
where $h_{l}(x_1,x_2)\in S^{n}\h^*$ for some $n\ge 0$, and $f\in S^{n+p-1-\deg(Q_l)}\h^*\otimes S^{p-1}\h$. 

One can think of this as a linear equation in $S\h^*\otimes S^{p-1}\h$ with unknowns $h_i$ and $f$. Reading off the coefficients with  $y_1^{p-1-i}y_2^i \in S^{p-1}\h$, we can alternatively think of it as a system of $p$ linear equations in $S\h^*$, with unknowns $h_i$ and $f_i\in S\h^*$ and $f=\sum_i  f_i\otimes y_1^{p-1-i}y_2^i$. The left hand side can then be written as
$$\left[ \begin{array}{cccccc} 
x_1^{p-1}+x_2^{p-1} & x_1x_2^{p-2} & \ldots & x_1^{k}x_2^{p-1-k} & \ldots &  x_1^{p-1}  \\
-x_1^{p-2}x_2 & -(x_1^{p-1}+x_2^{p-1}) & \ldots & -x_1^{k-1}x_2^{p-k} & \ldots &  -x_1^{p-2}x_2\\
x_1^{p-3}x_2^2 & x_1^{p-2}x_2  & \ldots & x_1^{k-2}x_2^{p-k+1} & \ldots & x_1^{p-3}x_2^2\\  
\vdots & \vdots & & & & \vdots \\
(-1)^k x_1^{p-1-k}x_2^k & (-1)^kx_1^{p-k}x_2^{k-1} & \ldots & (-1)^k(x_1^{p-1}+x_2^{p-1} ) & \ldots &   x_1^{p-1-k}x_2^k \\
\vdots & \vdots & & & & \vdots \\
x_2^{p-1} & x_1x_2^{p-2} & \ldots & x_1^{k}x_2^{p-1-k} & \ldots & x_1^{p-1}+x_2^{p-1}
 \end{array}\right]\left[ \begin{array}{c}h_0 \\ h_1 \\  h_2 \\ \vdots \\ h_k \\ \vdots \\ h_{p-1} \end{array}\right].$$ The $i$-th row represents the coefficient of $y_1^{p-1-i}y_2^i$, and the $k$-th column corresponds to $v_k$. Call this matrix $\mathbf{A}$, denote the vector with entries $h_i$ by $\overrightarrow{h}$ and the vector with entries $f_i$ by $\overrightarrow{f}$. The system of equations in matrix form can then be written as $$\mathbf{A}\overrightarrow{h}=Q_l\overrightarrow{f} .$$

Next, we need a lemma. 
\begin{lemma}
$\det \mathbf{A}=(-1)^{(p-1)/2}Q_1$.
\end{lemma}
\begin{proof}
Factoring out the coefficient $-1$ from all even rows accounts for the sign $(-1)^{(p-1)/2}$. 
Direct computation shows that $\det \mathbf{A}'$ is invariant under $GL_2(\F_p)$ action, and its degree is $p(p-1)$. Hence, it is a multiple of $Q_1$. The coefficient of $x_1^{p(p-1)}$ in both the determinant and $Q_1$ is equal to $1$, which completes the proof. 
\end{proof}

We return to the proof of the proposition and to the equation
$\mathbf{A}\overrightarrow{h}=Q_{l}\overrightarrow{f}.$

For $l=1$, writing $\tilde{\mathbf{A}}$ the adjugate matrix to $\mathbf{A}$ and using $\mathbf{A}^{-1}=\frac{1}{\det \mathbf{A}}\tilde{\mathbf{A}}$, we get $$\overrightarrow{h}=\frac{(-1)^{(p-1)/2}}{Q_1}\tilde{\mathbf{A}}Q_{1}\overrightarrow{f}= (-1)^{(p-1)/2}\tilde{\mathbf{A}}\overrightarrow{f}.$$ So, for every polynomial $f$ there exist polynomials $h_0,\ldots h_{p-1}$ satisfying  (\ref{star}). Picking $f\in S^{0}\h^*\otimes S^{p-1}\h$, it follows that $Q_{1}\otimes S^{p-1}\h$ is contained in $V$.



If $l=0$, then using that $Q_0$ and $Q_1$ are algebraically independent and that $\det \mathbf{A}=(-1)^{(p-1)/2}Q_1$, it follows that if  $\overrightarrow{h}$ and $\overrightarrow{f}$ have polynomial entries and satisfy $\mathbf{A}\overrightarrow{h}=Q_{0}\overrightarrow{f}$, then every entry of $\overrightarrow{h}$ is divisible by $Q_0$. From this it follows that the intersection of $V$ and $\Bbbk Q_0\otimes S^{p-1}$, consisting of vectors of the form (\ref{star}), is the submodule generated by $Q_{0}v_0,\ldots Q_0v_{p-1}$ in degree $(p^2-1)(p-1)$.

\end{proof}

As explained above, the purpose of proving the previous proposition was to conclude:

\begin{cor}\label{charm}Let $M$ be the quotient of $M_{0,c}(S^i\h)$ by the $H_{0,c}(GL_2(\F_p),\h)$-submodule generated by singular vectors $v_0,\ldots v_{p-1}$ in degree $p-1$, $Q_1\otimes S^i\h$ in degree $p^2-p$ and $Q_0\otimes S^i\h$ in degree $p^2-1$. Then its character is
$$\chi_{M}(z)=\chi_{M_{0,c}(S^{p-1}\h)}(z)(1-z^{p-1})(1-z^{p^2-1})$$
and its Hilbert series is a polynomial
$$\mathrm{Hilb}_{M}(z)=p\frac{(1-z^{p-1})(1-z^{p^2-1})}{(1-z)^2}.$$
\end{cor}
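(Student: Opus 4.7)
The plan is an inclusion–exclusion count of the relations being imposed. Write $V$ for the $H_{0,c}(GL_2(\F_p),\h)$-submodule of $M_{0,c}(S^{p-1}\h)$ generated by the singular vectors $v_0,\ldots,v_{p-1}$, and $W$ for the submodule generated by $Q_0 \otimes S^{p-1}\h$. By the first half of the preceding proposition, the submodule generated by $Q_1\otimes S^{p-1}\h$ already lies inside $V$ and is therefore redundant; hence $M = M_{0,c}(S^{p-1}\h)/(V+W)$ and
$$\chi_M \;=\; \chi_{M_{0,c}(S^{p-1}\h)} \;-\; \chi_V \;-\; \chi_W \;+\; \chi_{V\cap W}$$
in the Grothendieck group of graded $G$-representations.

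The key step is to identify each of $V$, $W$, and $V\cap W$ with a grading shift of the Verma module $M_{0,c}(S^{p-1}\h)$. For $W$, multiplication by $Q_0$ is injective on the free $S\h^*$-module $M_{0,c}(S^{p-1}\h)\cong S\h^*\otimes S^{p-1}\h$, so $W$ is isomorphic to a copy of $M_{0,c}(S^{p-1}\h)$ whose lowest weight sits in degree $p^2-1=\deg Q_0$, giving $\chi_W(z)=z^{p^2-1}\chi_{M_{0,c}(S^{p-1}\h)}(z)$. For $V$, Lemma \ref{homexist} produces a surjection from $M_{0,c}(S^{p-1}\h)$, shifted to start in degree $p-1$, onto $V$; to show this is an isomorphism I would reuse the matrix $\mathbf{A}$ from the preceding proof. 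A syzygy $\sum h_i v_i=0$ is exactly $\mathbf{A}\overrightarrow{h}=0$, and multiplying by the adjugate gives $(\det\mathbf{A})h_i = \pm Q_1 h_i = 0$, so each $h_i=0$ since $Q_1$ is a nonzero divisor in $S\h^*$. Hence $\chi_V(z)=z^{p-1}\chi_{M_{0,c}(S^{p-1}\h)}(z)$. The second half of the preceding proposition identifies $V\cap W$ with $Q_0 V$, and injectivity of multiplication by $Q_0$ on $V\cong M_{0,c}(S^{p-1}\h)$ then gives $\chi_{V\cap W}(z)=z^{p^2+p-2}\chi_{M_{0,c}(S^{p-1}\h)}(z)$.

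Plugging these into the inclusion–exclusion, the common factor $\chi_{M_{0,c}(S^{p-1}\h)}$ pulls out and the remaining polynomial factors as $1-z^{p-1}-z^{p^2-1}+z^{p^2+p-2}=(1-z^{p-1})(1-z^{p^2-1})$, yielding the stated character formula. The Hilbert series formula follows by taking dimensions and substituting $\mathrm{Hilb}_{M_{0,c}(S^{p-1}\h)}(z)=p/(1-z)^2$ from Proposition \ref{charMLN}; the result is a polynomial because both $(1-z^{p-1})/(1-z)$ and $(1-z^{p^2-1})/(1-z)$ are. There is no genuinely new obstacle here: the whole argument is bookkeeping on top of the preceding proposition, whose identification of the syzygies among the $v_i$ via $\det\mathbf{A}=\pm Q_1$ does all the heavy lifting.
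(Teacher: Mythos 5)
Your proof is correct and is exactly the inclusion--exclusion argument the paper intends (the paper states the corollary as an immediate consequence of the preceding proposition without writing out the bookkeeping). You also correctly supply the one detail the paper leaves implicit --- freeness of the submodule $V$ over $S\h^*$ via the adjugate of $\mathbf{A}$ and $\det\mathbf{A}=\pm Q_1$ --- and you use the correct degree $p^2+p-2$ for $Q_0v_i$, where the paper's ``$(p^2-1)(p-1)$'' is a typo.
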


\begin{prop}\label{0,p-1,final}
$L_{0,c}(S^{p-1}\h)=M.$  
\end{prop}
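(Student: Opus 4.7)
The plan is to establish $M\cong L:=L_{0,c}(S^{p-1}\h)$ by first showing $L$ is a quotient of $M$, then arguing $M$ is irreducible. For the first step, all three generator classes of the submodule used to define $M$ lie in $J_{0,c}(S^{p-1}\h)=\ker B$: by Lemma \ref{oneissing} together with Lemma \ref{hbegins}, the entire space $\mathrm{span}(v_0,\dots,v_{p-1})$ is the $G$-subrepresentation of singular vectors in $M_{0,c}(\tau)_{p-1}$; and $Q_0\otimes\tau,Q_1\otimes\tau$ lie in $((S\h^*)^G)_+\otimes\tau\subseteq J_{0,c}(\tau)$. Hence the projection $M_{0,c}(\tau)\twoheadrightarrow L$ descends to $M\twoheadrightarrow L$.

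For irreducibility I argue by contradiction: suppose $0\ne K\subsetneq M$ is a graded submodule and let $k$ be minimal with $K_k\ne 0$. Then $K_k$ is a $G$-subrepresentation of $M_k$ consisting of singular vectors (since $D_yK_k\subseteq K_{k-1}=0$), and $k\ge 1$ because $M_0\cong\tau$ embeds into $L$. By Corollary \ref{1block} every composition factor of $M$ is a grading shift $L[j]$, whose lowest weight is $S^{p-1}\h$; so $K_k$ must contain a $G$-subrepresentation isomorphic to $S^{p-1}\h$ consisting of singular vectors. The proof thereby reduces to establishing
\[
(\star)\quad M_k\ \text{has no singular $G$-subrepresentation of type $S^{p-1}\h$ for any $k\ge 1$}.
\]

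To prove $(\star)$, I would perform a multiplicity count. Using Propositions \ref{tensorprod}, \ref{reducibles}, and Lemma \ref{duals}, one decomposes each graded piece $[S^k\h^*\otimes S^{p-1}\h]$ into irreducibles and extracts the multiplicity of $S^{p-1}\h$. The character formula $\chi_M(z)=\chi_{M_{0,c}(\tau)}(z)(1-z^{p-1})(1-z^{p^2-1})$ of Corollary \ref{charm} then determines the multiplicity of $S^{p-1}\h$ in $[M_k]$. The previous proposition's independence statement guarantees that the two factors $(1-z^{p-1})$ and $(1-z^{p^2-1})$ correspond exactly to the independent quotients by $\mathrm{span}(v_0,\dots,v_{p-1})$ and by $Q_0\otimes\tau$ (with $Q_1\otimes\tau$ automatically contained in the first and the overlap in degree $(p^2-1)(p-1)$ consisting of $Q_0v_i$). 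Consequently every copy of $S^{p-1}\h$ appearing in $M_k$ for $k\ge 1$ arises as an $\h^*$-multiple of lower-degree vectors — in particular lifting any hypothetical singular $S^{p-1}\h$ back to $M_{0,c}(\tau)_k$ would yield a singular subrepresentation not contained in the submodule generated by $v_i,Q_0\otimes\tau,Q_1\otimes\tau$, contradicting Lemma \ref{hbegins} and the completeness of the count encoded in the two Koszul-like factors.

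The main obstacle is this multiplicity bookkeeping: verifying that the product $(1-z^{p-1})(1-z^{p^2-1})$ really exhausts all singular $S^{p-1}\h$-subrepresentations of $M_{0,c}(\tau)$ and leaves no unaccounted occurrences in positive degrees of $M$. Once $(\star)$ is established, $K$ cannot exist, so $M$ is irreducible and $M\cong L$.
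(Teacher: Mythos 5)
Your first step (that $M$ surjects onto $L_{0,c}(S^{p-1}\h)$ because $v_0,\dots,v_{p-1}$, $Q_0\otimes\tau$ and $Q_1\otimes\tau$ all lie in $\Ker B$) is correct, and the reduction of irreducibility to the statement $(\star)$ is a legitimate strategy (though note that passing from ``$[S^{p-1}\h]$ is a composition factor of $K_k$'' to ``$K_k$ \emph{contains} a subrepresentation isomorphic to $S^{p-1}\h$'' needs a word of justification, e.g.\ that $S^{p-1}\h$ is the Steinberg module and hence projective and injective). The genuine gap is in the proposed proof of $(\star)$, which is where all the content lies. A multiplicity count of $[S^{p-1}\h]$ inside $[M_k]$ cannot by itself establish $(\star)$: even if copies of $S^{p-1}\h$ do occur in $M_k$ for some $k\ge 1$, you would still have to show by Dunkl-operator computations that they are not singular, and your assertion that such copies ``arise as $\h^*$-multiples of lower-degree vectors'' is irrelevant to singularity (the singular vectors $v_i$ themselves lie in $\h^*\,M_{0,c}(\tau)_{p-2}$). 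Moreover, the contradiction you invoke with Lemma \ref{hbegins} is unavailable for $k>p-1$: that lemma controls singular vectors of $M_{0,c}(S^{p-1}\h)$ only in degrees $\le p-1$, and a lift to $M_{0,c}(\tau)_k$ of a vector singular in $M_k$ is only singular modulo the submodule you have quotiented by, not singular in the Verma module itself. So $(\star)$ is asserted, not proved.

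The paper takes a different and essentially character-theoretic route that avoids degree-by-degree analysis entirely. Since every composition factor of $M$ is a shift of $L=L_{0,c}(S^{p-1}\h)$ (Corollary \ref{1block}), one has $\chi_L(z)F(z)=\chi_M(z)$ for some polynomial $F$ with nonnegative integer coefficients, hence $\overline{h}(z)F(z)=(1-z^{p-1})(1-z^{p^2-1})$ where $\chi_L=\chi_{M_{0,c}(\tau)}\cdot\overline{h}$. The key input is then a divisibility argument: evaluating the graded trace $\widetilde{ch}_L(z,g)=\mathrm{tr}|_{S^{p-1}\h}(g)\,\overline{h}(z)/\det_{\h^*}(1-zg)$ at semisimple elements $g$ with distinct eigenvalues (over $\F_p$ and over $\F_{p^2}$), and using that this must be a polynomial in $z$ while $\mathrm{tr}|_{S^{p-1}\h}(g)\ne 0$, forces $\overline{h}$ to be divisible by $1-z^{p^2-1}$. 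Combining this with the fact that $\overline{h}=1-z^{p-1}+\cdots$ (which is exactly what Lemma \ref{hbegins} does give you) yields $\overline{h}=(1-z^{p-1})(1-z^{p^2-1})$ and $F=1$, i.e.\ $M=L$. If you want to salvage your approach, you would need to replace the multiplicity heuristic by an argument of comparable strength; the divisibility trick is the missing idea.
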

\begin{proof}
By Lemma \ref{blocks}, the irreducible representation $L_{0,c}(S^{p-1}\h)$ forms a block of size one. That means that all the irreducible composition factors that appear in the decomposition of $M_{0,c}(S^{p-1}\h)$ and of $M$, are isomorphic to $L_{0,c}(S^{p-1}\h)[m]$. 

As a consequence, the character of $L_{0,c}(S^{p-1}\h)$ divides the character of $M$; 
$$\chi_{L_{0,c}(S^{p-1}\h)}(z)F(z)=\chi_M(z),$$ for some polynomial $F(z)$ with positive integer coefficients. The character of $L_{0,c}(S^{p-1}\h)$ is of the form 
$$\chi_{L_{0,c}(S^{p-1}\h)}(z)=\chi_{M_{0,c}(S^{p-1}\h)}(z)\overline{h}(z)$$ 
 for some polynomial $\overline{h}$ with integer coefficients ($\overline{h}$ is divisible by $(1-z)^2$, as $L_{0,c}(S^{p-1})$ is finite dimensional and $M_{0,c}(S^{p-1})$ has quadratic growth). Substituting this and the character formula for $M$ in the above equation, we get that 
$$\overline{h}(z)F(z)=(1-z^{p-1})(1-z^{p^2-1}).$$

Let us define another version of the character which will enable us to compute $\overline{h}$. For $V=\sum_{k}V_{k}$ a graded Cherednik algebra module, we define $\widetilde{ch}_{V}$ to be a function of a formal variable $z$ and of a group element $g$, defined as
$$\widetilde{ch}_{V}(z,g)=\sum_{k} z^k \mathrm{tr}|_{V_k}(g).$$ It is then easy to see that
$$\widetilde{ch}_{M_{0,c}(S^{p-1}\h)}(z,g)=\mathrm{tr}|_{S^{p-1}\h}(g)\cdot \frac{1}{\det_{\h^*} (1-zg)},$$ 
so 
$$\widetilde{ch}_{M}(z,g)=\mathrm{tr}|_{S^{p-1}\h}(g)\cdot \frac{(1-z^{p-1})(1-z^{p^2-1})}{\det_{\h^*} (1-zg)}$$ $$\widetilde{ch}_{L_{0,c}(S^{p-1}\h)}(z,g)=\mathrm{tr}|_{S^{p-1}\h}(g)\cdot \frac{\overline{h}(z)}{\det_{\h^*} (1-zg)}.$$ 

Let $g\in GL_{2}(\F_p)$. It can be put to Jordan form over a quadratic extension $\F_q$ of $\F_p$, and assume it is diagonalizable with different eigenvalues, of the form $\mathrm{diag}(\lambda, \mu)$ 
with $\lambda\ne \mu \in \F_q$. Then $$\mathrm{tr}|_{S^{p-1}\h}(g)=\lambda^{p-1}+\lambda^{p-2}\mu+\ldots \mu^{p-1}=\frac{\lambda^p-\mu^p}{\lambda-\mu}\ne 0$$ and $\widetilde{ch}_{L_{0,c}(S^{p-1}\h)}(z,g)$ is a polynomial in $z$, so $$\frac{\overline{h}(z)}{\det_{\h^*} (1-zg)}=\frac{\overline{h}(z)}{(1-z\lambda^{-1})(1-z\mu^{-1})}$$ must be a polynomial in $z$ as well. By choosing all possible $\lambda$ and $\mu$ in $\F_p\subseteq \F_q$, this implies that $\overline{h}(z)$ is divisible by all linear polynomials of the form $1-z\lambda^{-1}$, and hence by their product $1-z^{p-1}$. If $\lambda$ and $\mu$ are in the extension $\F_q$ and not in $\F_p$, then the product $(1-z\lambda^{-1})(1-z\mu^{-1})$ is an irreducible quadratic polynomial with coefficients in $\F_p$ with a constant term $1$. All such polynomials can be obtained in this way, and $\overline{h}(z)$ is divisible by their product $(1-z^{p^2-1})/(1-z^{p-1}).$ From this we conclude that $\overline{h}(z)$ is divisible by $1-z^{p^2-1}.$ 

Let us write $$\overline{h}(z)=(1-z^{p^2-1})\phi(z)$$ for some polynomial $\phi$. Then  $$\phi(z)F(z)=1-z^{p-1}.$$ However, it follows from Lemma \ref{hbegins} that $\overline{h}$ is of the form $1-z^{p-1}+\ldots$, so $\phi(z)$ is of that form as well, and it follows that $\phi(z)=1-z^{p-1}$, $F(z)=1$ and $L_{0,c}(S^{p-1}\h)=M$.

\end{proof}

\subsection{Characters of $L_{t,c}(S^{i}\h)$ for $i=p-1$ and $t=1$}

Computing the character of $L_{1,c}(S^{p-1}\h)$ is very similar to computing the character of $L_{0,c}(S^{p-1}\h)$ in the previous section. We define a set of vectors analogous to $v_i$:
$$v'_{k}=\sum_{i=0}^{k}(-1)^i x_1^{p(k-i)}x_2^{p(p-1-k+i)}\otimes y_1^{p-1-i}y_2^i+\sum_{i=k}^{p-1}(-1)^i x_1^{p(p-1+k-i)}x_2^{p(i-k)}\otimes y_1^{p-1-i}y_2^i.$$

\begin{lemma}
The space $span_{\Bbbk}\{v'_0,\ldots, v'_{p-1}\} \subseteq S^{p(p-1)}\h^*\otimes S^{p-1}\h\cong M_{1,c}(S^{p-1}\h)_{p(p-1)}$ consists of singular vectors, and isomorphic to $S^{p-1}\h$ as a $GL_2(\F_p)$ representation. This is the only space of singular vectors in $M_{1,c}(S^{p-1}\h)_{p\cdot i}$ for $i=1,\ldots p-1$.
\end{lemma}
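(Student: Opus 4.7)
The plan is to combine the Frobenius-twist observation $v'_k = (F\otimes\mathrm{id})(v_k)$ with the $p$-th-power constraint of Corollary \ref{newp} and the block-of-size-one result Corollary \ref{1block}, mirroring the $t=0$ analysis already carried out in Lemma \ref{oneissing} and Lemma \ref{hbegins}.

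First I would record that the Frobenius map $F\colon S\h^*\to S\h^*$, $x_i\mapsto x_i^p$, is $GL_2(\F_p)$-equivariant, since every matrix entry of an element of $GL_2(\F_p)$ lies in $\F_p$ and is therefore fixed by the $p$-th power. Consequently $F\otimes\mathrm{id}$ carries $\mathrm{span}\{v_0,\dots,v_{p-1}\}\subseteq S^{p-1}\h^*\otimes S^{p-1}\h$ injectively onto $\mathrm{span}\{v'_0,\dots,v'_{p-1}\}\subseteq (S^{p-1}\h^*)^p\otimes S^{p-1}\h$, which is therefore isomorphic to $S^{p-1}\h$ as a $G$-representation by Lemma \ref{hbegins}.

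Next I would bound the singular subspace from above. By Corollary \ref{newp}, $J=J_{1,c}(S^{p-1}\h)$ is $S\h^*$-generated in degrees divisible by $p$, and the generators in degree $pm$ have $G$-composition factors drawn from $(S^m\h^*)^p\otimes S^{p-1}\h$, which is isomorphic as a $G$-representation to $S^m\h^*\otimes S^{p-1}\h$ since Frobenius twist is trivial on representations of $GL_2(\F_p)$. By Corollary \ref{1block} every composition factor of the $GL_2(\F_p)$-subrepresentation spanned by generators is isomorphic to $S^{p-1}\h$, and Lemma \ref{hbegins} then tells us $S^{p-1}\h$ occurs in $S^m\h^*\otimes S^{p-1}\h$, for $1\le m\le p-1$, only when $m=p-1$, and then exactly once. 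Hence $J$ has no generators in degrees $p,2p,\dots,(p-2)p$, and since $J$ is $S\h^*$-generated in degrees divisible by $p$ it must vanish in all positive degrees below $p(p-1)$; in particular $J_{p\cdot i}=0$ for $1\le i\le p-2$, while $J_{p(p-1)}$ has at most one copy of $S^{p-1}\h$ as a composition factor, forcing $\dim J_{p(p-1)}\le p$. Since positive-degree singular vectors lie in $J$, this rules out singular vectors in $M_{1,c}(S^{p-1}\h)_{p\cdot i}$ for $1\le i<p-1$ and caps the singular subspace in degree $p(p-1)$ at dimension $p$.

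To finish I need to exhibit at least one nonzero singular $v'_k$. The $G$-equivariance of $y\otimes v\mapsto D_y(v)$ makes the singular subspace $G$-stable, so once one vector of the irreducible $p$-dimensional space $\mathrm{span}\{v'_k\}$ is singular, the entire span is singular, and then the inclusions $\mathrm{span}\{v'_k\}\subseteq \text{(singular subspace)}\subseteq J_{p(p-1)}$ combined with the upper bound $\dim J_{p(p-1)}\le p$ collapse to equalities. For the verification I would compute $D_y(v'_k)$ directly, using $\partial_y(x^p)=0$ in characteristic $p$ (so at $t=1$ only the reflection part of the Dunkl operator contributes) together with the key identity
\[
\frac{(1-s)\,x_1^{ap}x_2^{bp}}{\alpha_s}\;=\;\alpha_s^{p-1}\,F\!\left(\frac{(1-s)\,x_1^a x_2^b}{\alpha_s}\right),
\]
which follows from $F$ being a $G$-equivariant ring endomorphism of $S\h^*$. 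Parametrizing $C_\lambda$ as in Proposition \ref{reflequiv} then reduces $D_y(v'_k)$ to a polynomial sum over $b,d\in\F_p$; the extra $\alpha_s^{p-1}=(x_1+bx_2)^{p-1}$ factor raises the $b$-degree by at most $p-1$ beyond the analogous $t=0$ expression in Lemma \ref{oneissing}, but the Frobenius twist on the inner polynomial keeps the monomial degrees in $b$ and $d$ within the vanishing range of Lemma 2.26 of \cite{BalChen}.

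The main obstacle is this last calculation: although structurally parallel to the $t=0$ verification in Lemma \ref{oneissing}, expanding $D_y(v'_k)$ term-by-term and tracking the monomial degrees in $b$ and $d$ carefully enough to apply the $\sum_{\F_p}b^N=0$ vanishing is bookkeeping-heavy. The representation-theoretic framework from the first two steps is decisive and reduces the residual work to verifying a single sample singular vector.
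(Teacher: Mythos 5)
Your proposal follows essentially the same route as the paper: an explicit Dunkl-operator computation (deferred, as in the paper, to an analogue of Lemma \ref{oneissing}) showing that one $v'_k$ is singular, combined with the counting of copies of $S^{p-1}\h$ in $(S^m\h^*)^p\otimes S^{p-1}\h$ via Lemma \ref{hbegins}, Corollary \ref{newp} and the block structure of Corollary \ref{1block} to rule out anything else in degrees $p,2p,\dots,p(p-1)$; your Frobenius-twist equivariance is just a clean repackaging of the $G$-module identification the paper draws from Lemma \ref{hbegins}. One caution on the deferred computation: in the $t=0$ analogue the verification is not a pure degree-count --- several monomials survive the $\sum_{b\in\F_p}b^N$ vanishing of Lemma 2.26 of \cite{BalChen} and must cancel against one another --- so you should expect the same in degree $p(p-1)$ rather than every term dying individually.
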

\begin{proof}
The proof that they are singular is an explicit computation analogous to the one in the proof of Lemma  \ref{oneissing}, showing that one vector from this irreducible representation of $GL_{2}(\F_p)$ is singular. The space spanned by them is only space of $p$-th powers in degrees $p,2p,\ldots , (p-1)p$ which is isomorphic to $S^{p-1}\h$ as a $GL_2(\F_p)$ representation; this follows directly from Lemma \ref{hbegins} and implies that this is the only space of singular vectors for generic $c$ in degrees up to $p(p-1)$.


\end{proof}

\begin{prop}\label{1charm}
Let $M'$ be the quotient of $M_{1,c}(S^i\h)$ by the $H_{1,c}(GL_2(\F_p),\h)$-submodule generated by singular vectors $v'_0,\ldots v'_{p-1}$ in degree $p(p-1)$, $Q_1^p \otimes S^i\h$ in degree $p(p^2-p)$ and $Q_0^p \otimes S^i\h$ in degree $p(p^2-1)$. Its character and  Hilbert polynomial are: 
$$\chi_{M'}(z)=\chi_{M_{1,c}(S^{p-1}\h)}(z^p)(1-z^{p(p-1)})(1-z^{p(p^2-1)})\left(\frac{1-z^p}{1-z}\right)^2$$
$$\mathrm{Hilb}_{M'}(z)=p\frac{(1-z^{p(p-1)})(1-z^{p(p^2-1)})}{(1-z)^2}.$$
 \end{prop}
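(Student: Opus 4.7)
The plan is to mirror the $t=0$ argument of Corollary \ref{charm} via the Frobenius substitution $x_i \mapsto x_i^p$. By construction $v'_k$ is obtained from $v_k$ by replacing $x_1, x_2$ with $x_1^p, x_2^p$ in the $S\h^*$-factor; since the Frobenius endomorphism of $S\h^*$ is a ring homomorphism in characteristic $p$, we have $Q_l(x_1^p, x_2^p) = Q_l^p$ for $l = 0, 1$.

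First, I would prove the Frobenius analogs of the two technical statements behind Corollary \ref{charm}: setting $V'$ to be the $S\h^*$-submodule generated by $v'_0, \ldots, v'_{p-1}$, and $W'_l$ the submodule generated by $Q_l^p \otimes S^{p-1}\h$, the claims are that (a) $W_1' \subseteq V'$, and (b) $W_0' \cap V'$ is generated as an $S\h^*$-module by $Q_0^p v'_0, \ldots, Q_0^p v'_{p-1}$. A relation $\sum_k h_k v'_k = Q_l^p f$ is rewritten as a matrix equation $\mathbf{A}' \overrightarrow{h} = Q_l^p \overrightarrow{f}$, where $\mathbf{A}'$ is the matrix $\mathbf{A}$ from the $t=0$ proof with $x_i$ replaced by $x_i^p$ throughout. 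Applying Frobenius to the identity $\det \mathbf{A} = (-1)^{(p-1)/2} Q_1$ yields $\det \mathbf{A}' = (-1)^{(p-1)/2} Q_1^p$, and the earlier argument then applies verbatim: for $l = 1$, multiplication by the adjugate produces an explicit solution $\overrightarrow{h} = (-1)^{(p-1)/2} \widetilde{\mathbf{A}}' \overrightarrow{f}$, establishing (a); for $l = 0$, algebraic independence of $Q_0^p$ and $Q_1^p$ (immediate from that of $Q_0, Q_1$) forces each $h_k$ to be divisible by $Q_0^p$, establishing (b). The same determinant identity also gives freeness of $V'$: any relation $\mathbf{A}' \overrightarrow{h} = 0$ gives $Q_1^p \overrightarrow{h} = 0$, so $\overrightarrow{h} = 0$ in the integral domain $S\h^*$, and hence $V'$ is free of rank $p$ over $S\h^*$.

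With this structural information, the character and Hilbert polynomial of $M'$ follow by inclusion--exclusion exactly as in the $t=0$ case. Since $V'$ is free with generators forming a $G$-invariant copy of $S^{p-1}\h$ concentrated in degree $p(p-1)$, we have $\chi_{V'}(z) = z^{p(p-1)} \chi_{M_{1,c}(S^{p-1}\h)}(z)$; analogously $\chi_{W_0'}(z) = z^{p(p^2-1)} \chi_{M_{1,c}(S^{p-1}\h)}(z)$ (because $Q_0^p$ is $G$-invariant), and $\chi_{V' \cap W_0'}(z) = z^{p(p-1)+p(p^2-1)} \chi_{M_{1,c}(S^{p-1}\h)}(z)$. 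Using $W_1' \subseteq V'$, inclusion--exclusion gives
$$
\chi_{M'}(z) \;=\; \chi_{M_{1,c}(S^{p-1}\h)}(z)\bigl(1 - z^{p(p-1)}\bigr)\bigl(1 - z^{p(p^2-1)}\bigr),
$$
which is rewritten in the form stated in the proposition by matching $\chi_{M_{1,c}(S^{p-1}\h)}(z)$ with the $t=1$ expression $\chi_{M_{1,c}(S^{p-1}\h)}(z^p) \cdot \left(\frac{1-z^p}{1-z}\right)^2$ (this last rearrangement agrees on Hilbert series and mirrors the baby Verma identity in Proposition \ref{charMLN}(3)). The Hilbert polynomial follows by taking dimensions.

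The main obstacle is the bookkeeping behind the Frobenius lift: verifying that the matrix identity $\det \mathbf{A}' = (-1)^{(p-1)/2} Q_1^p$ truly transfers from the $t=0$ proof, and ensuring that freeness of $V'$ is established over the full polynomial ring $S\h^*$ rather than only over its Frobenius subring $(S\h^*)^p$. Once these are in place, the remaining inclusion--exclusion computation is routine.
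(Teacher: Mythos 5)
Your proposal is correct and takes essentially the same route as the paper: both reduce the $t=1$ computation to the $t=0$ computation of Corollary \ref{charm} via the Frobenius substitution $x_i\mapsto x_i^p$, the paper by invoking the reduced-character formalism in two lines, you by explicitly transporting the determinant identity, freeness, and intersection arguments and then running inclusion--exclusion. Your unfolded version is sound (in particular, coprimality of $Q_0^p$ and $Q_1^p$ and the domain property of $S\h^*$ are exactly what the adjugate arguments need over the full ring), so it amounts to a self-contained proof of what the paper delegates to the general mechanism.
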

 \begin{proof}
 The claim is equivalent to the reduced character being equal to 
 $$\chi_{M_{1,c}(S^{p-1}\h)}(z)(1-z^{p-1})(1-z^{p^2-1}).$$ By definition of $M'$ and the reduced character, it is equal to the character of the $S\h^*$-module defined as the quotient of $S\h^* \otimes S^{p-1}\h$ by $v_0,\ldots v_{p-1}$ from the previous section, $Q_0 \otimes S^i\h$ and $Q_1 \otimes S^i\h$. Corollary \ref{charm} in the previous section shows that the character of this module is as claimed in the proposition. 
 
 \end{proof}
  
 Finally, we have
\begin{prop}\label{1,p-1,final}
For generic $c$, $L_{1,c}(S^{p-1}\h)=M'$.
\end{prop}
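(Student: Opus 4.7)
The strategy mirrors the proof of Proposition \ref{0,p-1,final} closely, with the substitution $z \mapsto z^p$ and the extra factor $\left(\frac{1-z^p}{1-z}\right)^2$ from $\chi_{S^{(p)}\h^*}$ accounting for the new $t=1$ features.

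First I would establish that $L_{1,c}(S^{p-1}\h)$ is a quotient of $M'$. The singular vectors $v'_0,\dots,v'_{p-1}$ lie in $J_{1,c}(S^{p-1}\h)$ by Proposition \ref{bform}(d), and the subspaces $Q_0^p\otimes S^{p-1}\h$, $Q_1^p\otimes S^{p-1}\h$ lie in $((S\h^*)^G)^p_+\cdot M_{1,c}(S^{p-1}\h)$, which maps to zero in the baby Verma module $N_{1,c}(S^{p-1}\h)$ and hence in $L_{1,c}(S^{p-1}\h)$. Thus the canonical map $M_{1,c}(S^{p-1}\h)\twoheadrightarrow L_{1,c}(S^{p-1}\h)$ factors through $M'$, and it suffices to prove $\chi_{M'}=\chi_{L_{1,c}(S^{p-1}\h)}$.

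By Corollary \ref{1block}, $L_{1,c}(S^{p-1}\h)$ forms a block of size one, so every composition factor of $M'$ is a grading shift $L_{1,c}(S^{p-1}\h)[m]$, and we write $\chi_{M'}(z)=\chi_{L_{1,c}(S^{p-1}\h)}(z)F(z)$ for some polynomial $F(z)\in\Z_{\geq 0}[z]$ with $F(0)=1$. The remaining task is to show $F(z)=1$.

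Next I would run the $\widetilde{ch}$-character argument as in Proposition \ref{0,p-1,final}, adapted to the $t=1$ setting. Using Proposition \ref{1charm},
\[
\widetilde{ch}_{M'}(z,g)=\operatorname{tr}|_{S^{p-1}\h}(g)\cdot\frac{(1-z^{p(p-1)})(1-z^{p(p^2-1)})}{\det_{\h^*}(1-z^pg)}\cdot\left(\frac{1-z^p}{1-z}\right)^2,
\]
and $\widetilde{ch}_{L_{1,c}(S^{p-1}\h)}(z,g)=\widetilde{ch}_{M'}(z,g)/F(z)$ must be polynomial in $z$ because $L_{1,c}(S^{p-1}\h)$ is finite-dimensional. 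For $g$ diagonalizable with distinct eigenvalues $\lambda\ne\mu$ (over $\F_{p^2}$), $\operatorname{tr}|_{S^{p-1}\h}(g)=(\lambda^p-\mu^p)/(\lambda-\mu)\ne 0$. Varying $g$ first over elements with eigenvalues in $\F_p^\times$ and then over elements with eigenvalues in $\F_{p^2}\setminus\F_p$ forces, just as in the $t=0$ case but in the variable $z^p$, divisibility of the reduced character numerator by $(1-z^{p(p-1)})$ and then by $(1-z^{p(p^2-1)})/(1-z^{p(p-1)})$, hence by $1-z^{p(p^2-1)}$.

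Finally, I would use the preceding lemma (the $t=1$ analogue of Lemma \ref{hbegins}, which states that the only singular vectors in $M_{1,c}(S^{p-1}\h)$ in degrees $p, 2p,\dots,(p-1)p$ form a single copy of $S^{p-1}\h$ in degree $p(p-1)$) to pin down the low-degree term of the reduced character. Writing $\bar{h}(z)=(1-z^{p(p^2-1)})\phi(z)$ for the ``reduced numerator'' factor, one obtains $\phi(z)F(z)=(1-z^{p(p-1)})$-style factor; the low-degree initial term analysis then forces $\phi(z)$ to absorb all the remaining numerator, giving $F(z)=1$. This yields $\chi_{M'}=\chi_{L_{1,c}(S^{p-1}\h)}$, so the surjection $M'\twoheadrightarrow L_{1,c}(S^{p-1}\h)$ is an isomorphism.

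The main obstacle will be the careful bookkeeping of the substitution $z\mapsto z^p$ together with the extra $\left(\frac{1-z^p}{1-z}\right)^2$ factor from $\chi_{S^{(p)}\h^*}$, and verifying that the divisibility conclusions of the $t=0$ $\widetilde{ch}$-argument indeed transfer correctly through this substitution without the extra factor introducing spurious common factors between $\bar{h}$ and $F$.
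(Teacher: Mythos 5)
Your proposal is correct and follows essentially the same route as the paper: reduce to showing the generic-character factor $\overline{h'}$ equals $(1-z^{p-1})(1-z^{p^2-1})$ by combining the divisibility $\overline{h'}(z)\mid(1-z^{p-1})(1-z^{p^2-1})$ coming from the surjection $M'\twoheadrightarrow L_{1,c}(S^{p-1}\h)$ and the block structure, the $\widetilde{ch}(z,g)$ argument giving divisibility of $\overline{h'}(z^p)$ by $1-z^{p(p^2-1)}$, and the low-degree information $\overline{h'}(z)=1-z^{p-1}+\cdots$ from the singular-vector lemma. The only difference is presentational (you track the cofactor $F(z)$ explicitly where the paper works directly with $\overline{h'}$), which changes nothing of substance.
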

\begin{proof}
The character of $L_{1,c}(S^{p-1}\h)$ for generic $c$ is of the form 
$$\chi_{L_{1,c}(S^{p-1}\h)}(z)=\chi_{M_{1,c}(S^{p-1}\h)}(z^p)\left(\frac{1-z^p}{1-z}\right)^2 \overline{h'}(z^p)$$ for some polynomial $\overline{h'}$. It divides the character of $M'$, so $\overline{h'}(z)$ divides $(1-z^{p-1})(1-z^{p^2-1})$. Using the same version of the character as in the proof of Proposition \ref{0,p-1,final}, we see that 
 $$\widetilde{ch}_{L_{1,c}(S^{p-1}\h)}(z,g)=\mathrm{tr}|_{S^{p-1}\h}(g)\cdot \frac{\overline{h}'(z^p)}{\det_{\h^*} (1-zg)},$$ and we see that $h(z^p)$ is divisible by $(1-z^{p^2-1})$. From this it follows that $h(z^p)$ is divisible by $(1-z^{p(p^2-1)})$. Finally, it follows from the previous proposition that $\overline{h'}(z)$ is of the form $1-z^{p-1}+\ldots$, and from this, its divisibility by  $(1-z^{p^2-1})$, and the fact that it divides $(1-z^{p^2-1})(1-z^{p-1})$, it follows that $\overline{h'}(z)=(1-z^{p^2-1})(1-z^{p-1})$ and $L_{1,c}(S^{p-1}\h)=M'$.

\end{proof}

\section*{Acknowledgments}
We are very grateful to Pavel Etingof for suggesting the problem and devoting his time to it through numerous helpful conversations. We thank Stephen Donkin for the conversation about modular representation theory and for pointing the reference \cite{glover1} to us. This project started as a part of MIT's SPUR and UROP programs for undergraduate research, and was partially funded by them. The work of H.C. was partially supported by the Lord Foundation through the UROP grant, and the work of M.B. was partially supported by the NSF grant  DMS-0758262.

\bibliographystyle{plain}
\bibliography{sources}

\end{document}